\crefname{thm}{theorem}{theorems}
\crefname{lem}{lemma}{lemmas}
\crefname{cor}{corollary}{corollaries}
\crefname{prop}{proposition}{propositions}
\crefname{defn}{definition}{definitions}
\crefname{eg}{example}{examples}
\crefname{xca}{exercise}{exercises}
\crefname{conj}{conjecture}{conjectures}
\crefname{rmk}{remark}{remarks}
\crefname{qst}{question}{questions}
\crefname{obs}{observation}{observations}
\newtheorem{thm}{Theorem}[section]
\newtheorem*{thm*}{Theorem} 
\newtheorem{lem}[thm]{Lemma}  
\newtheorem{prop}[thm]{Proposition}
\theoremstyle{definition}
\newtheorem{defn}[thm]{Definition}
\newtheorem{conj}[thm]{Conjecture}
\theoremstyle{remark}
\newtheorem{rmk}[thm]{Remark}
\newtheorem{obs}[thm]{Observation}
\newtheorem*{note}{Note} 
\numberwithin{equation}{section}
\DeclareMathAlphabet{\matholdcal}{OMS}{cmsy}{m}{n}
\newcommand{\ip}[2]{\langle #1 , #2 \rangle}    
\newcommand{\abs}[1]{\lvert#1\rvert}
\newcommand\diag{\operatorname{diag}}   
\newcommand\C{\mathbb C}    
\newcommand\R{\mathbb R}    
\newcommand\Z{\mathbb Z}    
\newcommand\style{\matholdcal}
\newcommand\tran{{}^t} 
\newcommand\st{\operatorname{St}} 
\newcommand\GL{\mathbf{GL}} 
\newcommand\gl{\mathfrak{gl}} 
\newcommand\SL{\mathrm{SL}} 
\newcommand\G{\mathbf{G}} 
\newcommand\Sp{\mathbf{Sp}} 
\newcommand\Hbf{\mathbf{H}} 
\newcommand\X{\mathbf{X}} 
\newcommand\Abf{\mathbf{A}} 
\newcommand\Mbf{\mathbf{M}} 
\newcommand\of{\mathcal O_F} 
\newcommand\wf{\matholdcal W_F} 
\newcommand\lf{\matholdcal L_F} 
\newcommand\Exp{\mathcal{Exp}} 
\newcommand\wt{\widetilde} 
\newcommand\ve{\varepsilon} 
\DeclareMathOperator{\spn}{span} 
\DeclareMathOperator{\id}{Id} 
\DeclareMathOperator{\Ad}{Ad}  
\DeclareMathOperator{\ind}{Ind} 
\DeclareMathOperator{\Hom}{Hom} 
\DeclareMathOperator{\Sym}{Sym} 
\begin{document}
\title{Speh representations are relatively discrete}
\author[J.~M.~Smith]{Jerrod Manford Smith}
\address{Department of Mathematics \& Statistics, University of Calgary, Calgary, Alberta, Canada, T2N 1N4}
\email{jerrod.smith@ucalgary.ca}
\urladdr{}
\thanks{}

\subjclass[2010]{Primary 22E50; Secondary 22E35}
\keywords{Relative discrete series, distinguished representation, symplectic group, Speh representation}
\date{\today}
\dedicatory{}
\begin{abstract}
Let $F$ be a $p$-adic field of characteristic zero and odd residual characteristic.
Let $\mathbf{Sp}_{2n}(F)$ denote the symplectic group defined over $F$, where $n\geq 2$.
We prove that the Speh representations $\matholdcal{U}(\delta,2)$, where $\delta$ is a discrete series representation of $\GL_n(F)$, lie in the discrete spectrum of the $p$-adic symmetric space $\mathbf{Sp}_{2n}(F) \backslash \mathbf{GL}_{2n}(F)$.  
\end{abstract}
\maketitle
\setcounter{tocdepth}{1}
\tableofcontents
\section{Introduction}
Let $F$ be a nonarchimedean local field of characteristic zero and odd residual characteristic $p$.
Let $G = \GL_{2n}(F)$ be an even rank general linear group and let $H = \Sp_{2n}(F)$ be the symplectic group.
This paper is concerned with the harmonic analysis on the $p$-adic symmetric space $X = H\backslash G$.
We prove that the Speh representations $\style{U}(\delta,2)$ appear in the discrete spectrum of $X$, as predicted by the conjectures of Sakellaridis and Venkatesh \cite{sakellaridis--venkatesh2017}.
Our main result, \Cref{thm-speh-rds}, is an unpublished result of Jacquet.  
We frame this result within the construction of (relative) discrete series representations for symmetric quotients of general linear groups carried out in \cite{smith2018,smith2018b}.
The present work relies on the substantial contributions of Heumos and Rallis \cite{heumos--rallis1990}, and Offen and Sayag \cite{offen--sayag2007,offen--sayag2008a,offen--sayag2008b} in the study of symplectic periods for the general linear group.

All representations are assumed to be on complex vector spaces.  In general, a smooth representation $(\pi,V)$ of $G$ is relevant to the harmonic analysis on $X=H\backslash G$ if and only if there exists a nonzero $H$-invariant linear form on the space $V$. If there exists a nonzero element $\lambda$ of $\Hom_H(\pi,1)$, then $(\pi,V)$ is $H$-distinguished.
Let $(\pi,V)$ be an irreducible admissible representation of $\GL_{2n}(F)$.
 Heumos and Rallis proved that the dimension of the space of $\Sp_{2n}(F)$-invariant linear forms on $V$ is at most one \cite[Theorem 2.4.2]{heumos--rallis1990}.
In addition, Heumos and Rallis showed that any irreducible admissible representation of $\GL_{2n}(F)$ cannot be both generic and $\Sp_{2n}(F)$-distinguished.
Recall that representation of $\GL_n(F)$ is generic if it admits a Whittaker model (see \cite{rodier1973} for more information on Whittaker models).

To see that an $H$-distinguished smooth representation $(\pi,V)$ of $G$ occurs in the space $C^\infty(X)$ of smooth (locally constant) functions on $X=H\backslash G$ one considers its relative matrix coefficients.
Let $\lambda \in \Hom_H(\pi,1)$ be nonzero.  For any $v\in V$, define a function $\varphi_{\lambda,v}$ by declaring that $\varphi_{\lambda,v}(Hg) = \ip{\lambda}{\pi(g)v}$.  The functions $\varphi_{\lambda,v}$ are smooth, since $\pi$ is smooth, and well-defined because $\lambda$ is $H$-invariant.  Moreover, the map that sends $v\in V$ to the $\lambda$-relative matrix coefficient $\varphi_{\lambda,v}$ intertwines $(\pi,V)$ and the right regular representation of $G$ on $C^\infty(X)$.
It is a fundamental problem to determine which irreducible representations of $G$ occur in the space $L^2(X)$ of square integrable functions on $X$.  The discrete spectrum $L^2_{\operatorname{disc}}(X)$ of $X$ is the direct sum of all irreducible $G$-subrepresentations of the space $L^2(X)$ of square integrable functions on $X$.
We prove, in \Cref{thm-speh-rds}, that the Speh representations $\style{U}(\delta,2)$ appear in $L^2_{\operatorname{disc}}(\Sp_{2n}(F)\backslash \GL_{2n}(F))$.
On the other hand, we do not prove that such representations are the only discrete series; we face the same obstacles discussed in \cite[Remark 6.6]{smith2018}.

Sakellaridis and Venkatesh have developed a framework ecnompassing the study of harmonic analysis on $p$-adic symmetric spaces and its deep connections with periods of automorphic forms and Langlands functoriality \cite{sakellaridis--venkatesh2017}.
In addition to providing explicit Plancherel formulas, Sakellaridis and Venkatesh have made precise conjectures describing the Arthur parameters of representations in the discrete series of symmetric spaces (and, more generally, spherical varieties) \cite[Conjectures 1.3.1 and 16.2.2]{sakellaridis--venkatesh2017}.
In fact, their conjectures predict that the discrete series of $\Sp_{2n}(F)\backslash \GL_{2n}(F)$ consists precisely of the Speh representations. 

We conclude the introduction with a summary of the contents of the paper.  In \Cref{sec-notation} we establish notation regarding $p$-adic symmetric spaces and representations; in addition, we review the Relative Casselman Criterion established by Kato and Takano \cite{kato--takano2010}.
We review the construction of the Speh representation in \Cref{sec-speh}.  In \Cref{sec-parameters}, we review the conjectures of Sakellaridis and Venkatesh and we demonstrate that their work predicts that the Speh representations $\style{U}(\delta,2)$ should appear in the discrete spectrum of $\Sp_{2n}(F)\backslash \GL_{2n}(F)$ (see \Cref{prop-sp-parameters}).
We determine the fine structure of the symmetric space $\Sp_{2n}(F)\backslash \GL_{2n}(F)$ in \Cref{sec-tori-pblc}; in particular, we realize the group $\Sp_{2n}(F)$ as the fixed points of an involution $\theta$ on $\GL_{2n}(F)$, determine the restricted root system and maximal $\theta$-split parabolic subgroups of $\GL_{2n}(F)$ relative to $\theta$. 
In \Cref{sec-rel-cass} we prove our main result, \Cref{thm-speh-rds}, by applying the Relative Casselman Criterion (see \Cref{thm-relative-casselman}).

In \Cref{sec-elliptic-context}, we make an effort to set the present work within the program started in \cite{smith2018,smith2018b}, where relative discrete series representations have been systematically constructed via parabolic induction from distinguished discrete series representations of $\theta$-elliptic Levi subgroups.
In fact, we realize the Speh representations as quotients of representations induced from distinguished discrete series of certain maximal $\theta$-elliptic Levi subgroups.
The present setting is complicated by the fact representations induced from discrete series are generic and therefore not distinguished by the symplectic group.
In particular, although we expect that the construction of relative discrete series carried out in \cite{smith2018,smith2018b} should generalize, some care must be taken to handle ``disjointness-of-models" phenomena as in the case of the Whittaker and symplectic models \cite[Theorem 3.2.2]{heumos--rallis1990}, and Klyachko models \cite{offen--sayag2008b}.

\subsection*{Acknowledgements}
The author would like to thank Omer Offen and Yiannis Sakellaridis for many helpful discussions.  The author also thanks the anonymous referee for several helpful suggestions.

\section{Notation and terminology}\label{sec-notation}
Let $F$ be a nonarchimedean local field of characteristic zero and odd residual characteristic $p$.
Let $\of$ be the ring of integers of $F$.
 Fix a uniformizer $\varpi$ of $F$. 
 Let $q$ be the cardinality of the residue field $k_F$ of $F$. 
 Let $|\cdot |_F$ denote the normalized absolute value on $F$ such that $|\varpi|_F = q^{-1}$.  We reserve the notation $|\cdot|$ for the usual absolute value on $\C$.
 
\subsection{Reductive groups and $p$-adic symmetric spaces}
 Let $\G$ be a connected reductive group defined over $F$.  
Let $\theta$ be an $F$-involution of $\G$.  
Let $\Hbf = \G^\theta$ be the subgroup of $\theta$-fixed points in $\G$.
Write $G = \G(F)$ for the group of $F$-points of $\G$.  
The quotient $H\backslash G$ is a $p$-adic symmetric space.
We will routinely abuse notation and identify an algebraic $F$-variety $\X$ with its $F$-points $X = \X(F)$.
When the distinction is to be made, we will use boldface to denote the algebraic variety and regular typeface for the set of $F$-points.

For an $F$-torus $\mathbf A \subset G$, let $A^1$ be the subgroup $\mathbf A(\of)$ of $\of$-points of $A = \mathbf A(F)$.
We use $Z_G$ to denote the centre of $G$ and $A_G$ to denote the $F$-split component of the centre of $G$.
Let $X^*(G) = X^*(\G)$ denote the group of $F$-rational characters of the algebraic group $\G$.
If $Y$ is a subset of a group $G$, then let $N_G(Y)$ denote the normalizer of $Y$ in $G$ and let $C_G(Y)$ denote the centralizer of $Y$ in $G$.
\subsubsection{Tori and root systems relative to involutions}\label{sec-tori-involution}
An element $g \in G$ is $\theta$-split if $\theta(g) = g^{-1}$.  An $F$-torus $S$ contained in $G$ is $(\theta,F)$-split if $S$ is $F$-split and every element of $S$ is $\theta$-split. 
 
Let $S_0$ be a maximal $(\theta,F)$-split torus of $G$.  Let $ A_0$ be a $\theta$-stable maximal $F$-split torus of $G$ that contains $S_0$ \cite[Lemma 4.5(iii)]{helminck--wang1993}. 
Let $\Phi_0 = \Phi(G,A_0)$ be the root system of $G$ with respect to $A_0$.
Let $W_0 = W(G, A_0) = N_{G}(A_0) / C_{G}(A_0)$ be the Weyl group of $G$ with respect to $A_0$.

The torus $A_0$ is $\theta$-stable, so there is an action of $\theta$ on the $F$-rational characters $X^*(A_0)$; moreover, $\Phi_0$ is a $\theta$-stable subset of $X^*(A_0)$.
Recall that a base of $\Phi_0$ determines a choice of positive roots $\Phi_0^+$.

\begin{defn}\label{defn-theta-base}
A base $\Delta_0$ of $\Phi_0$ is called a $\theta$-base if for every positive root $\alpha \in \Phi_0^+$ such that $\theta(\alpha) \neq \alpha$ we have that $\theta(\alpha) \in \Phi_0^-=-\Phi_0^+$.   
\end{defn}

Let $r: X^*(A_0) \rightarrow X^*(S_0)$ be the surjective map defined by restriction of ($F$-rational) characters.
Define $\overline \Phi_0 = r(\Phi_0) \setminus \{0\}$ and $\overline \Delta_0 = r(\Delta_0) \setminus \{0\}$.
The set $\overline \Phi_0$ coincides with $\Phi_0(G, S_0)$ and is referred to the as the restricted root system of $G/H$ \cite[Proposition 5.9]{helminck--wang1993}.
The set $\overline \Delta_0$ is a base of the root system $\overline \Phi_0$.  Note that $\overline \Phi_0$ is not necessarily reduced.
Let $\Phi_0^\theta$ and $\Delta_0^\theta$ be the subsets of $\theta$-fixed roots in $\Phi_0$, respectively $\Delta_0$.
Observe that $\overline \Phi_0 = r(\Phi_0 \setminus \Phi_0^\theta)$ and $\overline \Delta_0 = r(\Delta_0 \setminus \Delta_0^\theta)$.

Let $\overline \Theta$ be a subset of $\overline \Delta_0$.  Set $[\overline \Theta] = r^{-1}(\overline \Theta) \cup \Delta_0^\theta$.  Subsets of $\Delta_0$ of the form $[\overline \Theta]$ are called $\theta$-split.  Maximal $\theta$-split subsets of $\Delta_0$ are of the form $[\overline\Delta_0 \setminus \{\bar \alpha \}]$, where $\bar \alpha \in \overline \Delta_0$.

\subsubsection{Parabolic subgroups relative to involutions}\label{sec-pblc-involution}
Let $\mathbf{P}$ be an $F$-parabolic subgroup of $\G$.  We refer to an $F$-parabolic subgroup of $\G$ simply as a parabolic subgroup.
Let $\mathbf{N}$ be the unipotent radical of $\mathbf P$.  The reductive quotient $\mathbf M \cong \mathbf P / \mathbf N$ is called a Levi factor of $\mathbf P$.
We denote by $\delta_P$ the modular character of $P = \mathbf P(F)$ given by $\delta_P(p) = |\det \Ad_{\mathfrak n} (p) |_F$, where $\mathfrak n$ is the Lie algebra of $\mathbf N$.

Let $M$ be a Levi subgroup of $G$.  Let $A_M$ denote the $F$-split component of the centre of $M$.  The $(\theta,F)$-split component of $M$, denoted by $S_M$, is the largest $(\theta,F)$-split torus of $M$ that is contained in $A_M$.  
More precisely,
\begin{align*}
 S_M & = \left( \{ a \in A_M : \theta(a) = a^{-1} \} \right)^\circ,	
\end{align*}
where $(\cdot)^\circ$ denotes the Zariski-connected component of the identity.

\begin{defn}\label{defn-theta-split-pblc}
	A parabolic subgroup $P$ of $G$ is $\theta$-split if $\theta(P)$ is opposite to $P$, in which case $M = P \cap \theta(P)$ is a $\theta$-stable Levi subgroup of $P$.
\end{defn}

If $\Theta \subset \Delta_0$ is $\theta$-split, then the $\Delta_0$-standard parabolic subgroup $P_\Theta$ is $\theta$-split.
Let $\Phi_\Theta$ be the subsystem of $\Phi_0$ generated by $\Theta$.
The standard parabolic subgroup $P_\Theta$ has unipotent radical $N_\Theta$ generated by the root subgroups $N_\alpha$, where $\alpha \in \Phi_0^+ \setminus \Phi_\Theta^+$.
The standard Levi subgroup of $P_\Theta$ is $M_\Theta$, which is the centralizer in $G$ of the $F$-split torus $A_\Theta = \left( \bigcap_{\alpha \in \Theta} \ker \alpha \right)^\circ$.
Any $\Delta_0$-standard $\theta$-split parabolic subgroup of $G$ arises from a $\theta$-split subset of $\Delta_0$ \cite[Lemma 2.5(1)]{kato--takano2008}.

Let $\Theta \subset \Delta_0$ be $\theta$-split.
The $(\theta,F)$-split component of $M_\Theta$ is equal to
\begin{align*}
	S_\Theta &= \left( \bigcap_{\bar \alpha \in r(\Theta)} \ker (\bar\alpha : S_0 \rightarrow F^\times) \right)^\circ
\end{align*}
For any $0 < \epsilon \leq 1$, define 
\begin{align}\label{eq-split-dominant-part}
S_\Theta ^-(\epsilon) = \{ s \in S_\Theta  : |\alpha(s)|_F \leq \epsilon, \ \text{for all} \ \alpha \in \Delta_0 \setminus \Theta \}.
\end{align}
We write $S_\Theta ^-$ for $S_\Theta ^-(1)$ and refer to $S_\Theta ^-$ as the dominant part of $S_\Theta$.

By \cite[Theorem 2.9]{helminck--helminck1998}, the $\theta$-split subset $\Delta_0^\theta$ determines the standard minimal $\theta$-split parabolic subgroup $P_0 = P_{\Delta_0^\theta}$.
Let $N_0$ be the unipotent radical of $P_0$.  The standard Levi subgroup $M_0$ of $P_0$ is the centralizer in $G$ of the maximal $(\theta,F)$-split torus $S_0$.

\begin{lem}[{\cite[Lemma 2.5]{kato--takano2008}}]\label{KT08-lem-2.5}
Let $S_0 \subset A_0$, $\Delta_0$, and $P_0 = M_0N_0$ be as above.
\begin{enumerate}
\item Any $\theta$-split parabolic subgroup $P$ of $G$ is conjugate to a $\Delta_0$-standard $\theta$-split parabolic subgroup by an element $g \in (\Hbf \mathbf M_0)(F)$.
\item If the group of $F$-points of the product $(\Hbf \mathbf M_0)(F)$ is equal to $HM_0$, then any $\theta$-split parabolic subgroup of $G$ is $H$-conjugate to a $\Delta_0$-standard $\theta$-split parabolic subgroup.
\end{enumerate}
\end{lem}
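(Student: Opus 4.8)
The plan is to establish part~(1) by a two-step conjugation and to deduce part~(2) from it in a few lines; throughout, ``$P$ is conjugate to $Q$ by $g$'' will mean $g^{-1}Pg=Q$.

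For part~(1), I would first move $P$ into a parabolic that contains the fixed maximal $(\theta,F)$-split torus $S_0$. Let $P$ be $\theta$-split, let $M=P\cap\theta(P)$ be its $\theta$-stable Levi factor (\Cref{defn-theta-split-pblc}), and let $S_M$ be the $(\theta,F)$-split component of $M$. The relevant structural fact is that $M=C_G(S_M)$: since $P$ is $\theta$-split, $\theta$ carries the unipotent radical of $P$ onto the unipotent radical of the opposite parabolic, which meets the former trivially, so no $A_M$-root space of $\operatorname{Lie} P$ lying outside $\operatorname{Lie} M$ is $\theta$-stable, and hence $S_M$ has no nonzero fixed vectors on any such root space. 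Consequently any maximal $(\theta,F)$-split torus $S_1$ of $G$ with $S_M\subseteq S_1$ satisfies $S_1\subseteq C_G(S_M)=M\subseteq P$. By the $F$-rational conjugacy of maximal $(\theta,F)$-split tori of $G$ \cite{helminck--wang1993}, there is $h_1\in H$ with $h_1^{-1}S_1h_1=S_0$, so $Q:=h_1^{-1}Ph_1$ is again $\theta$-split (as $\theta(h_1)=h_1$) and contains $S_0$.

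For the second step I would use that the $\theta$-split parabolic subgroups of $G$ containing $S_0$ are parametrized by the parabolic subsets of the restricted root system $\overline\Phi_0$ --- via $Q\mapsto\{\bar\alpha\in\overline\Phi_0:\mathfrak g_{\bar\alpha}\subseteq\operatorname{Lie} Q\}$ --- the $\Delta_0$-standard ones corresponding exactly to the parabolic subsets that contain the positive system $\overline\Phi_0^+$ determined by $\overline\Delta_0$ \cite{helminck--helminck1998}. Since every parabolic subset of a root system is carried by its Weyl group to one containing a prescribed positive system, there is an element $\bar w$ of the relative Weyl group $W(\overline\Phi_0)$ carrying $Q$ to a $\Delta_0$-standard $\theta$-split parabolic $P_\Theta$, for a $\theta$-split $\Theta\subseteq\Delta_0$. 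By \cite{helminck--wang1993}, $\bar w$ has a representative $n\in N_G(S_0)=N_{\G}(\mathbf S_0)(F)$, and moreover $N_{\G}(\mathbf S_0)=N_{\Hbf}(\mathbf S_0)\,\Mbf_0$ (every relative Weyl class being realized in $N_{\Hbf}(\mathbf S_0)$); hence $\Hbf\,N_{\G}(\mathbf S_0)=\Hbf(\,N_{\Hbf}(\mathbf S_0)\,\Mbf_0)=\Hbf\Mbf_0$ as subvarieties of $\G$. Now $g:=h_1n$ satisfies $g^{-1}Pg=n^{-1}(h_1^{-1}Ph_1)n=n^{-1}Qn=P_\Theta$, while $g=h_1n\in\Hbf(F)\,N_{\G}(\mathbf S_0)(F)\subseteq(\Hbf N_{\G}(\mathbf S_0))(F)=(\Hbf\Mbf_0)(F)$. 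This proves part~(1).

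For part~(2), suppose $(\Hbf\Mbf_0)(F)=HM_0$ and let $P$ be $\theta$-split. By part~(1) there is $g\in(\Hbf\Mbf_0)(F)=HM_0$ with $g^{-1}Pg=P_\Theta$ for some $\theta$-split $\Theta\subseteq\Delta_0$; write $g=hm$ with $h\in H$ and $m\in M_0$. Since $\Theta$ is $\theta$-split, $\Delta_0^\theta\subseteq\Theta$, so $M_0\subseteq P_0=P_{\Delta_0^\theta}\subseteq P_\Theta$ and conjugation by $m$ fixes $P_\Theta$; then $P_\Theta=g^{-1}Pg=m^{-1}(h^{-1}Ph)m$ gives $h^{-1}Ph=mP_\Theta m^{-1}=P_\Theta$, so $P$ is $H$-conjugate to $P_\Theta$. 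I expect the genuine difficulty to lie entirely in the $F$-rationality inputs taken from \cite{helminck--wang1993} --- the $F$-conjugacy of maximal $(\theta,F)$-split tori, the rationality of the relative Weyl group ($\bar w$ realized in $N_G(S_0)$), and the geometric identity $N_{\G}(\mathbf S_0)=N_{\Hbf}(\mathbf S_0)\Mbf_0$ --- whereas the combinatorics of parabolic subsets of $\overline\Phi_0$, the identity $M=C_G(S_M)$ for $\theta$-split parabolics, and the passage from~(1) to~(2) are routine; the hypothesis in~(2) records exactly how far the $F$-points $(\Hbf\Mbf_0)(F)$ of the variety $\Hbf\Mbf_0$ can exceed the product $HM_0$ of the groups of $F$-points.
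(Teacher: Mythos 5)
There is a genuine gap, and it sits exactly at the heart of the lemma. Your first step asserts ``by the $F$-rational conjugacy of maximal $(\theta,F)$-split tori of $G$ \cite{helminck--wang1993}, there is $h_1\in H$ with $h_1^{-1}S_1h_1=S_0$.'' Helminck--Wang prove no such thing, and the statement is false in general: maximal $(\theta,F)$-split tori fall into finitely many, but typically several, $H$-conjugacy classes over a $p$-adic field. (Already for $G=\SL_2$ with $\theta=\Int(\diag(1,-1))$, so that $H$ is the diagonal torus, the maximal $(\theta,F)$-split tori are the tori $T_c=\{\left(\begin{smallmatrix} a & cb\\ b & a\end{smallmatrix}\right)\}$ with $c\in(F^\times)^2$, and $H$-conjugation only moves $c$ within its class modulo $(F^\times)^4$, so there are several $H$-classes.) This rationality failure is precisely why the conclusion of part (1) only asserts conjugacy by an element of $(\Hbf\Mbf_0)(F)$ --- the $F$-points of the product variety, which can strictly contain $HM_0$ --- and why part (2) needs the hypothesis $(\Hbf\Mbf_0)(F)=HM_0$ at all. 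An argument that begins by conjugating the ambient maximal $(\theta,F)$-split torus into $S_0$ by an element of $H$ proves too much and cannot be repaired by citation; the actual proof of Kato--Takano (which the paper simply quotes, giving no proof of its own) is built around controlling exactly this defect, conjugating only within $F$-points of $\Hbf\Mbf_0$.

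A secondary soft spot: you need the relative Weyl element $\bar w$ to admit a representative $n\in N_G(S_0)$ rational over $F$, and you need $h_1n$ to land in $(\Hbf\Mbf_0)(F)$. The identity $N_{\G}(\mathbf S_0)=N_{\Hbf}(\mathbf S_0)\Mbf_0$ is fine as an identity of varieties (i.e.\ over $\bar F$), but the existence of $F$-rational Weyl representatives for the restricted root system of a symmetric space is itself a nontrivial rationality statement that you invoke with only a vague pointer to \cite{helminck--wang1993}; it is not on the same footing as the Borel--Tits statement for relative Weyl groups of maximal $F$-split tori. By contrast, the parts of your argument that are genuinely routine are correct: $M=C_G(S_M)$ for a $\theta$-split parabolic, the parametrization of $\theta$-split parabolics containing $S_0$ by parabolic subsets of $\overline\Phi_0$, and the deduction of (2) from (1) (using that $M_0\subset P_0\subset P_\Theta$ and that parabolics are self-normalizing) --- the last of these matches what one would do given (1). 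The missing content is the rational conjugation step, which is the actual substance of the cited lemma.
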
 

Let $P = MN$ be a $\theta$-split parabolic subgroup. Pick $g\in (\Hbf \mathbf M_0)(F)$ such that $P = gP_\Theta  g^{-1}$ for some $\theta$-split subset $\Theta \subset \Delta_0$.  
Since $g\in (\Hbf \mathbf M_0)(F)$ we have that $g^{-1}\theta(g) \in \mathbf M_0(F)$, and
we have $S_M = g S_\Theta g^{-1}$.
For a given $\epsilon >0$, one may extend the definition of $S_\Theta ^-$ in \eqref{eq-split-dominant-part}  to the torus $S_M$. 
Set $S_M^-(\epsilon) = g S_\Theta ^-(\epsilon) g^{-1}$ 
and define $S_M^- = S_M^-(1)$. 
Write $S_M^1$ to denote the group of $\of$-points $S_M(\of)$.

\subsection{Distinguished representations and relative matrix coefficients}
A representation $(\pi,V)$ of $G$ is smooth if for every $v\in V$ the stabilizer of $v$ in $G$ is an open subgroup.  
A smooth representation $(\pi,V)$ of $G$ is admissible if, for every compact open subgroup $K$ of $G$, the subspace $V^K$ of $K$-invariant vectors is finite dimensional. 
All of the representations that we consider are smooth and admissible. 
A quasi-character of $G$ is a one-dimensional representation.  
Let $(\pi,V)$ be a smooth representation of $G$. If $\omega$ is a quasi-character of $Z_G$, then $(\pi,V)$ is called an $\omega$-representation if $\pi$ has central character $\omega$.  

Let $P$ be a parabolic subgroup of $G$ with Levi subgroup $M$ and unipotent radical $N$.  
 Given a smooth representation $(\rho, V_\rho)$ of $M$ we may inflate $\rho$ to a representation of $P$, also denoted $\rho$, by declaring that $N$ acts trivially.
 We define the representation $\iota_P^G \rho$ of $G$ to be the (normalized) parabolically induced representation $\ind_P^G (\delta_P^{1/2} \otimes \rho)$. 
We will also use the Bernstein--Zelevinsky \cite{bernstein--zelevinsky1977,zelevinsky1980} notation $\pi_1 \times \ldots \times \pi_{k}$ for the (normalized) parabolically induced representation $\iota_{P_{(m_1,\ldots,m_k)}}^{\GL_m(F)}(\pi_1\otimes \ldots \otimes \pi_k)$ of $\GL_m(F)$ obtained from the standard (block-upper triangular) parabolic subgroup $P_{(m_1,\ldots,m_k)}$ and representations $\pi_{j}$ of $\GL_{m_j}(F)$, where $\sum_{j=1}^k m_j = m$.

Let $(\pi,V)$ be a smooth representation of $G$.  Let $(\pi_N, V_N)$ denote the normalized Jacquet module of $\pi$ along $P$.
Precisely,  $V_N$ is the quotient of $V$ by the $P$-stable subspace $V(N) = \spn\{ \pi(n)v - v : n\in N , v\in V\}$, and the action of $P$ on $V_N$ is normalized by $\delta_P^{-1/2}$. 
The unipotent radical $N$ of $P$ acts trivially on $(\pi_N,V_N)$ and we will regard $(\pi_N, V_N)$ as a representation of the Levi factor $M \cong P/ N$ of $P$.

Let $\pi$ be a smooth representation of $G$.  We also let $\pi$ denote its restriction to $H$.   Let $\chi$ be a quasi-character of $H$.  
\begin{defn}\label{defn-dist}
The representation $\pi$ is $(H,\chi)$-distinguished if the space $\Hom_H(\pi,\chi)$ is nonzero.  If $\pi$ is $(H,1)$-distinguished, where $1$ is the trivial character of $H$, then we will simply call $\pi$ $H$-distinguished.
\end{defn}

Let $(\pi,V)$ be a smooth $H$-distinguished $\omega$-representation of $G$.
Note that $\omega$ must be trivial on $Z_G\cap H$.
Let $\lambda \in \Hom_H(\pi,1)$ be a nonzero $H$-invariant linear functional on $V$.
Let $v\in V$ be a nonzero vector.  Define the $\lambda$-relative matrix coefficient associated to $v$ to be the complex valued function $\varphi_{\lambda,v}: G \rightarrow \C$ given by $\varphi_{\lambda,v}(g) = \ip{\lambda}{\pi(g)v}$.  When $\lambda$ is understood, we drop it from the terminology and refer to the relative matrix coefficients of $\pi$.
Since $(\pi,V)$ is assumed to be smooth, for all $v \in V$, the function $\varphi_{\lambda,v}$ lies in the space $C^\infty(G)$ of smooth (that is, locally constant) $\C$-valued functions on $G$.
Moreover, since $\pi$ is an $\omega$-representation, the functions $\varphi_{\lambda,v}$ lie in the subspace $C_\omega^\infty(G)$ consisting of smooth functions $f:G\rightarrow \C$ such that $f(zg) = \omega(z)f(g)$, for all $z\in Z_G$ and $g\in G$.
Observe that, since $\lambda$ is $H$-invariant, for all $g\in G, z\in Z_G$, and $h\in H$ we have
\begin{align*}
\varphi_{\lambda,v}(hzg) &= \ip{\lambda}{\pi(hzg)v} \\
& = \omega(z)\ip{\lambda}{\pi(g)v} \\
& = \omega(z)\varphi_{\lambda,v}(g).
\end{align*}
For any $v\in V$, the $\lambda$-relative matrix coefficient $\varphi_{\lambda,v}$ descends to well a defined function on $H\backslash G$ and satisfies $\varphi_{\lambda,v}(Hzg) = \omega(z) \varphi_{\lambda,v}(Hg)$, for $z\in Z_G$ and $Hg \in H\backslash G$.
Further assume that the central character $\omega$ of $(\pi,V)$ is unitary.
In this case, the function $Z_G H \cdot g \rightarrow \abs{\varphi_{\lambda,v}(g)}$ is well defined on $Z_G H \backslash G$.
The centre $Z_G$ of $G$ is unimodular since it is abelian.  
The fixed point subgroup $H$ is also reductive and thus unimodular. 
By \cite[Proposition 12.8]{Robert-book}, there exists a $G$-invariant measure on the quotient $Z_GH \backslash G$.

\begin{defn}
Let $\omega$ be a unitary character of $Z_G$.  Let $(\pi,V)$ be an $H$-distinguished $\omega$-representation of $G$.
Then $(\pi,V)$ is said to be
\begin{enumerate}
\item $(H,\lambda)$-relatively square integrable if and only if all of the $\lambda$-relative matrix coefficients are square integrable modulo $Z_GH$.  
\item $H$-relatively square integrable if and only if $\pi$ is $(H,\lambda)$-relatively square integrable for every $\lambda \in \Hom_H(\pi,1)$.
\end{enumerate}
\end{defn}

\subsection{Exponents and the Relative Casselman Criterion}
 Let $(\pi,V)$ be a finitely generated admissible representation of $G$.  
 Let $\Exp_{Z_G}(\pi)$ be the (finite) set of quasi-characters of $Z_G$ that occur as the central characters of the irreducible subquotients of $\pi$.  
 We refer to the characters that appear in $\Exp_{Z_G}(\pi)$ as the exponents of $\pi$.
 By \cite[Proposition 2.1.9]{Casselman-book}, the quasi-character $\chi:Z_G\rightarrow \C^\times$ occurs in $\Exp_{Z_G}(\pi)$ if and only if the generalized $\chi$-eigenspace for the action of $Z_G$ on $V$ is nonzero.
 Let $Z$ be a closed subgroup of $Z_G$.  The exponents $\Exp_{Z}(\pi)$ with respect to the action of $Z$ on $V$ are defined analogously.  
 If $Z_1 \supset Z_2$ are two closed subsets of $Z_G$, then the map $\Exp_{Z_1}(\pi) \rightarrow \Exp_{Z_2}(\pi)$ defined by restriction of quasi-characters is surjective (see, for instance, \cite[Lemma 4.15]{smith2018}).
 
 Let $P=MN$ be a parabolic subgroup of $G$ with unipotent radical $N$ and Levi factor $M$.  The normalized Jacquet module $(\pi_N,V_N)$ of $(\pi,V)$ along $P$ is also finitely generated and admissible \cite[Theorem 3.3.1]{Casselman-book}.  The set $\Exp_{A_M}(\pi_N)$ of exponents of $\pi_N$, with respect to the action of the $F$-split component $A_M$ of $M$, is referred to as the set of exponents of $\pi$ along $P$.
 \begin{lem}[{\cite[Lemma 4.16]{smith2018}}]\label{lem-induced-exp}
 Let $P=MN$ be a parabolic subgroup of $G$, let $(\rho,W)$ be a finitely generated admissible representation of $M$, and let $\pi = \iota_P^G\rho$. The exponents $\Exp_{A_G}(\pi)$ of $\pi$ are the restrictions to $A_G$ of the exponents $\Exp_{A_M}(\rho)$ of $\rho$.
 \end{lem}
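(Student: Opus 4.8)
The plan is to compare the action of the split central torus $A_G$ on the induced representation $\iota_P^G\rho$ directly with its action on $\rho$, and then to pass from $A_M$-exponents to $A_G$-exponents by restriction of quasi-characters. The two ingredients are the characterization of exponents via generalized eigenspaces (cited above from \cite{Casselman-book}) and the surjectivity of the restriction map on exponent sets (cited above from \cite[Lemma 4.15]{smith2018}).

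First I would record two normalization facts. Since $Z_G\subseteq M$ and $Z_G$ centralizes all of $G$, the split central torus $A_G$ of $G$ is a split central torus of $M$, hence $A_G\subseteq A_M$. Moreover, as $A_G$ is central in $G$, the adjoint action $\Ad_{\mathfrak n}(z)$ is trivial for every $z\in A_G$, so $\delta_P(z)=1$. Consequently, for $f$ in the space of $\pi=\iota_P^G\rho=\ind_P^G(\delta_P^{1/2}\otimes\rho)$ and $z\in A_G$ one computes $(\pi(z)f)(g)=f(gz)=f(zg)=\delta_P^{1/2}(z)\,\rho(z)\bigl(f(g)\bigr)=\rho(z)\bigl(f(g)\bigr)$; that is, $\pi(z)$ acts on the space of $\pi$ by pointwise application of the operator $\rho(z)$.

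Next I would exploit that $A_G$ is central in $M$: the generalized $A_G$-eigenspace decomposition of $(\rho,W)$ is then a decomposition into $M$-subrepresentations, $\rho=\bigoplus_{\chi\in\Exp_{A_G}(\rho)}\rho_\chi$, where $A_G$ acts on $\rho_\chi$ with generalized eigencharacter $\chi$. Applying the exact additive functor $\iota_P^G$ gives $\pi=\bigoplus_\chi \iota_P^G\rho_\chi$, with $\iota_P^G\rho_\chi\neq 0$ whenever $\rho_\chi\neq 0$ (normalized parabolic induction is nonzero on nonzero objects, by constructing a nonzero smooth section via the Iwasawa decomposition). By the computation of the previous paragraph, $A_G$ acts on $\iota_P^G\rho_\chi$ with generalized eigencharacter $\chi$, so invoking the generalized-eigenspace characterization of exponents yields $\Exp_{A_G}(\pi)=\Exp_{A_G}(\rho)$.

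Finally I would pass from $A_M$ to $A_G$: since $A_G\subseteq A_M$, the restriction-of-quasi-characters map $\Exp_{A_M}(\rho)\rightarrow\Exp_{A_G}(\rho)$ is surjective (applying \cite[Lemma 4.15]{smith2018} inside $M$), so $\Exp_{A_G}(\rho)$ is precisely the set of restrictions to $A_G$ of the characters in $\Exp_{A_M}(\rho)$. Combining this with $\Exp_{A_G}(\pi)=\Exp_{A_G}(\rho)$ gives the claim. No serious obstacle arises in this argument; the only points that require care are the normalization check $\delta_P|_{A_G}=1$ and the observation that the $A_G$-eigenspace decomposition of $\rho$ consists of genuine $M$-subrepresentations, so that it is respected by normalized parabolic induction.
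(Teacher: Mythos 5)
Your argument is correct. Note that the paper does not prove this lemma at all --- it imports it by citation from \cite[Lemma 4.16]{smith2018} --- so there is no in-paper proof to compare against; your proof is the standard one and matches the spirit of the cited source: the key points are exactly $\delta_P\vert_{A_G}=1$, the fact that normalized induction of a nonzero representation is nonzero, and passage from $A_M$ to $A_G$ by restriction of exponents. The only step you gloss over is why $A_G$ acts on $\iota_P^G\rho_\chi$ with \emph{generalized} eigencharacter $\chi$, i.e.\ why a single power of $(\pi(z)-\chi(z))$ kills a given section $f$ uniformly in $z\in A_G$: this follows because $f$ is determined by its restriction to a maximal compact $K$ (Iwasawa decomposition), where it takes only finitely many values, and the operators $(\rho_\chi(z)-\chi(z))^n$ commute with the $P$-action on the inducing space; alternatively one can argue via finite length of $\rho$ (finitely generated plus admissible) and a composition series, which is the equivalent filtration-based form of your eigenspace decomposition.
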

 
 Let $(\pi,V)$ be a finitely generated admissible $H$-distinguished representation of $G$.  Let $\lambda \in \Hom_H(\pi,1)$ be a nonzero $H$-invariant linear form on $(\pi,V)$. In \cite{kato--takano2010}, Kato and Takano defined 
 \begin{align*}
 \Exp_{Z}(\pi,\lambda) & = \{ \chi \in \Exp_{Z}(\pi): \lambda\vert_{V_\chi} \neq 0\},	
 \end{align*}
 for any closed subgroup $Z$ of $Z_G$, where 
 \begin{align*}
 	V_\chi = \bigcup_{n=1}^\infty \{v \in V: (\pi(z)-\chi(z))^nv = 0, \forall z \in Z\}
 \end{align*}
is the generalized $\chi$-eigenspace for the $Z$ action on $V$.
 The elements of $\Exp_{A_G}(\pi,\lambda)$ are referred to as the exponents of $\pi$ relative to $\lambda$.
 Let $P$ be a $\theta$-split parabolic subgroup of $G$ with unipotent radical $N$ and $\theta$-stable Levi subgroup $M=P \cap \theta(P)$.
 Using Casselman's Canonical Lifting \cite[Proposition 4.1.4]{Casselman-book}, Kato--Takano \cite{kato--takano2008} and Lagier \cite{lagier2008} defined an $M^\theta$-invariant linear functional $\lambda_N \in \Hom_{M^\theta}(\pi_N,1)$, canonically associated to $\lambda$, on the Jacquet module $\pi_N$ of $\pi$ along $P$.
 We refer the reader to \cite[Proposition 5.6]{kato--takano2008} for details of the construction and additional properties of the map $\lambda \mapsto \lambda_N$.
We may now state the Relative Casselman Criterion \cite[Theorem 4.7]{kato--takano2010}.

\begin{thm}[Relative Casselman Criterion]\label{thm-relative-casselman}
	Let $\omega$ be a unitary character of $Z_G$. Let $(\pi,V)$ be a finitely generated admissible $H$-distinguished $\omega$-representation of $G$.  Fix a nonzero element $\lambda$ in $\Hom_H(\pi,V)$. 
	The representation $(\pi,V)$ is $(H,\lambda)$-relatively square integrable if and only if the condition
	\begin{align}\label{eq-rel-cass-condition}
		|\chi(s)|&<1, & \forall \chi \in \Exp_{S_M}(\pi_N,\lambda_N), \forall s \in S_M^-\setminus S_GS_M^1	
	\end{align}
	is satisfied for every proper $\theta$-split parabolic subgroup $P=MN$ of $G$.
\end{thm}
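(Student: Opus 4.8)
The strategy follows Casselman's classical square-integrability criterion. One evaluates $\int_{Z_GH\backslash G}\abs{\varphi_{\lambda,v}(g)}^2\,dg$ by reducing it, via a polar (Cartan-type) decomposition of the symmetric space, to a finite sum of contributions indexed by the $\theta$-split parabolic subgroups of $G$, and one controls each contribution through the behaviour of $\varphi_{\lambda,v}$ ``at infinity'', which is governed by a relative constant term of $\pi$. Two inputs carry the argument: a Cartan-type decomposition of $Z_GH\backslash G$ attached to the $(\theta,F)$-split tori, together with the volume growth of its cells; and the relative constant-term theory of Kato--Takano \cite{kato--takano2008} and Lagier \cite{lagier2008}, which produces $\lambda_N\in\Hom_{M^\theta}(\pi_N,1)$ from $\lambda$ by Casselman's canonical lifting \cite[Proposition 4.1.4]{Casselman-book} and shows that, on a sufficiently dominant subcone of $S_M$, the relative matrix coefficient $\varphi_{\lambda,v}$ is computed from the pair $(\pi_N,\lambda_N)$.

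First I would fix a compact open subgroup $K$ fixing $v$ and leaving $\lambda$ invariant and, using \Cref{KT08-lem-2.5}, a finite set $\{P_i=M_iN_i\}$ of representatives for the $H$-conjugacy classes of proper $\theta$-split parabolic subgroups of $G$. The polar decomposition provides a finite cover of $Z_GH\backslash G$ by a compact piece --- on which $\varphi_{\lambda,v}$ is bounded and so contributes finitely --- together with cells of the form $Z_GH\,\Omega_i\,S_{M_i}^-(\epsilon)\,K$, with $\Omega_i$ compact and $0<\epsilon\leq 1$, on each of which the $G$-invariant measure is comparable, up to positive constants, to $\delta_{P_i}(s)^{-1}$ times counting measure over a set of representatives for $S_{M_i}^-(\epsilon)$ modulo $S_GS_{M_i}^1$; the subgroup $S_GS_{M_i}^1$ records precisely the directions of $S_{M_i}$ along which one remains in a bounded part of the cell. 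After shrinking $\epsilon$ if necessary, the relative canonical lifting gives, for $s\in S_{M_i}^-(\epsilon)$,
\[
\varphi_{\lambda,v}(s)\;=\;\delta_{P_i}(s)^{1/2}\,\ip{\lambda_{N_i}}{\pi_{N_i}(s)\,\bar v},
\]
where $\bar v$ is the image of $v$ in $V_{N_i}$. Decomposing the finite-dimensional $S_{M_i}$-stable subspace of $V_{N_i}$ generated by $\bar v$ into generalized $S_{M_i}$-eigenspaces, only the characters lying in $\Exp_{S_{M_i}}(\pi_{N_i},\lambda_{N_i})$ survive the pairing with $\lambda_{N_i}$, so
\[
\varphi_{\lambda,v}(s)\;=\;\delta_{P_i}(s)^{1/2}\sum_{\chi\in\Exp_{S_{M_i}}(\pi_{N_i},\lambda_{N_i})}\chi(s)\,Q_{\chi}(s),
\]
with each $Q_\chi$ a polynomial in the lattice coordinates of $S_{M_i}/S_{M_i}^1$, coming from the unipotent part of the $S_{M_i}$-action.

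Substituting this expansion, the measure factor is exactly calibrated so that the powers of $\delta_{P_i}$ cancel, and the $i$-th cell contributes a quantity comparable to $\sum_{s}\bigl|\sum_{\chi}\chi(s)Q_\chi(s)\bigr|^2$, the outer sum running over representatives of $S_{M_i}^-(\epsilon)$ modulo $S_GS_{M_i}^1$. It remains to show that such a sum converges if and only if $\abs{\chi(s)}<1$ for every $\chi\in\Exp_{S_{M_i}}(\pi_{N_i},\lambda_{N_i})$ and every $s\in S_{M_i}^-\setminus S_GS_{M_i}^1$. The ``if'' direction is a geometric-series estimate: on the monoid in question, a polynomial in the lattice coordinates times a strictly contracting character is square-summable. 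For ``only if'', one uses linear independence of distinct characters on the cone together with the fact that, by the definition of $\Exp_{S_{M_i}}(\pi_{N_i},\lambda_{N_i})$, the vector $v$ can be chosen so that a prescribed offending exponent genuinely appears in the expansion; restricting the sum to the cyclic monoid generated by a single $s_0$ with $\abs{\chi(s_0)}\geq 1$ then already forces divergence. Running the equivalence over the finite family $\{P_i\}$, and noting that the condition \eqref{eq-rel-cass-condition} is visibly invariant under $H$-conjugation of $P=MN$, yields the criterion.

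The delicate point is the relative constant-term theory: building $\lambda_N$ by canonical lifting in the symmetric-space setting and establishing the precise identity for $\varphi_{\lambda,v}$ on a dominant subcone, with the correct modular twist and uniformly in $v$, is where the real content sits (this is the work of \cite{kato--takano2008,lagier2008}). One must also be careful that the polar decomposition of $Z_GH\backslash G$ is genuinely indexed by $H$-conjugacy classes of $\theta$-split parabolics with the asserted volume growth, and keep track of the possibly non-reduced restricted root system $\overline\Phi_0$ and of $S_G$ when passing between $S_M^-$, $S_M^-(\epsilon)$, and $S_M^-\setminus S_GS_M^1$.
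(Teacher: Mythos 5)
This theorem is not proved in the paper at all: it is imported verbatim as \cite[Theorem 4.7]{kato--takano2010}, so there is no internal proof to measure your sketch against. Judged against the argument in the cited source, your outline is faithful in its essentials: the proof of Kato and Takano does rest on (i) a Cartan-type decomposition of $H\backslash G$ relative to the maximal $(\theta,F)$-split torus, with volume asymptotics comparable to $\delta_{P}(s)^{-1}$ on the dominant cone (this decomposition is itself a nontrivial input, due to Benoist--Oh and Delorme--S\'echerre, and in Kato--Takano it enters through a stratification of $S_0^-$ into subcones indexed by the $\theta$-split subsets of $\overline\Delta_0$, i.e.\ by standard $\theta$-split parabolics, rather than by a cover literally indexed by conjugacy classes of parabolics --- a harmless repackaging), and (ii) the relative constant term $\lambda_N$ of \cite{kato--takano2008,lagier2008} together with the asymptotic identity $\varphi_{\lambda,v}(s)=\delta_P(s)^{1/2}\ip{\lambda_N}{\pi_N(s)\bar v}$ on $S_M^-(\epsilon)$ with $\epsilon$ depending on $v$, which you state correctly. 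Your treatment of the ``only if'' direction (linear independence of characters to rule out cancellation among exponent terms, then divergence along a cyclic submonoid generated by an $s_0$ with $\abs{\chi(s_0)}\geq 1$) is the standard way this is handled and matches the spirit of the source. So as a blind reconstruction the sketch is sound, with the genuinely hard content correctly located in the two cited inputs rather than claimed as elementary; the only caution is that the Cartan decomposition and its volume estimates should be attributed and not treated as formal, since for a general symmetric space they are exactly what makes the criterion nontrivial. (Incidentally, the statement as quoted has a typo: $\lambda$ should be a nonzero element of $\Hom_H(\pi,1)$, not $\Hom_H(\pi,V)$.)
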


\subsection{Conventions regarding $\Sp_{2n}(F) \backslash \GL_{2n}(F)$}\label{sec-conventions}
From now on, unless otherwise specified, we let $G = \GL_{2n}(F)$ and let $H = \Sp_{2n}(F)$, where $n\geq 2$.   We will realize the symplectic group $H$ explicitly as the subgroup of $G$ fixed pointwise by the involution $\theta$ given by
\begin{align*}
	\theta(g) &= \ve_{2n}	^{-1} \tran{g}^{-1} \ve_{2n},
\end{align*}
where $\tran{g}$ denotes the transpose of $g\in G$,
\begin{align*}
\ve_{2n} & = \left(\begin{matrix} 0 & J_n \\ -J_n & 0 \end{matrix} \right) \in \GL_{2n}(F)	& \text{and} & & J_n & = \left(\begin{matrix} & & 1 \\ & \iddots & \\ 1 & & \end{matrix}\right) \in \GL_n(F).
\end{align*}
Note that $\ve_{2n}$ is a nonsingular skew-symmetric element of $G$; moreover, $\ve_{2n}^{-1} = \tran\ve_{2n}$.
With this choice of involution, the subgroup of upper triangular matrices in $H$ is a Borel subgroup (over $F$).
Let $A_0$ be the maximal diagonal $F$-split torus of $G$. 

There is a right $G$-action on the set of involutions of $G$ given by 
\begin{align}\label{inv-g-action}
g\cdot\theta(x) = g^{-1} \theta (gxg^{-1}) g,	
\end{align}
for any $x,g\in G$.
Any involution of the form $g\cdot \theta$ is said to be $G$-equivalent to $\theta$.
If $x\in G$ is skew-symmetric, then we obtain a realization of the symplectic group as the fixed points of the involution $\theta_x$ defined by
\begin{align*}
\theta_x(g) = x^{-1} \tran{g}^{-1} x
\end{align*}
Moreover, the $G$-action on involutions is compatible with the right $G$-action on the set of skew-symmetric matrices given by $x\cdot g = \tran{g}xg$, for any $g\in G$ and any skew-symmetric matrix $x\in G$.  
Indeed, if $x, y\in G$ and $x$ is skew-symmetric, then $y \cdot \theta_x = \theta_{x\cdot y}$.

We will write $\diag(a_1,\ldots, a_m)$ to denote the diagonal $m\times m$ matrix with entries $a_1, \ldots, a_m$ on the main  diagonal.  Given a partition $(m_1\ldots, m_k)$ of a positive integer $m$, write $P_{(m_1,\ldots,m_k)}$ for the standard block-upper triangular parabolic subgroup of $\GL_m(F)$ with Levi factor $M_{(m_1,\ldots,m_k)}$ and unipotent radical $N_{(m_1,\ldots,m_k)}$.
 Write $\nu$ for the unramified character $|\det (\cdot) |_F$ of $\GL_m(F)$, where $m$ is understood from context.

\section{Speh representations}\label{sec-speh}
Recall that a representation $\pi$ of $\GL_m(F)$ is said to be generic if it admits a Whittaker model, that is, if there exists a nonzero intertwining operator in the space $\Hom_{N_m}(\pi, \ind_{N_m}^{\GL_m(F)} \Psi_m)$, where $N_m$ is the subgroup of $\GL_m(F)$ consisting of upper triangular unipotent matrices and $\Psi_m$ is a non-degenerate character of $N_m$.

Let $\delta$ be an irreducible square integrable representation of $\GL_n(F)$.  The parabolically induced representation $\nu^{1/2}\delta \times \nu^{-1/2}\delta$ has length two and admits a unique irreducible generic subrepresentation $\style{Z}(\delta,2)$ and a unique irreducible quotient $\style{U}(\delta,2)$ \cite{bernstein--zelevinsky1977,zelevinsky1980}.
In particular, we have the following short exact sequence of $G$-representations
\begin{align}\label{eq-speh-exact}
0 \rightarrow \style{Z}(\delta,2) \rightarrow 	\nu^{1/2}\delta \times \nu^{-1/2}\delta \rightarrow \style{U}(\delta,2) \rightarrow 0.
\end{align}
The representations $\style{U}(\delta, 2)$ are the Speh representations.  

\begin{rmk}
	The Speh representations, and generalized Speh representations, feature prominently in the classification of the unitary dual of general linear groups carried out by Tadi\'{c} \cite{tadic1986}.
\end{rmk}

Heumos and Rallis proved that the Speh representations $\style{U}(\delta, 2)$ are $H$-distinguished by constructing a nonzero $H$-invariant linear functional on the full induced representation $\nu^{1/2}\delta \times \nu^{-1/2}\delta$ and then appealing to \cite[Theorem 3.2.2]{heumos--rallis1990} to show that the generic subrepresentation $\style{Z}(\delta,2)$ cannot be $H$-distinguished.  
One can then appeal to the exact sequence \eqref{eq-speh-exact} conclude that the invariant functional on $\nu^{1/2}\delta \times \nu^{-1/2}\delta$ descends to a well-defined nonzero $H$-invariant linear functional on $\style{U}(\delta,2)$.
Note that the invariant form on $\nu^{1/2}\delta\times\nu^{-1/2}\delta$ may be realized via \cite[Proposition 7.1]{offen2017}, or an explicit formulation of this result as in \cite[Lemma 4.10]{smith2018}.  

\begin{rmk}
The method used by Heumos and Rallis to demonstrate the $\Sp_{2n}(F)$-distinction of the Speh representations $\style{U}(\delta,2)$ does not immediately extend to the generalized Speh representations $\style{U}(\delta,m)$, $m>2$ (see \Cref{sec-sp-parameters} for the definition).  However, Offen and Sayag \cite{offen--sayag2007} study the distinction of the generalized Speh representations by utilizing work of Jacquet and Rallis \cite{jacquet--rallis1996} and Bernstein's meromorphic continuation.  The method of Offen and Sayag, used to prove the ``hereditary property of symplectic periods," is a special case of the method of Blanc and Delorme \cite{blanc--delorme2008}.  We refer the reader to \cite{offen--sayag2007} for more details.
\end{rmk}

Next, let us recall descriptions of the generalized Steinberg representations and generalized Speh representations of general linear groups.
Let $\rho$ be an irreducible unitary supercuspidal representation of $\GL_r(F)$, $r\geq 1$.  Let $k\geq2$ be an integer. By \cite[Theorem 9.3]{zelevinsky1980}, the induced representation 
\begin{align*}
\nu^{\frac{k-1}{2}} \rho \times \ldots \times \nu^{\frac{1-k}{2}}\rho	
\end{align*}
of $\GL_{kr}(F)$ admits a unique irreducible subrepresentation $\style{Z}(\rho,k)$; moreover, $\style{Z}(\rho,k)$ is square integrable.  Often in the literature, $\style{Z}(\rho,k)$ is denoted by $\st(\rho,k)$ and these are the generalized Steinberg representations of $\GL_{kr}(F)$.
Let $\delta$ be a discrete series representation of $\GL_{d}(F)$, $d \geq 2$.
Let $m\geq 2$ be an integer.
By \cite[Theorem 6.1(a)]{zelevinsky1980}, the induced representation
\begin{align*}
\nu^{\frac{m-1}{2}}\delta \times \ldots \times \nu^{\frac{1-m}{2}}\delta
\end{align*}
 admits a unique irreducible (unitary) quotient $\style{U}(\delta, m)$. 
 The representations $\style{U}(\delta,m)$ are the generalized Speh representations studied by Tadi\'c \cite{tadic1985a}.

\section{$X$-distinguished Arthur parameters}\label{sec-parameters}
In the following discussion, $\G$ can be taken to be an arbitrary connected reductive group that is split over $F$.  We return to $G = \GL_{2n}(F)$ in \Cref{sec-sp-parameters}.

Let $\style W_F$ be the Weil group of $F$ and let $\style{L}_F = \style{W}_F \times \SL(2,\C)$ be the Weil--Deligne group of $F$.
Since $\G$ is split over $F$, $\style W_F$ acts trivially on the complex dual group $G^\vee$, and the $L$-group of $\G$ can be identified with the dual group ${}^L G = G^\vee$.    
Recall that an Arthur parameter, or an $A$-parameter, for $G$ is a continuous homomorphism $\psi: \lf\times \SL(2,\C) \rightarrow G^\vee$ such that
 \begin{itemize}
 	\item  the restriction $\psi\vert_{\wf}$ of $\psi$ to the Weil group $\wf$ is bounded,
	\item  the image of $\psi\vert_{\wf}$ consists of semisimple elements of $G^\vee$,
 	\item  and the restriction of $\psi$ to each of the two $\SL(2,\C)$ factors is algebraic.
 \end{itemize}
A Langlands parameter, or an $L$-parameter, for $G$ is a continuous homomorphism $\phi: \lf \rightarrow G^\vee$ such that
\begin{itemize}
	\item the image of $\phi \vert_{\wf}$ consists of semisimple elements of $G^\vee$,
	\item and the restriction of $\phi$ to $\SL(2,\C)$ is algebraic.
\end{itemize}

Inspired by work of Gaitsgory and Nadler, Sakellaridis and Venkatesh have associated to any $\G$-spherical $F$-variety $\X$ a complex dual group $G_X^\vee$, see \cite[Sections 2--3]{sakellaridis--venkatesh2017} (provided that the assumption of \cite[Proposition 2.2.2]{sakellaridis--venkatesh2017} on the spherical roots is satisfied).  
In addition, they described a \textit{distinguished morphism} $\varrho: G_X^\vee \times \SL(2,\C) \rightarrow G^\vee$ satisfying certain properties and unique up to conjugation by a canonical maximal torus in $G^\vee$ \cite[Section 3.2]{sakellaridis--venkatesh2017}.
Existence of distinguished morphisms has been proven in full generality by Knop and Schalke \cite{knop--schalke2017}.

\begin{defn}
An $A$-parameter  $\psi: \lf \times \SL(2,\C) \rightarrow G^\vee$ is $X$-distinguished if it factors through the distinguished morphism $\varrho: G_X^\vee \times \SL(2,\C) \rightarrow G^\vee$, that is, if and only if there exists a tempered (that is, bounded on $\wf$) $L$-parameter $\psi_X: \lf \rightarrow G_X^\vee$ such that $\psi = \varrho \circ (\phi_X\otimes \id)$, where $\id: \SL(2,\C) \rightarrow \SL(2,\C)$ is the identity.
\end{defn}

\begin{defn}
An $X$-distinguished $A$-parameter is $X$-elliptic if it factors through $\varrho$ via an elliptic $L$-parameter $\phi_{X}:\style{L}_F \rightarrow  G_X^\vee$, that is, the image of $\phi_X$ is not contained in any proper Levi subgroup of  $ G_X^\vee$.
\end{defn}

We recall the following conjecture \cite[Conjecture 1.3.1]{sakellaridis--venkatesh2017}.
\begin{conj}[Sakellaridis--Venkatesh]\label{SV-conj-Arthur-supp}
The support of the Plancherel measure for $L^2(X)$, as a representation of $G$, is contained in the union of Arthur packets attached to $X$-distinguished $A$-parameters.
\end{conj}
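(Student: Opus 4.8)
Since a proof of this conjecture in full generality is out of reach, I will describe how one would prove it for the symmetric space $X=\Sp_{2n}(F)\backslash\GL_{2n}(F)$ studied in the rest of the paper, where it reduces to two concrete --- if substantial --- inputs, one of which is the genuine obstacle. The plan is to combine the relative Plancherel decomposition of $L^2(X)$ with an induction on $n$, checking on each spectral piece that the attached Arthur parameter is $X$-distinguished. First I would invoke the relative Plancherel formula, which presents $L^2(X)$ as a direct integral --- over $H$-conjugacy classes of $\theta$-split parabolic subgroups $P=MN$ of $G$ --- of the representations $\iota_P^G\sigma$, where $\sigma$ runs over the relatively square-integrable representations of the smaller symmetric space $X'=M^\theta\backslash M$ twisted by unitary characters of $S_M/S_GS_M^1$. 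Granting this, the Plancherel support lies in the set of all such $\iota_P^G\sigma$, so it suffices to show: (i) every relatively square-integrable $\sigma$ of $X'$ has an $X'$-distinguished $A$-parameter; and (ii) the dual-group data are functorial for the boundary inclusions, i.e.\ $\sigma\mapsto\iota_P^G\sigma$ matches a morphism $G_{X'}^\vee\hookrightarrow G_X^\vee$ compatible with the distinguished morphisms $\varrho$, so that the $A$-parameter of $\iota_P^G\sigma$ is again $X$-distinguished.

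For $X=\Sp_{2n}(F)\backslash\GL_{2n}(F)$ the structure needed for (i) and (ii) is clean. From the determination of the restricted root system of $\GL_{2n}$ relative to $\theta$ in \Cref{sec-tori-pblc}, the $\theta$-split Levi subgroups are exactly the block subgroups $M\cong\GL_{2m_1}(F)\times\cdots\times\GL_{2m_k}(F)$ with all blocks of even size, and $M^\theta\cong\Sp_{2m_1}(F)\times\cdots\times\Sp_{2m_k}(F)$; hence $X'=M^\theta\backslash M$ is a product of symmetric spaces of the same type and smaller rank. For a proper $P$, (i) therefore reduces by the inductive hypothesis to each tensor factor: the relative discrete series of $X'$ is nonzero only when every $m_j\geq 2$ --- a factor $\Sp_2\backslash\GL_2\cong F^\times$ has no discrete spectrum, consistent with the non-distinction of generic representations --- and in that case it consists of tensor products of Speh representations by induction. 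The only genuinely new input is the case $P=G$: determining the relative discrete series of $X$ itself.

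This is the hard step: one must prove $L^2_{\operatorname{disc}}(X)=\{\,\style{U}(\delta,2):\delta\ \text{a discrete series representation of}\ \GL_n(F)\,\}$. The inclusion $\supseteq$ is \Cref{thm-speh-rds}, but the reverse inclusion is \emph{not} established in this paper (cf.\ the discussion around \cite[Remark 6.6]{smith2018}), and proving it is where the real difficulty lies. The natural route is the contrapositive of the Relative Casselman Criterion (\Cref{thm-relative-casselman}): given an irreducible $H$-distinguished $\omega$-representation $\pi$ with unitary central character that is not of Speh type, one would exhibit a proper $\theta$-split parabolic $P=MN$ and an exponent $\chi\in\Exp_{S_M}(\pi_N,\lambda_N)$ with $|\chi(s)|\geq 1$ for some $s\in S_M^-\setminus S_GS_M^1$. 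This demands tight control of the $\lambda_N$-relative exponents along every $\theta$-split parabolic, and one must invoke the ``disjointness of models'' phenomenon --- a generic representation, in particular one fully parabolically induced from discrete series, is never $\Sp_{2n}$-distinguished, by \cite[Theorem 3.2.2]{heumos--rallis1990} --- to eliminate the candidates induced from discrete series of $\theta$-elliptic Levi subgroups, precisely the subtlety flagged in \Cref{sec-elliptic-context}.

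The remaining parameter matching is essentially formal. Using the Sakellaridis--Venkatesh and Knop--Schalke computation underlying \Cref{prop-sp-parameters} --- for $X=\Sp_{2n}\backslash\GL_{2n}$ one has $G_X^\vee=\GL_n(\C)$ with distinguished morphism $\varrho(g,s)=g\otimes s$ on $\GL_n(\C)\times\SL(2,\C)$ --- an $A$-parameter of $\GL_{2n}(F)$ is $X$-distinguished exactly when it equals $\varrho\circ(\phi_X\otimes\id)$ for a tempered $\phi_X\colon\lf\to\GL_n(\C)$, and these are precisely the Arthur parameters $\phi_\delta\boxtimes[2]$ of the Speh representations $\style{U}(\delta,2)$ with $\delta\leftrightarrow\phi_X$; this matches $L^2_{\operatorname{disc}}(X)$. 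For a proper $\theta$-split $P$, the $A$-parameter of $\iota_P^G\bigl(\bigotimes_j\chi_j\style{U}(\delta_j,2)\bigr)$ is $\bigl(\bigoplus_j\chi_j\phi_{\delta_j}\bigr)\boxtimes[2]$, and since the $\chi_j$ are unitary and the $\phi_{\delta_j}$ tempered, $\bigoplus_j\chi_j\phi_{\delta_j}$ is a tempered parameter valued in $\GL_n(\C)=G_X^\vee$, so this parameter factors through $\varrho$ and is $X$-distinguished. Thus the conjecture for $X$ reduces to the two analytic inputs above --- the precise form of the relative Plancherel decomposition and the exhaustion of $L^2_{\operatorname{disc}}(X)$ by Speh representations --- and the decisive obstacle is the latter: showing there are no further relative discrete series.
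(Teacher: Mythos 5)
There is a fundamental mismatch here: \Cref{SV-conj-Arthur-supp} is not a result of this paper at all --- it is \cite[Conjecture 1.3.1]{sakellaridis--venkatesh2017}, recalled verbatim, and the paper neither proves it nor claims to. The paper's actual contributions run in the opposite logical direction: \Cref{prop-sp-parameters} computes which $A$-parameters are $X$-distinguished and $X$-elliptic, and \Cref{thm-speh-rds} verifies the containment that the conjecture (together with \Cref{SV-discrete}) \emph{predicts}, namely that the Speh representations $\style{U}(\delta,2)$ do lie in $L^2_{\operatorname{disc}}(X)$. So there is no ``paper's own proof'' to compare against, and your text, while candid about this, is a program rather than a proof of the statement.

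Concretely, both pillars of your reduction are unproven and are not supplied by the paper. First, the relative Plancherel decomposition you invoke --- $L^2(X)$ as a direct integral over $\theta$-split parabolics $P=MN$ of $\iota_P^G\sigma$ with $\sigma$ relatively discrete for $M^\theta\backslash M$ --- is essentially the refined conjecture \cite[Conjecture 16.2.2]{sakellaridis--venkatesh2017} in this case; assuming it in order to deduce \Cref{SV-conj-Arthur-supp} is close to circular unless you import an independently established Plancherel formula for $\Sp_{2n}(F)\backslash\GL_{2n}(F)$, which this paper does not contain. Second, the exhaustion $L^2_{\operatorname{disc}}(X)=\{\style{U}(\delta,2)\}$ is exactly what the paper explicitly declines to prove (see the remark following \Cref{prop-sp-parameters} and \cite[Remark 6.6]{smith2018}): the contrapositive use of \Cref{thm-relative-casselman} that you sketch requires showing that $\lambda_N$ is nonzero on specific ``bad'' exponent spaces for every non-Speh distinguished $\pi$, and it is precisely this control of the Kato--Takano functionals $\lambda_N$ that is the known obstruction, not merely the disjointness-of-models input from \cite[Theorem 3.2.2]{heumos--rallis1990}. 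The final parameter-matching paragraph is fine and agrees with \Cref{prop-sp-parameters}, but it is the only part of the argument that is actually available; the two analytic inputs on which everything rests remain open, so the proposal does not establish the statement.
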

In fact, Sakellaridis and Venkatesh give much more refined conjectures that predict a direct integral decomposition of $L^2(X)$ over $X$-distinguished $A$-parameters \cite[Conjecture 16.2.2]{sakellaridis--venkatesh2017}.
In addition, the refined conjectures make the following prediction about the $X$-distinguished $A$-parameters of the relative discrete series representations.
\begin{conj}[Sakellaridis--Venkatesh]\label{SV-discrete}A relative discrete series representation $\pi$ in $L^2(X)$ is contained in an Arthur packet corresponding to an $X$-distinguished $X$-elliptic $A$-parameter.
\end{conj}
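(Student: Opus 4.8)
The conjecture is stated for general spherical spaces; I will indicate how one would establish it for the symmetric space $X=\Sp_{2n}(F)\backslash\GL_{2n}(F)$ studied here. The plan is to combine two ingredients: \textbf{(A)} a classification result showing that \emph{every} irreducible representation of $G=\GL_{2n}(F)$ occurring in $L^2_{\mathrm{disc}}(X)$ is a Speh representation $\style{U}(\delta,2)$ with $\delta$ a discrete series of $\GL_n(F)$; and \textbf{(B)} the statement of \Cref{prop-sp-parameters}, that the $A$-parameter of each such $\style{U}(\delta,2)$ is $X$-distinguished and $X$-elliptic. Because $\GL_{2n}$ has singleton Arthur packets, (A) and (B) together imply that any $\pi\in L^2_{\mathrm{disc}}(X)$ is the unique member of an Arthur packet attached to an $X$-distinguished $X$-elliptic parameter, which is precisely the assertion of \Cref{SV-discrete} for this $X$.

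For (B) there is nothing new to do beyond \Cref{sec-sp-parameters}. Since $\delta$ is a discrete series of $\GL_n(F)$, its $L$-parameter $\phi_\delta\colon\lf\to\GL_n(\C)$ is bounded on $\wf$ and elliptic (its image lies in no proper Levi subgroup of $\GL_n(\C)$), and the Arthur parameter of $\style{U}(\delta,2)$ is $\psi=\phi_\delta\otimes\sigma_2$, where $\sigma_2$ is the $2$-dimensional irreducible representation of $\SL(2,\C)$. As computed in \Cref{prop-sp-parameters}, for $X=\Sp_{2n}(F)\backslash\GL_{2n}(F)$ one has $G_X^\vee=\GL_n(\C)$ and the distinguished morphism $\varrho\colon\GL_n(\C)\times\SL(2,\C)\to\GL_{2n}(\C)$ is $(g,a)\mapsto g\otimes\sigma_2(a)$. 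Hence $\psi=\varrho\circ(\phi_\delta\otimes\id)$ with $\phi_\delta$ tempered and elliptic, so $\psi$ is $X$-distinguished and $X$-elliptic.

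The substance lies in (A). Here I would: (i) invoke the classification of $\Sp_{2n}(F)$-distinguished irreducible representations of $\GL_{2n}(F)$ of Offen and Sayag \cite{offen--sayag2007,offen--sayag2008a,offen--sayag2008b}, which describes every distinguished $\pi$ through explicit Klyachko/Zelevinsky inducing data — in particular the generalized Speh representations $\style{U}(\delta,2m)$, and products of such with tempered distinguished pieces, all occur, and all but the case $m=1$ with no tempered factor must be excluded; (ii) for each candidate $\pi$ and each proper $\theta$-split parabolic $P=MN$ of $G$, whose classification is carried out in \Cref{sec-tori-pblc}, compute the Jacquet module $\pi_N$, the action of $S_M$ on it, and the Kato–Takano form $\lambda_N\in\Hom_{M^\theta}(\pi_N,1)$; and (iii) apply the Relative Casselman Criterion (\Cref{thm-relative-casselman}) to test the strict inequality \eqref{eq-rel-cass-condition} on the distinguished exponents $\Exp_{S_M}(\pi_N,\lambda_N)$. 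The expectation is that a distinguished $\pi$ not isomorphic to some $\style{U}(\delta,2)$ carries, along a suitable maximal $\theta$-split parabolic, an exponent $\chi$ with $\abs{\chi(s)}\geq 1$ for some $s\in S_M^-\setminus S_GS_M^1$ and with $\lambda_N$ nonzero on the generalized eigenspace $V_\chi$, so $\pi$ fails to be $H$-relatively square integrable. Combined with \Cref{thm-speh-rds}, this would identify $L^2_{\mathrm{disc}}(X)$ exactly with the Speh representations, completing (A).

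The main obstacle is step (iii), specifically the passage from the full exponent set $\Exp_{S_M}(\pi_N)$ to the \emph{distinguished} exponents $\Exp_{S_M}(\pi_N,\lambda_N)$ — that is, determining which subquotients of a Jacquet module $\pi_N$ are seen by the canonical functional $\lambda_N$. This is the ``disjointness-of-models'' phenomenon flagged in the introduction and in \cite[Remark 6.6]{smith2018}: the Jacquet module of a symplectically distinguished representation along a $\theta$-split parabolic can contain generic subquotients, which are never symplectically distinguished by \cite[Theorem 3.2.2]{heumos--rallis1990}, alongside Speh-type ones, and one must show that $\lambda_N$ annihilates the generic — and more generally the badly-exponented — part. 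Making this precise appears to require a structural analysis of symplectic models of standard modules and of the hereditary behaviour of symplectic periods under Jacquet functors, in the spirit of Offen–Sayag \cite{offen--sayag2007,offen--sayag2008b} and Blanc–Delorme \cite{blanc--delorme2008}; this is exactly the point where the present paper stops short of a characterization of the relative discrete series, and hence the crux of any proof of \Cref{SV-discrete} for $X$.
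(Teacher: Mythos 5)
The statement you are evaluating is not a theorem of the paper at all: \Cref{SV-discrete} is quoted as a conjecture of Sakellaridis--Venkatesh, and the paper gives no proof of it. What the paper does is (a) verify in \Cref{prop-sp-parameters} that among irreducible unitary $\Sp_{2n}(F)$-distinguished representations the $X$-distinguished $X$-elliptic parameters single out exactly the Speh representations $\style{U}(\delta,2)$, and (b) prove the \emph{consistency} statement \Cref{thm-speh-rds} that these representations do lie in $L^2_{\operatorname{disc}}(X)$. The exhaustion statement your argument needs — that every irreducible summand of $L^2_{\operatorname{disc}}(X)$ is some $\style{U}(\delta,2)$ — is explicitly disclaimed: see the remark following \Cref{prop-sp-parameters} and the introduction, which say the paper does not show that the generalized Speh representations $\style{U}(\delta,2m)$, $m\geq 2$, fail to be relatively discrete, for exactly the reason you name.

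That is the genuine gap in your proposal. Your ingredient (B) is just a restatement of \Cref{prop-sp-parameters}, which is fine but already in the paper. Your ingredient (A) is the entire content of the conjecture in this case, and steps (i)--(iii) do not establish it: the Offen--Sayag classification tells you which $\pi$ are distinguished, but applying \Cref{thm-relative-casselman} in the exclusion direction requires showing that some exponent $\chi$ violating \eqref{eq-rel-cass-condition} actually survives in $\Exp_{S_M}(\pi_N,\lambda_N)$, i.e.\ that the Kato--Takano functional $\lambda_N$ is nonzero on the corresponding generalized eigenspace. There is no known technique in the paper (or in \cite{smith2018}, cf.\ Remark 6.6 there) for proving such nonvanishing, and your write-up replaces this step with ``the expectation is that\ldots'', which is not an argument. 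So the proposal reduces \Cref{SV-discrete} (for this particular $X$; the conjecture as stated is for general spherical varieties) to an open classification problem rather than proving it, stopping at precisely the same obstruction the paper itself identifies.
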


Of course, in the setting we are concerned with, the situation is greatly simplified by the fact that Arthur packets (and $L$-packets) for the general linear group are singleton sets.

\subsection{Distinguished $A$-parameters for $\Sp_{2n}\backslash \GL_{2n}$}\label{sec-sp-parameters}
Let $\X$ be the symmetric variety $\Sp_{2n}\backslash \GL_{2n}$.  
The dual group of $G = \GL_{2n}(F)$ is $G^\vee = \mathrm{GL}(2n,\C)$.
The dual group $G_X^\vee$ of $\X$ is the rank-$n$ complex general linear group, that is, $G_X^\vee = \mathrm{GL}(n,\C)$ (see \Cref{lem-rest-roots-dual-gp}). 
 
 Let $\pi$ be an irreducible unitary $\Sp_{2n}(F)$-distinguished representation of $G$.  Let $\psi_\pi: \style{L}_F\times \SL(2,\C) \rightarrow \mathrm{GL}(2n,\C)$ be the $A$-parameter of $\pi$.
 We refer the reader to \cite{Arthur-book,xu2017a} for the description of the $A$-parameters of the representations in the unitary dual of $G$, including the generalized Steinberg and Speh representations. 
The distinguished morphism $\varrho : \mathrm{GL}(n,\C) \times \SL(2,\C) \rightarrow \mathrm{GL}(2n,\C)$ is given by the tensor product of the standard $n$-dimensional representation of $\mathrm{GL}(n,\C)$ with the standard $2$-dimensional representation $\style{S}(2)$ of $\SL(2,\C)$ \cite[Example 1.3.2]{sakellaridis--venkatesh2017}.  
Thus, if $\phi: \lf \rightarrow \mathrm{GL}(n,\C)$ is an $L$-parameter, then $\varrho \circ (\phi \otimes \id) = \phi \otimes \style{S}(2)$.
 By \Cref{SV-discrete}, we expect $\pi$ to be relatively discrete if its $A$-parameter $\psi_\pi$ has the following two properties.
\begin{description}
\item[P1]  The $A$-parameter $\psi_\pi$ is $X$-distinguished, that is, $\psi_\pi$ factors through the distinguished morphism $\varrho$; in particular, $\psi_\pi = \phi_{\pi,X} \otimes \style{S}(2)$, where $\phi_{\pi,X}: \style{L}_F \rightarrow \mathrm{GL}(n,\C)$ is a tempered $L$-parameter.
\item[P2]  The $L$-parameter $\phi_{\pi,X}: \style{L}_F \rightarrow \mathrm{GL}(n,\C)$ is elliptic, that is, the image of $\phi_{\pi,X}$ is not contained in any proper parabolic subgroup of $\mathrm{GL}(n,\C)$. 
\end{description}
Recall that, under the Local Langlands Correspondence, a tempered elliptic parameter $\phi: \style{L}_F \rightarrow \mathrm{GL}(n,\C)$ corresponds to a discrete series representation of $\GL_n(F)$.

\begin{prop}\label{prop-sp-parameters}
	Let $\pi$ be an irreducible unitary 	$\Sp_{2n}(F)$-distinguished  representations of $\GL_{2n}(F)$.  Let $\psi_\pi: \style{L}_F \times \SL(2,\C) \rightarrow \mathrm{GL}(2n,\C)$ be the $A$-parameter of $\pi$.
The $A$-parameter $\psi_\pi$ is $X$-distinguished and $X$-elliptic if and only if $\pi$ is isomorphic to a Speh representation $\style{U}(\delta,2)$ for some discrete series representation $\delta$ of $\GL_n(F)$.
\end{prop}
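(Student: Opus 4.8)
The plan is to unwind the definitions of $X$-distinguished and $X$-elliptic and to match them against the known description of the $A$-parameters of the unitary dual of $\GL_{2n}(F)$, using throughout that $A$-packets (and $L$-packets) for general linear groups are singletons, so that an irreducible representation of $\GL_{2n}(F)$ is pinned down by its $A$-parameter. Two external inputs do all the work. First, by the Local Langlands Correspondence for $\GL_n(F)$, an $L$-parameter $\phi\colon\lf\to\mathrm{GL}(n,\C)$ is \emph{both} tempered (bounded on $\wf$) \emph{and} elliptic --- equivalently, $\phi$ is irreducible as an $n$-dimensional representation of $\lf$ and is bounded on $\wf$ --- precisely when it is the parameter $\phi_\delta$ of a discrete series representation $\delta$ of $\GL_n(F)$. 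Second, by the classification of the unitary dual of $\GL$ and its $A$-parameters (see \cite{Arthur-book,xu2017a}, building on \cite{zelevinsky1980,tadic1985a}), the Speh representation $\style{U}(\delta,2)$ of $\GL_{2n}(F)$ attached to a discrete series $\delta$ of $\GL_n(F)$ has $A$-parameter $\phi_\delta\otimes\style{S}(2)$. Together with \Cref{lem-rest-roots-dual-gp}, which gives $G_X^\vee=\mathrm{GL}(n,\C)$, and with the shape of the distinguished morphism $\varrho\colon\mathrm{GL}(n,\C)\times\SL(2,\C)\to\mathrm{GL}(2n,\C)$, namely $\varrho\circ(\phi\otimes\id)=\phi\otimes\style{S}(2)$ (\cite[Example 1.3.2]{sakellaridis--venkatesh2017}), both implications become formal.

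For the forward direction, assume $\psi_\pi$ is $X$-distinguished and $X$-elliptic. By definition there is an $L$-parameter $\phi_{\pi,X}\colon\lf\to\mathrm{GL}(n,\C)$ that is tempered (by $X$-distinction) and elliptic (by $X$-ellipticity) with $\psi_\pi=\varrho\circ(\phi_{\pi,X}\otimes\id)=\phi_{\pi,X}\otimes\style{S}(2)$. By the first input, $\phi_{\pi,X}=\phi_\delta$ for a discrete series representation $\delta$ of $\GL_n(F)$, uniquely determined by $\phi_{\pi,X}$. Then $\psi_\pi=\phi_\delta\otimes\style{S}(2)$ is, by the second input, exactly the $A$-parameter of $\style{U}(\delta,2)$, and since the $A$-parameter determines the representation we conclude $\pi\cong\style{U}(\delta,2)$. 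For the converse, suppose $\pi\cong\style{U}(\delta,2)$ with $\delta$ a discrete series of $\GL_n(F)$. Then $\phi_\delta$ is a tempered elliptic $L$-parameter valued in $\mathrm{GL}(n,\C)$, and $\psi_\pi=\phi_\delta\otimes\style{S}(2)=\varrho\circ(\phi_\delta\otimes\id)$ exhibits $\psi_\pi$ as factoring through $\varrho$ via the tempered parameter $\phi_\delta$; hence $\psi_\pi$ is $X$-distinguished, and it is $X$-elliptic because $\phi_\delta$ is elliptic. (The hypothesis that $\pi$ is $\Sp_{2n}(F)$-distinguished is not needed for the equivalence; it holds automatically on the $\style{U}(\delta,2)$ side by Heumos--Rallis.)

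I expect no genuine mathematical obstacle here: the argument is bookkeeping. The one point requiring care is matching conventions --- checking that ``$X$-elliptic'' refers to ellipticity of the $\mathrm{GL}(n,\C)$-valued parameter $\phi_{\pi,X}$ and not of the (visibly non-tempered) $A$-parameter $\psi_\pi$ itself, confirming that a parameter for $\GL_n$ that is simultaneously tempered and elliptic corresponds under LLC to a discrete series of $\GL_n(F)$, and reconciling the normalizations in the $A$-parameter conventions of \cite{Arthur-book,xu2017a} with the tensor-with-$\style{S}(2)$ description of $\varrho$ coming from \cite{sakellaridis--venkatesh2017}. So the ``hard part'' is really just the careful citation of these inputs rather than any new argument.
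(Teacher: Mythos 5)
Your proof is correct, but it takes a genuinely different route from the paper. You argue entirely on the Galois side: unwinding the definitions, $X$-distinguished plus $X$-elliptic means $\psi_\pi=\phi_{\pi,X}\otimes\style{S}(2)$ with $\phi_{\pi,X}$ tempered and elliptic, hence $\phi_{\pi,X}=\phi_\delta$ for a discrete series $\delta$ of $\GL_n(F)$ by the Local Langlands Correspondence, and then you conclude $\pi\cong\style{U}(\delta,2)$ by invoking the fact that the parameter determines the irreducible unitary representation (singleton packets, injectivity of the Tad\'{i}c/Arthur dictionary); the converse is immediate. The paper instead starts from the hypothesis that $\pi$ is $\Sp_{2n}(F)$-distinguished and invokes the Offen--Sayag classification, which says $\pi$ is a product of factors $\style{U}(\style{Z}(\rho_i,k_i),2m_i)$ and $\pi(\style{U}(\style{Z}(\rho_i,k_i),2m_i),\alpha_i)$; it then writes out the explicit direct-sum $A$-parameter and imposes the two conditions step by step (factoring through $\varrho$ forces $m_i=1$, temperedness forces $\alpha_i=0$, ellipticity forces a single summand). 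Your route is shorter and, as you observe, shows the distinction hypothesis is not actually needed for the stated equivalence; what the paper's route buys is an explicit picture of exactly which distinguished unitary representations fail each condition (the generalized Speh representations $\style{U}(\delta,2m)$ with $m\geq 2$ fail \textbf{P1}, complementary-series factors fail temperedness, several factors fail \textbf{P2}), which feeds the surrounding discussion. The only point you lean on that deserves explicit acknowledgement is the injectivity of $\pi\mapsto\psi_\pi$ on the unitary dual of $\GL_{2n}(F)$ (equivalently, that the parameter $\phi_\delta\otimes\style{S}(2)$ pins down $\style{U}(\delta,2)$ among all irreducible unitary representations, not merely within a packet); this is standard and the paper uses the same dictionary implicitly, so it is a citation issue rather than a gap.
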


\begin{proof}	
	Let $\pi$ be an irreducible unitary 	$\Sp_{2n}(F)$-distinguished  representations of $\GL_{2n}(F)$.
	Following the notation of \cite{offen--sayag2007}, let $\pi(\sigma, \alpha) = \nu^{\alpha}\sigma \times \nu^{-\alpha}\sigma$, where $\alpha \in \R$ so that $|\alpha|<1/2$, and $\sigma$ is a smooth representation of $\GL_d(F)$, for some $d\geq 1$.
	Offen and Sayag \cite{offen--sayag2007,offen--sayag2008b} have shown that $\pi$ must be equivalent to a representation of the form
\begin{align*}
\left( \bigtimes_{i=1}^l \style{U}(\style{Z}(\rho_i, k_i), 2m_i) \right) 	\times \left( \bigtimes_{i=l+1}^t \pi(\style{U}(\style{Z}(\rho_i,k_i), 2m_i),\alpha_i)\right),
\end{align*}
where $2n = \sum_{i=1}^l 2k_ir_im_i + \sum_{i=l+1}^t 4k_ir_im_i$, the representation $\rho_i$ of $\GL_{r_i}(F)$ is irreducible, unitary and supercuspidal, and $\alpha_i \in \R$ with $|\alpha_i|<1/2$.

Let $\phi_{\rho_i}:\style{W}_F \rightarrow \mathrm{GL}(r_i,\C)$ be the $L$-parameter of the supercuspidal representation $\rho_i$.  Write  $\style{S}(d)\cong \Sym^{d-1}(\C^2)$ for the unique (up to isomorphism) $d$-dimensional irreducible representation of $\SL(2,\C)$.  
Let $|\cdot|_{\style{W}_F}:\style{W}_F \rightarrow \R_{>0}$ denote the absolute value on the Weil group given by $|\cdot|_{\style{W}_F} = |\operatorname{Art}_F^{-1}(\cdot)|_F$, where $\operatorname{Art}_F: F^\times \rightarrow \style{W}_F^{\operatorname{ab}}$ is the Artin map and $|\cdot|_F$ is the (normalized) absolute value on $F$.
Up to equivalence, the $A$-parameter $\psi_\pi$ of $\pi$ is equal to
\begin{align}\label{eq-all-sp-parameter}
\bigoplus_{i=1}^l \psi_{\style{U}(\style{Z}(\rho_i,k_i),2m_i)} \oplus \bigoplus_{i=l+1}^t \left(|\cdot|_{\style{W}_F}^{\alpha_i}\psi_{\style{U}(\style{Z}(\rho_i,k_i),2m_i)}	\oplus |\cdot|_{\style{W}_F}^{-\alpha_i}\psi_{\style{U}(\style{Z}(\rho_i,k_i),2m_i)} \right),
\end{align}
where
\begin{align}\label{eq-A-parameter-speh-type}
\psi_{\style{U}(\style{Z}(\rho_i,k_i),2m_i)} & 	= \phi_{\style{Z}(\rho_i,k_i)}\otimes \style{S}(2m_i) : \style{L}_F \times \SL(2,\C) \rightarrow \mathrm{GL}(2k_ir_im_i,\C)
\end{align}
is the $A$-parameter of the generalized Speh representation $\style{U}(\style{Z}(\rho_i,k_i),2m_i)$, and
\begin{align*}
\phi_{\style{Z}(\rho_i,k_i)} & 	= \phi_{\rho_i}\otimes \style{S}(k_i) : \style{L}_F  \rightarrow \mathrm{GL}(k_ir_i,\C)
\end{align*} 
is the $L$-parameter of the generalized Steinberg representation $\style{Z}(\rho_i,k_i)$.

The $A$-parameter $\psi_\pi$ is $X$-distinguished (\textbf{P1}) if and only if $\psi_\pi = \phi_{\pi,X} \otimes \style{S}(2)$, for some tempered $L$-parameter $\phi_{\pi,X}: \lf \rightarrow \mathrm{GL}(n,\C)$.
In light of \eqref{eq-all-sp-parameter} and \eqref{eq-A-parameter-speh-type}, we first notice that $\psi_\pi$ factors through the distinguished morphism $\varrho$ if and only if $m_i=1$, for all $1\leq i \leq t$.
In this case, 
\begin{align*}
\pi & \cong \left( \bigtimes_{i=1}^l \style{U}(\style{Z}(\rho_i, k_i), 2) \right) 	\times \left( \bigtimes_{i=l+1}^t \pi(\style{U}(\style{Z}(\rho_i,k_i), 2),\alpha_i)\right),
\end{align*}
and 
\begin{align*}
& \psi_\pi  = \bigoplus_{i=1}^l \psi_{\style{U}(\style{Z}(\rho_i,k_i),2)} \oplus \bigoplus_{i=l+1}^t \left(|\cdot|_{\style{W}_F}^{\alpha_i}\psi_{\style{U}(\style{Z}(\rho_i,k_i),2)}	\oplus |\cdot|_{\style{W}_F}^{-\alpha_i}\psi_{\style{U}(\style{Z}(\rho_i,k_i),2)} \right)\\
& = \left(\bigoplus_{i=1}^l \phi_{\style{Z}(\rho_i,k_i)} \oplus \bigoplus_{i=l+1}^t |\cdot|_{\style{W}_F}^{\alpha_i}\phi_{\style{Z}(\rho_i,k_i)}	\oplus |\cdot|_{\style{W}_F}^{-\alpha_i}\phi_{\style{Z}(\rho_i,k_i)} \right) \otimes\style{S}(2)\\
& = \phi_{\pi,X} \otimes \style{S}(2),
\end{align*}
where $\phi_{\pi,X}: \style{L}_F \rightarrow \mathrm{GL}(n,\C)$ is the $L$-parameter
\begin{align*}
\bigoplus_{i=1}^l \phi_{\style{Z}(\rho_i,k_i)} \oplus \bigoplus_{i=l+1}^t |\cdot|_{\style{W}_F}^{\alpha_i}\phi_{\style{Z}(\rho_i,k_i)}	\oplus |\cdot|_{\style{W}_F}^{-\alpha_i}\phi_{\style{Z}(\rho_i,k_i)}.	
\end{align*}
Moreover, the $L$-parameter $\phi_{\pi,X}$ is tempered if and only if $\phi_{\pi,X}$ restricted to $\wf$ has bounded image, that is, if and only if $l=t$ (or, equivalently, $\alpha_i=0$ for all $l+1 \leq i \leq t$). In particular, the representations $\pi(\style{U}(\style{Z}(\rho_i,k_i),2),\alpha_i)$ cannot appear in the inducing data of $\pi$.  Thus, $\psi_\pi$ is $X$-distinguished (\textbf{P1}) if and only if $\psi_\pi = \phi_{\pi,X} \otimes \style{S}(2)$, where
\begin{align}
\label{prop-temp-l} \phi_{\pi,X} & = \bigoplus_{i=1}^l \phi_{\style{Z}(\rho_i,k_i)}.
\end{align}
The $L$-parameter $\phi_{\pi,X}$ is $X$-elliptic (\textbf{P2}) if and only if and only if there is precisely one direct summand in \eqref{prop-temp-l}  (that is, $l=1$) and $\phi_{\pi,X} = \phi_{\style{Z}(\rho,k)}$ corresponds to the discrete series representation $\delta=\style{Z}(\rho,k)$ of $\GL_n(F)$.
In particular, $\psi_\pi$ is $X$-distinguished (\textbf{P1}) and $X$-elliptic (\textbf{P2}) if and only if  $\psi_\pi = \phi_{\style{Z}(\rho,k)} \otimes \style{S}(2)$, in which case, by the Local Langlands Correspondence, $\pi \cong \style{U}(\delta,2)$ is a Speh representation.
\end{proof}

In summary, \Cref{prop-sp-parameters} allows us to interpret \Cref{SV-discrete} as predicting that only the Speh representations $\style{U}(\delta,2)$, where $\delta=\style{Z}(\rho,k)$ is a discrete series representation of $\GL_n(F)$, appear in the discrete spectrum of $X = \Sp_{2n}(F) \backslash \GL_{2n}(F)$.
The goal of the rest of this paper is to prove that the representations $\style{U}(\delta,2)$ do indeed appear in $L^2_{\operatorname{disc}}(X)$.

\begin{rmk}
We do not show that generalized Speh representations	 $\style{U}(\delta,2m)$, $m\geq 2$, are not relatively discrete despite the fact that \Cref{SV-discrete} predicts that these representations do not appear in $L^2_{\operatorname{disc}}(X)$.  See \cite[Remark 6.6]{smith2018} for a discussion of the difficulties therein.
\end{rmk}

\section{Tori and parabolic subgroups: structure of $\Sp_{2n}(F)\backslash \GL_{2n}(F)$}\label{sec-tori-pblc}
In this section, we identify the $\theta$-split parabolic subgroups required for our application of the Relative Casselman Criterion.
First we introduce a second involution that is $G$-equivalent to $\theta$ (\textit{cf}.~\Cref{sec-conventions}).
Let $w_+ \in G$ be the permutation matrix associated to the permutation
\begin{align*}
	\begin{cases} 2i-1 \mapsto i &  1\leq i \leq n \\
				  2i \mapsto 2n+1-i &  1 \leq i \leq n	
	\end{cases}
\end{align*}
of $\{1, \ldots, 2n\}$.  We have chosen $w_+$ such that
\begin{align*}
\ve_{2n} \cdot w_+ & = \tran{w_+} \ve_{2n} w_+ 
 = w_+^{-1} \ve_{2n} w_+ 
 = \left ( \begin{matrix} 0 & 1 & & &	\\
 						   -1 & 0 & & & \\
 						   & & \ddots & & \\
 						   	& & & 0 & 1 \\
 						  & & & -1 & 0
 \end{matrix} \right).
\end{align*}
Let $x_{2n}$ denote the nonsingular skew-symmetric matrix $\ve_{2n} \cdot w_+$ and let $\theta_{x_{2n}}$ be the associated involution of $G$.  
As above, we have that $\theta_{x_{2n}} = \theta_{\ve_{2n}\cdot w_+}= w_+ \cdot \theta$, and $\theta_{x_{2n}}$ is $G$-equivalent to $\theta$.

\begin{lem}\label{lem-split-torus}
Let $A_0$ be the maximal diagonal $F$-split torus of $G$.
The torus $A_0$ is $\theta$-stable and contains the maximal $(\theta,F)$-split torus $S_0$, where
\begin{align*}
S_0 = \{ \diag(a_1,\ldots, a_n, a_n,\ldots, a_1) : a_i \in F^\times, 1 \leq i \leq n \}.
\end{align*}
\end{lem}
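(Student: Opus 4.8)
The plan is to verify directly that $A_0$ is $\theta$-stable, then exhibit the explicit torus $S_0$ as $(\theta,F)$-split, and finally argue maximality by a dimension (rank) count. First I would note that $\theta(g) = \varepsilon_{2n}^{-1}\,{}^tg^{-1}\,\varepsilon_{2n}$ and that $\varepsilon_{2n}$ is an anti-diagonal matrix (up to signs): for $a = \diag(a_1,\dots,a_{2n})$ one has ${}^ta^{-1} = \diag(a_1^{-1},\dots,a_{2n}^{-1})$, and conjugating by the anti-diagonal $\varepsilon_{2n}$ reverses the order of the diagonal entries. Concretely, $\theta(\diag(a_1,\dots,a_{2n})) = \diag(a_{2n}^{-1}, a_{2n-1}^{-1},\dots,a_1^{-1})$. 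This is again a diagonal matrix, so $A_0$ is $\theta$-stable; this is a one-line computation once the action of conjugation by $J_n$-type permutation matrices on diagonal tori is recorded.

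Next I would plug the proposed parametrization of $S_0$ into this formula: for $s = \diag(a_1,\dots,a_n,a_n,\dots,a_1)$, reversing the diagonal entries and inverting gives $\diag(a_1^{-1},\dots,a_n^{-1},a_n^{-1},\dots,a_1^{-1}) = s^{-1}$, so every element of $S_0$ is $\theta$-split; since $S_0$ is visibly an $F$-split torus (it is a subtorus of the diagonal torus defined by rational conditions $a_i = a_{2n+1-i}$), it is $(\theta,F)$-split. It is an $n$-dimensional torus. To finish I would show it is maximal: a maximal $(\theta,F)$-split torus has dimension equal to the rank of the restricted root system $\overline\Phi_0$, equivalently to the ``$F$-rank of the symmetric space'' $H\backslash G$. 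One clean way is to observe that $S_0$ is contained in the $-1$ eigenspace of $\theta$ on $A_0$; decomposing the cocharacter lattice $X_*(A_0) \cong \mathbb{Z}^{2n}$ under the involution $\theta$, which acts by $e_i \mapsto -e_{2n+1-i}$, the $(-1)$-eigenspace (over $\mathbb{Q}$) is spanned by $e_i - (-e_{2n+1-i}) \cdot(-1)$... more carefully: $\theta_*(e_i) = -e_{2n+1-i}$, so the fixed vectors of $\theta_*$ on the $(-1)$-twist, i.e. the cocharacters $\mu$ with $\theta_*\mu = -\mu$, are exactly those with $\mu_i = \mu_{2n+1-i}$ for all $i$, a sublattice of rank $n$. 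Hence the maximal $(\theta,F)$-split subtorus of $A_0$ has dimension exactly $n$, and since $A_0$ is a $\theta$-stable maximal $F$-split torus containing $S_0$ (by construction $S_0 \subseteq A_0$), any maximal $(\theta,F)$-split torus of $G$ has dimension at most $n$ as well (by Helminck--Wang, a maximal $(\theta,F)$-split torus is conjugate into any $\theta$-stable maximal $F$-split torus), so $S_0$ is maximal.

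The main obstacle is making the maximality argument airtight: one must connect ``$(\theta,F)$-split torus contained in $A_0$'' with the eigenspace computation on the cocharacter lattice, and then invoke conjugacy of maximal $(\theta,F)$-split tori (\cite{helminck--wang1993}) to reduce an arbitrary maximal $(\theta,F)$-split torus to one inside $A_0$. An alternative, perhaps cleaner, route would be to pass to the $G$-equivalent involution $\theta_{x_{2n}}$ introduced just above, for which $x_{2n}$ is the standard block-diagonal symplectic form built from $2\times 2$ blocks $\left(\begin{smallmatrix} 0 & 1 \\ -1 & 0\end{smallmatrix}\right)$; there the $(\theta_{x_{2n}},F)$-split diagonal torus is the ``every coordinate appears twice in adjacent slots'' torus $\diag(b_1,b_1,b_2,b_2,\dots,b_n,b_n)$, manifestly of rank $n$, and transporting back by $w_+$ recovers exactly the stated $S_0$. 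Either way the computations are short; I would present the direct verification for $\theta$ itself and remark that maximality follows from the rank count on $X_*(A_0)^{-\theta}$.
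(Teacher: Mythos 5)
Your computation that $A_0$ is $\theta$-stable and that $\diag(a_1,\ldots,a_{2n})$ is $\theta$-split exactly when $a_{2n+1-i}=a_i$ is the same as the paper's, and it correctly identifies $S_0$ as the $(\theta,F)$-split component of $A_0$, of dimension $n$. The gap is in the maximality step. You bound the dimension of an arbitrary maximal $(\theta,F)$-split torus of $G$ by the rank of the $(-1)$-eigenlattice of $\theta$ on $X_*(A_0)$, justified by the claim that ``a maximal $(\theta,F)$-split torus is conjugate into any $\theta$-stable maximal $F$-split torus.'' That is not what Helminck--Wang prove: \cite[Lemma 4.5(iii)]{helminck--wang1993} goes the other way (a given maximal $(\theta,F)$-split torus lies in \emph{some} $\theta$-stable maximal $F$-split torus, not necessarily $A_0$), and conjugating by a general element of $G$ destroys the property of being $(\theta,F)$-split, so you cannot transport such a torus into $A_0$ for free. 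Moreover, the dimension of the $(-1)$-part genuinely varies from one $\theta$-stable maximal $F$-split torus to another: for $\SL_2(F)$ with $\theta=\Int\diag(1,-1)$, the diagonal torus is $\theta$-stable with trivial $(\theta,F)$-split part, while the $F$-split torus $\left\{\left(\begin{smallmatrix} a & b\\ b & a\end{smallmatrix}\right)\right\}$ is $(\theta,F)$-split of dimension $1$. So computing the eigenlattice on the single torus $A_0$ does not bound the $(\theta,F)$-split rank of $G$, and your alternative route via $\theta_{x_{2n}}$ has the same defect: ``manifestly of rank $n$'' only shows maximality inside the diagonal torus.

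The paper closes precisely this gap by a different mechanism: it proves that $P_{(\underline{2})}$ is a \emph{minimal} $\theta_{x_{2n}}$-split parabolic subgroup (reducing block by block to the fact that $\GL_2(F)$ admits no proper $\theta_{x_2}$-split parabolic subgroup), hence $P_0=w_+P_{(\underline{2})}w_+^{-1}$ is a minimal $\theta$-split parabolic subgroup with Levi $M_0=C_G(S_0)$, and then invokes \cite[Proposition 4.7(iv)]{helminck--wang1993}, which links minimal $\theta$-split parabolic subgroups to centralizers of maximal $(\theta,F)$-split tori. If you prefer to avoid the parabolic machinery, a direct repair is available: any $(\theta,F)$-split torus $S\supseteq S_0$ centralizes $S_0$, hence lies in $M_0=w_+M_{(\underline{2})}w_+^{-1}\cong\prod_1^n\GL_2(F)$; since $\theta_{x_2}(g)=g/\det(g)$ on $\GL_2(F)$, the condition $\theta_{x_2}(g)=g^{-1}$ reads $g^2=\det(g)I_2$, which by Cayley--Hamilton forces $g$ to be scalar, so $S\subseteq S_0$ and $S_0$ is maximal. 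Either way, maximality requires an argument about $G$ itself (or about $C_G(S_0)$), not merely about the eigenspace decomposition of the chosen torus $A_0$.
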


\begin{proof}
Let $a = \diag(a_1,\ldots, a_{2n}) \in A_0$.
First note that 	
\begin{align*}
\theta(a) & = \diag(a_{2n}^{-1}, \ldots, a_1^{-1}).
\end{align*}
In particular, $a$ is $\theta$-split if and only if $a_{2n+1-i} = a_i$ for all $1 \leq i \leq n$.
The torus $S_0$ is the $(\theta,F)$-split component of $A_0$.  Thus, it is sufficient to show that $S_0$ is a maximal $(\theta,F)$-split torus in $G$.
To do so, we will prove that the block-upper triangular parabolic $P_{(\underline{2})}$ corresponding to the partition $(\underline{2}) = (2,\ldots,2)$ of $2n$ is a minimal $\theta_{x_{2n}}$-split parabolic, and then use the $G$-equivalence of $\theta_{x_{2n}}$ and $\theta$ to conclude that $P_0 = w_+ P_{(\underline{2})} w_+^{-1}$ is a minimal $\theta$-split parabolic subgroup of $G$.
The desired result then follows from \cite[Proposition 4.7(iv)]{helminck--wang1993}.

To see that $P_{(\underline{2})}$ is $\theta_{x_{2n}}$-split, first note that $x_{2n} \in M_{(\underline{2})}$; therefore, the block-diagonal Levi $M_{(\underline{2})}$ is $\theta_{x_{2n}}$-stable.  The unipotent radical $N_{(\underline{2})}$ of $P_{(\underline{2})}$ is mapped to the opposite unipotent radical $N_{(\underline{2})}^{\operatorname{op}}$ (with respect to $M_{(\underline{2})}$) by taking conjugate-transpose, and both $N_{(\underline{2})}$ and $N_{(\underline{2})}^{\operatorname{op}}$ are normalized by $M_{(\underline{2})}$.  It follows that $\theta_{x_{2n}}(P_{(\underline{2})}) = M_{(\underline{2})}N_{(\underline{2})}^{\operatorname{op}} = P_{(\underline{2})}^{\operatorname{op}}$ and $P_{(\underline{2})}$  is $\theta_{x_{2n}}$-split.  
It only remains to show that $P_{(\underline{2})}$ is a minimal $\theta_{x_{2n}}$-split parabolic subgroup.
Suppose that $P=MN \subsetneq P_{(\underline{2})}$ is a $\theta_{x_{2n}}$-split parabolic subgroup of $G$ that is properly contained in $P_{(\underline{2})}$.  The parabolic subgroup $P \cap M_{(\underline{2})}$ of  $M_{(\underline{2})}$ is $\theta_{x_{2n}}$-split in $M_{(\underline{2})}$.  Notice that the $\mathrm{GL}$-blocks of $M_{(\underline{2})}$ are not interchanged by $\theta_{x_{2n}}$. 
In fact, $\theta_{x_{2n}}$ restricted to $M_{(\underline{2})}$ is equal to  the product $\theta_{x_2} \times \ldots \times \theta_{x_2}$. It follows that  $P \cap M_{(\underline{2})}$ is a product of $\theta{x_2}$-split parabolic subgroups in $\GL_2(F)$. Notice that the $F$-split component of the centre of $M_{(\underline{2})}$ is $(\theta_{x_{2n}},F)$-split. By \cite[Proposition 4.7(iv)]{helminck--wang1993}, no proper parabolic subgroup of $\GL_2(F)$ can be $\theta{x_2}$-split, and it follows that $M_{(\underline{2})}$ has no proper $\theta_{x_{2n}}$-split parabolic subgroups. In particular, $P_{(\underline{2})}$ is a  minimal $\theta_{x_{2n}}$-split parabolic subgroup of $G$.
\end{proof}

The torus $S_{0,x_{2n}} = \{\diag(a_1,a_1,\ldots, a_n,a_n) : a_i \in F^\times\}$ is a maximal $(\theta_{x_{2n}},F)$-split torus of $G$, it is the $(\theta_{x_{2n}},F)$-split component of $P_{(\underline{2})}$ and the $F$-split component of $M_{(\underline{2})}$.
The torus $S_0$ is the $w_+$-conjugate of $S_{0,x_{2n}}$.  We also note explicitly that $P_0 = w_+P_{(\underline{2})}w_+^{-1}$ is $\theta$-split:
\begin{align*}
\theta(P_0) &= \theta (w_+P_{(\underline{2})}w_+^{-1}) \\
& = w_+ w_+^{-1} \theta (w_+P_{(\underline{2})}w_+^{-1}) w_+ w_+^{-1} \\
& = w_+ \theta_{x_{2n}}(P_{(\underline{2})}) w_+^{-1} \\
& =  w_+ (P_{(\underline{2})}^{\operatorname{op}}) w_+^{-1} \\
& = P_0^{\operatorname{op}},
\end{align*}
where the opposite is taken with respect to the $\theta$-stable Levi factor $M_0 = w_+ M_{(\underline{2})} w_+^{-1}$.
Let $N_0 = w_+ N_{(\underline{2})} w_+^{-1}$ denote the unipotent radical of $P_0$.
We emphasize that $P_0=M_0N_0$ is a minimal $\theta$-split parabolic subgroup of $G$.
\subsection{The restricted root system and $\theta$-split parabolic subgroups}
\begin{defn}
Let $\Delta$ be a base of a root system $\Phi$.
The $\Delta$-positive (respectively, $\Delta$-negative) roots in $\Phi$ consist of the collection of positive (respectively, negative) roots in $\Phi$ with respect to $\Delta$; in particular, the set of $\Delta$-positive roots is equal to $\Phi \cap \spn_{\Z_{\geq 0}} \Delta$.
\end{defn}

Let $\Phi_0 = \Phi(\G, \Abf_0)$ be the root system of $G$ with respect to $A_0$.  
Since $A_0$ is $\theta$-stable, the involution $\theta$ acts on $X^*(A_0)$ and $\Phi_0$ is $\theta$-stable under this action.
Let $\Delta = \{ \epsilon_i - \epsilon_{i+1} : 1 \leq i \leq 2n-1\}$ be the standard base for $\Phi_0$, where $\epsilon_i$ denotes the $i$-{th} $F$-rational coordinate character of $A_0$.
Define $\Delta_0 = w_+\Delta$ to be the Weyl group translate of $\Delta$ by the permutation matrix $w_+ \in W_0$, where $W_0 \cong N_G(A_0) / A_0$ is the Weyl group of $G$ (with respect to $A_0$).  We identify $W_0$ with the subgroup of $G$ consisting of all permutation matrices.

\begin{lem}
The set $\Phi_0^\theta$ of $\theta$-fixed roots in $\Phi_0$ is equal to the set
\begin{align*}
\Phi_0^\theta & = \{ \epsilon_i - \epsilon_{2n+1-i} : 1 \leq i \leq 2n \},	
\end{align*}
corresponding to the root spaces on the main anti-diagonal in $\gl_{2n}$.
\end{lem}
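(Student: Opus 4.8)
The plan is to read off the action of $\theta$ on the character lattice $X^*(A_0)$ from the already-established description of $\theta$ on the diagonal torus, and then simply solve the fixed-point equation $\theta\cdot\alpha=\alpha$ on roots. First I would invoke the computation carried out in the proof of \Cref{lem-split-torus}: for $a=\diag(a_1,\ldots,a_{2n})\in A_0$ one has
\begin{align*}
\theta(a) &= \diag(a_{2n}^{-1},a_{2n-1}^{-1},\ldots,a_1^{-1}),
\end{align*}
which exhibits $A_0$ as $\theta$-stable. Dualizing, since $\theta$ restricts to an involution of $A_0$, the induced involution on $X^*(A_0)$ is $\chi\mapsto \chi\circ\theta$, and evaluating on $\diag(a_1,\ldots,a_{2n})$ gives $\theta\cdot\epsilon_i=-\epsilon_{2n+1-i}$ for every $1\leq i\leq 2n$, where $\epsilon_i$ is the $i$-th coordinate character. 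Consequently, for a root $\alpha=\epsilon_i-\epsilon_j\in\Phi_0$ (with $i\neq j$) we obtain $\theta\cdot\alpha=\epsilon_{2n+1-j}-\epsilon_{2n+1-i}$, which again lies in $\Phi_0$; this reconfirms that $\Phi_0$ is $\theta$-stable.

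Next I would impose $\theta\cdot\alpha=\alpha$. The identity $\epsilon_{2n+1-j}-\epsilon_{2n+1-i}=\epsilon_i-\epsilon_j$ holds in $X^*(A_0)$ precisely when $2n+1-j=i$ (equivalently $2n+1-i=j$), i.e.\ when $i+j=2n+1$. Since $2n+1$ is odd, the constraint $i\neq j$ is then automatic, and the map $i\mapsto 2n+1-i$ pairs the index set bijectively. Therefore
\begin{align*}
\Phi_0^\theta &= \{\,\epsilon_i-\epsilon_{2n+1-i} : 1\leq i\leq 2n\,\},
\end{align*}
and these $2n$ characters are pairwise distinct, as claimed.

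It remains to make the geometric statement precise: the $A_0$-root subspace of $\gl_{2n}$ attached to $\epsilon_i-\epsilon_{2n+1-i}$ is spanned by the elementary matrix $E_{i,2n+1-i}$, whose unique nonzero entry sits in position $(i,2n+1-i)$, that is, on the main anti-diagonal; as $i$ runs over $1,\ldots,2n$ these fill out exactly the anti-diagonal. I do not expect any genuine obstacle here — the proof is a direct computation — so the only points deserving a word of care are fixing the sign convention for the $\theta$-action on characters so that it is compatible with $\theta(a)=\diag(a_{2n}^{-1},\ldots,a_1^{-1})$, and noting that the assertion is intrinsic to the triple $(\G,\Abf_0,\theta)$ and in particular does not depend on the choice of base $\Delta_0=w_+\Delta$.
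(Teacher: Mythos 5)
Your proof is correct and follows essentially the same route as the paper: compute $\theta(\epsilon_i-\epsilon_j)=\epsilon_{2n+1-j}-\epsilon_{2n+1-i}$ from the explicit action of $\theta$ on the diagonal torus and solve the fixed-point equation, which forces $j=2n+1-i$. Your added remarks (deriving $\theta\cdot\epsilon_i=-\epsilon_{2n+1-i}$ from \Cref{lem-split-torus} and identifying the root spaces with the anti-diagonal entries) are just slightly more detailed versions of what the paper leaves implicit.
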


\begin{proof}
For any $1 \leq i \neq j \leq 2n$, we have that
$\theta(\epsilon_i - \epsilon_j)  = \epsilon_{2n+1-j} - \epsilon_{2n+1-i}$.
Note that $2n+1 - (2n+1 - i) = i$; therefore,
the root $\epsilon_i - \epsilon_j$ is $\theta$-fixed if and only if $j=2n+1-i$.
\end{proof}

\begin{lem}\label{lem-theta-base}
The set of simple roots $\Delta_0= w_+\Delta$ is a $\theta$-base of $\Phi_0$.
\end{lem}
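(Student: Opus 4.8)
The plan is to make the base $\Delta_0 = w_+\Delta$ completely explicit and then verify \Cref{defn-theta-base} directly, root by root. Write $\sigma$ for the permutation of $\{1,\ldots,2n\}$ attached to $w_+$, so that $\sigma(2k-1)=k$ and $\sigma(2k)=2n+1-k$ for $1\le k\le n$. Since the positive system determined by $w_+\Delta$ is the $w_+$-translate of the standard positive system, and the Weyl action is $w_+\cdot\epsilon_i=\epsilon_{\sigma(i)}$, I first record that $\Delta_0=\{\epsilon_{\sigma(i)}-\epsilon_{\sigma(i+1)}:1\le i\le 2n-1\}$ and $\Phi_0^+=\{\epsilon_{\sigma(a)}-\epsilon_{\sigma(b)}:a<b\}$. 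It is then convenient to relabel the coordinate characters by setting $f_i:=\epsilon_{\sigma(i)}$, i.e.\ $f_{2k-1}=\epsilon_k$ and $f_{2k}=\epsilon_{2n+1-k}$; in these coordinates $\Delta_0$ becomes the ``standard'' base $\{f_i-f_{i+1}\}_{i=1}^{2n-1}$ and $\Phi_0^+=\{f_a-f_b:a<b\}$.

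Next I would translate the action of $\theta$ into the $f$-coordinates. From the computation $\theta(\diag(a_1,\ldots,a_{2n}))=\diag(a_{2n}^{-1},\ldots,a_1^{-1})$ in the proof of \Cref{lem-split-torus} one has $\theta\cdot\epsilon_i=-\epsilon_{2n+1-i}$, and hence $\theta\cdot f_{2k-1}=-f_{2k}$ and $\theta\cdot f_{2k}=-f_{2k-1}$. In other words $\theta\cdot f_a=-f_{\tau(a)}$, where $\tau$ is the involution of $\{1,\ldots,2n\}$ that interchanges $2k-1$ and $2k$ for each $k$; therefore $\theta(f_a-f_b)=f_{\tau(b)}-f_{\tau(a)}$. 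I would also note, consistently with the description of $\Phi_0^\theta$ obtained just above, that the $\theta$-fixed positive roots are precisely the ``same-block'' roots $f_{2k-1}-f_{2k}=\epsilon_k-\epsilon_{2n+1-k}$.

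Finally I would check the defining condition. Let $\alpha=f_a-f_b\in\Phi_0^+$, so $a<b$, and suppose $\theta(\alpha)\neq\alpha$; the claim is that $\theta(\alpha)=f_{\tau(b)}-f_{\tau(a)}\in\Phi_0^-$, i.e.\ $\tau(a)<\tau(b)$. If $a$ and $b$ lie in different blocks $\{2k-1,2k\}$ and $\{2l-1,2l\}$, then $a<b$ forces $k<l$ (otherwise $a\ge 2l+1>2l\ge b$), so $\tau(a)\le 2k<2l-1\le\tau(b)$ and $\theta(\alpha)$ is negative. If $a$ and $b$ lie in the same block, then $a=2k-1$ and $b=2k$, whence $\theta(\alpha)=f_{2k-1}-f_{2k}=\alpha$, contradicting the hypothesis; so this case does not occur. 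These cases are exhaustive, proving that $\Delta_0$ is a $\theta$-base.

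The only real care needed is the bookkeeping in writing down $\Delta_0=w_+\Delta$ and the induced reindexing correctly, which is why the argument passes to the $f$-coordinates before invoking $\theta$; once that is done, the $\theta$-base condition is immediate from the block structure of $\tau$, and I do not anticipate any substantive obstacle. (Alternatively, the statement could be deduced from the fact established above that $P_0=w_+P_{(\underline{2})}w_+^{-1}$ is a minimal $\theta$-split parabolic in $\Delta_0$-standard form, together with \cite[Theorem 2.9]{helminck--helminck1998}; but the direct combinatorial check is cleaner and self-contained.)
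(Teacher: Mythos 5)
Your proof is correct and follows essentially the same route as the paper: a direct verification of \Cref{defn-theta-base} by computing the action of $\theta$ on the $\Delta_0$-positive roots, using $\theta\cdot\epsilon_i=-\epsilon_{2n+1-i}$ and $\Phi_0^+=w_+\Phi_\Delta^+$. The only difference is organizational: your relabeling $f_i=\epsilon_{\sigma(i)}$ collapses the paper's four parity cases into a clean two-case (same block versus different blocks) check, but the underlying computation is identical.
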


\begin{proof}
The set $\Phi_0^+$ of $\Delta_0$-positive roots is equal to $w_+ \Phi_{\Delta}^+$, where $\Phi_{\Delta}^+$ is the set of $\Delta$-positive roots.
Moreover, the set of $\Delta_0$-negative roots in $\Phi_0$ is $\Phi_0^- = - \Phi_0^+ =  w_+ \Phi_\Delta^-$.
Let $\alpha = \epsilon_i - \epsilon_j \in \Phi_\Delta^+$, that is, $1 \leq i < j \leq 2n$ and $w_+\alpha \in \Phi_0^+$. 
Suppose that $w_+\alpha$ is not $\theta$-fixed. 
Note that $w_+ \epsilon_i = \epsilon_{w_+(i)}$ and thus
\begin{align*}
\theta(w_+(\epsilon_i-\epsilon_j)) & = \epsilon_{2n+1-w_+(j)} - \epsilon_{2n+1-w_+(i)}	
\end{align*}
We consider the image of $w_+ \alpha$ under $\theta$ in the following four cases.
\begin{description}
	\item[Case (i): $i,j$ both odd] We can write $i=2k-1$ and $j=2l-1$ with $1\leq k < l \leq n$.  It follows that
	\begin{align*}
\theta(w_+\alpha) 
	& = \epsilon_{2n+1-l} - \epsilon_{2n+1-k}	
	 = w_+ ( \epsilon_{2l} - \epsilon_{2k}	);
\end{align*}
moreover, since $2l > 2k$, we have that $ w_+ ( \epsilon_{2l} - \epsilon_{2k}	) \in \Phi_0^-$.
\item[Case (ii): $i$ odd, $j$ even] 
	Let $i=2k-1$ and $j=2l$ with $1\leq k \leq l \leq n$.  As above,
	\begin{align*}
\theta(w_+\alpha) 
	& = \epsilon_{l} - \epsilon_{2n+1-k}	 
	 = w_+ ( \epsilon_{2l-1} - \epsilon_{2k}	);
\end{align*}
Observe that $k \neq l$, since otherwise $w_+\alpha = \theta(w_+\alpha) \in \Phi_0^\theta$ and we have assumed that $w_+\alpha$ is not $\theta$-fixed.
Since $l > k$, we have $2l-1>2k$ and $ w_+ ( \epsilon_{2l} - \epsilon_{2k}	) \in \Phi_0^-$.
	\item[Case (iii): $i$ even, $j$ odd] Let $i=2k$ and $j=2l-1$ where $1\leq k < l \leq n$.  It follows that
	\begin{align*}
\theta(w_+\alpha) 
	& = \epsilon_{2n+1-l} - \epsilon_{2n+1-(2n+1-k)}	 
	 = w_+ ( \epsilon_{2l} - \epsilon_{2k-1}	);
\end{align*}
moreover, since $l > k$, we have $2l>2k-1$ and $w_+ ( \epsilon_{2l} - \epsilon_{2k-1}	) \in \Phi_0^-$.
	\item[Case (iv): $i,j$ both even]	Let $i=2k$ and $j=2l$ for $1\leq k < l \leq n$.  We have
	\begin{align*}
\theta(w_+(\epsilon_{2k}-\epsilon_{2l})) 
	& = \epsilon_{2n+1-(2n+1-l)} - \epsilon_{2n+1-(2n+1-k)}	
	 = w_+ ( \epsilon_{2l-l} - \epsilon_{2k-1}	);
\end{align*}
moreover, since $l > k$, we have that $2l-1>2k-1$ and $w_+ ( \epsilon_{2l-1} - \epsilon_{2k-1}	) \in \Phi_0^-$.
\end{description}
It follows that if $\beta \in \Phi_0^+$ is not $\theta$-fixed, then $\theta(\beta) \in \Phi_0^-$; therefore, $\Delta_0$ is a $\theta$-base of $\Phi_0$.
\end{proof}

\begin{obs}
From the proof of \Cref{lem-theta-base}, we see that the set of $\theta$-fixed $\Delta_0$-positive roots are the translates of $\{\epsilon_1 - \epsilon_2, \epsilon_3 - \epsilon_4, \ldots, \epsilon_{2n-1} - \epsilon_{2n}\}$ by $w_+$.	The subset $\{\epsilon_1 - \epsilon_2, \epsilon_3 - \epsilon_4, \ldots, \epsilon_{2n-1} - \epsilon_{2n}\}$ of $\Delta$ consists of $\theta_{x_{2n}}$-fixed roots and determines the (minimal $\theta_{x_{2n}}$-split) parabolic subgroup $P_{(\underline{2})}$.
\end{obs}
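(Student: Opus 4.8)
The plan is to obtain the observation as a direct bookkeeping consequence of the two preceding lemmas, with no new input required. For the first assertion I would simply re-read the four-case analysis in the proof of \Cref{lem-theta-base}: every $\Delta_0$-positive root is uniquely $w_+\alpha$ for some $\alpha = \epsilon_i - \epsilon_j \in \Phi_\Delta^+$ with $1 \leq i < j \leq 2n$, and that analysis produces $\theta(w_+\alpha) \in \Phi_0^-$ in Cases (i), (iii), (iv), and in Case (ii) whenever $k \neq l$; the only surviving possibility is Case (ii) with $k = l$, where $\alpha = \epsilon_{2k-1} - \epsilon_{2k}$ and $w_+\alpha$ is $\theta$-fixed. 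Hence the $\theta$-fixed $\Delta_0$-positive roots are exactly the $w_+$-translates of $\{\epsilon_1 - \epsilon_2, \epsilon_3 - \epsilon_4, \ldots, \epsilon_{2n-1} - \epsilon_{2n}\}$, and I would cross-check this against the earlier description of $\Phi_0^\theta$: that set has $2n$ elements occurring in $n$ sign-pairs, and we have just exhibited exactly one representative from each pair as $\Delta_0$-positive.

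For the second assertion I would compute the action of $\theta_{x_{2n}}$ on $A_0$ by hand. Recalling from \Cref{sec-tori-pblc} that $x_{2n} = \ve_{2n} \cdot w_+$ is the block-diagonal matrix with $n$ blocks $\left(\begin{smallmatrix} 0 & 1 \\ -1 & 0 \end{smallmatrix}\right)$, conjugation by $x_{2n}$ swaps the two diagonal entries inside each pair $\{2k-1, 2k\}$, so that $\theta_{x_{2n}}\big(\diag(a_1, a_2, \ldots, a_{2n})\big) = \diag(a_2^{-1}, a_1^{-1}, a_4^{-1}, a_3^{-1}, \ldots, a_{2n}^{-1}, a_{2n-1}^{-1})$; on characters this reads $\theta_{x_{2n}}^*(\epsilon_{2k-1}) = -\epsilon_{2k}$ and $\theta_{x_{2n}}^*(\epsilon_{2k}) = -\epsilon_{2k-1}$ for $1 \leq k \leq n$. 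It then follows immediately that each $\epsilon_{2k-1} - \epsilon_{2k}$ is $\theta_{x_{2n}}$-fixed, whereas none of the simple roots $\epsilon_{2k} - \epsilon_{2k+1}$ is, so $\Delta^{\theta_{x_{2n}}} = \{\epsilon_{2k-1} - \epsilon_{2k} : 1 \leq k \leq n\}$.

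To finish I would identify the standard parabolic subgroup cut out by $\Delta^{\theta_{x_{2n}}}$ with $P_{(\underline{2})}$: the connected common kernel in $A_0$ of the characters $\epsilon_{2k-1} - \epsilon_{2k}$ is the torus $S_{0, x_{2n}} = \{\diag(a_1, a_1, \ldots, a_n, a_n) : a_i \in F^\times\}$, whose centralizer in $\GL_{2n}(F)$ is the block-diagonal Levi $M_{(\underline{2})}$, so $P_{\Delta^{\theta_{x_{2n}}}}$ has Levi factor $M_{(\underline{2})}$ and therefore equals $P_{(\underline{2})}$. That $P_{(\underline{2})}$ is indeed the minimal $\theta_{x_{2n}}$-split parabolic has already been established in the proof of \Cref{lem-split-torus}; alternatively one checks --- exactly as in \Cref{lem-theta-base}, using that $\theta_{x_{2n}}$ merely permutes coordinates within each pair $\{2k-1, 2k\}$ --- that $\Delta$ is a $\theta_{x_{2n}}$-base, and then quotes \cite[Theorem 2.9]{helminck--helminck1998}.

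I do not anticipate a genuine obstacle: the statement is essentially a restatement of structural information already extracted in the proofs of \Cref{lem-split-torus} and \Cref{lem-theta-base}. The only step warranting a moment of care is the last one, where one should be careful to work with the base $\Delta$ rather than $\Delta_0$ --- either by appealing directly to \Cref{lem-split-torus} for the minimality of $P_{(\underline{2})}$, or by first verifying that $\Delta$ is a $\theta_{x_{2n}}$-base before invoking the structure theory that lets $\Delta^{\theta_{x_{2n}}}$ determine the standard minimal $\theta_{x_{2n}}$-split parabolic subgroup.
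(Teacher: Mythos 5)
Your proposal is correct and follows essentially the same route as the paper: the observation is just read off from the four-case computation in the proof of \Cref{lem-theta-base} (the only $\theta$-fixed $\Delta_0$-positive roots being the Case (ii), $k=l$ roots $w_+(\epsilon_{2k-1}-\epsilon_{2k})$), combined with the identification of $P_{(\underline{2})}$ as the minimal $\theta_{x_{2n}}$-split parabolic already established in \Cref{lem-split-torus}. Your extra direct check that $\theta_{x_{2n}}$ sends $\epsilon_{2k-1}\mapsto-\epsilon_{2k}$ and your care to work with the base $\Delta$ (rather than $\Delta_0$) for the $\theta_{x_{2n}}$-picture are exactly the intended bookkeeping.
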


To aid in our understanding of the structure of $\Delta_0$, we partition the roots in the standard base $\Delta$ into the disjoint subsets
\begin{align*}
\Delta_{\operatorname{odd}} & = \{\epsilon_{2i-1} - \epsilon_{2i} : 1 \leq i \leq n\}
\end{align*}
and
\begin{align*}
\Delta_{\operatorname{even}} & = \{ \epsilon_{2j}-\epsilon_{2j+1} : 1\leq j \leq n-1 \}.	
\end{align*}
 Notice that the set of $\theta$-fixed simple roots in $\Delta_0$ is equal to $\Delta_0^\theta = w_+ \Delta_{\operatorname{odd}}$.
Moreover, $\Delta_0$ is the disjoint union $\Delta_0 = \Delta_0^\theta \sqcup w_+  \Delta_{\operatorname{even}}$.
Explicitly,
\begin{align*}
\Delta_0^\theta & = w_+  \Delta_{\operatorname{odd}} = \{ \epsilon_i - \epsilon_{2n+1-i} : 1 \leq i \leq n\} 
\end{align*}
and
\begin{align*}
w_+ \Delta_{\operatorname{even}} & = \{ \epsilon_{2n+1-j} - \epsilon_{j+1} : 1 \leq j \leq n-1 \}.	
\end{align*}

Let $r: X^*(A_0) \rightarrow X^*(S_0)$ be the surjective homomorphism defined by restricting $F$-rational characters of $A_0$ to $S_0$. The $\theta$-fixed simple roots are trivial on $S_0$.  It follows that
\begin{align*}
\overline\Delta_0 & = r(\Delta_0 \setminus \Delta_0^\theta) = r(w_+ \Delta_{\operatorname{even}}) = \{ \bar\epsilon_i - \bar\epsilon_{i+1} : 1 \leq i \leq n-1 \},
\end{align*}
where $\bar\epsilon_i$ is the $i$-{th} $F$-rational coordinate character of $S_0$ given by
\begin{align*}
\bar\epsilon_i(\diag(a_1,\ldots,a_n,a_n, \ldots, a_1)) & = a_i,	
\end{align*}
for $1 \leq i \leq n$. In addition, the full set of restricted roots is
\begin{align*}
\overline \Phi_0 &= r(\Phi_0) \setminus \{0\} 
 = r(\Phi_0 \setminus \Phi_0^\theta)
 = \{ \bar \epsilon_{i} - \bar \epsilon_{j}  : 1 \leq i \neq j \leq n \}.	
\end{align*}

We have established the following.

\begin{lem}\label{lem-rest-roots-dual-gp}
The restricted root system associated to $\Sp_{2n}(F) \backslash \GL_{2n}(F)$ is of type ${\rm{A}}_{n-1}$ and the dual group $G_{X}^\vee$ of $X=\Sp_{2n}(F) \backslash \GL_{2n}(F)$ is $\mathrm{GL}(n,\C)$.	
\end{lem}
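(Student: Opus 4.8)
The plan is to read the statement off the explicit description of the restricted root system obtained in the preceding paragraphs and then appeal to the Sakellaridis--Venkatesh construction of the dual group of a spherical variety.

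First I would recall that the computations carried out just above give
\[
\overline\Phi_0 = \{\, \bar\epsilon_i - \bar\epsilon_j : 1 \le i \ne j \le n \,\}
\qquad\text{and}\qquad
\overline\Delta_0 = \{\, \bar\epsilon_i - \bar\epsilon_{i+1} : 1 \le i \le n-1 \,\},
\]
where $\bar\epsilon_1,\ldots,\bar\epsilon_n$ are the coordinate characters of the rank-$n$ torus $S_0$ of \Cref{lem-split-torus}. This is the standard realization of the root system of type $\mathrm{A}_{n-1}$ with base $\overline\Delta_0$: the simple roots form a chain, so the Dynkin diagram is $\mathrm{A}_{n-1}$, and $\overline\Phi_0$ is reduced, since no integer multiple $c\,(\bar\epsilon_i - \bar\epsilon_j)$ with $c>1$ can have all coefficients $\pm 1$ and hence cannot belong to $\overline\Phi_0$. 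In particular the hypothesis of \cite[Proposition 2.2.2]{sakellaridis--venkatesh2017} on the spherical roots holds, so the dual group $G_X^\vee$ is defined.

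Second I would identify $G_X^\vee$. Since $X = \Sp_{2n}(F)\backslash\GL_{2n}(F)$ is a symmetric space, its rank equals $\dim S_0 = n$, and its normalized spherical roots are the simple restricted roots $\bar\epsilon_i - \bar\epsilon_{i+1}$; the root datum prescribed by \cite[Section 3]{sakellaridis--venkatesh2017} (see also \cite{knop--schalke2017}) is therefore that of a connected reductive complex group of rank $n$ whose root system is $\mathrm{A}_{n-1}$ embedded in $X^*(S_0)\otimes\R$ as above, and this is precisely the root datum of $\mathrm{GL}(n,\C)$. Concretely, this is the worked example \cite[Example 1.3.2]{sakellaridis--venkatesh2017}, where $G_X^\vee = \mathrm{GL}(n,\C)$ and the distinguished morphism is $\mathrm{std}\otimes\style{S}(2)$. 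I expect the only delicate point to be pinning down the lattice part of the root datum so as to obtain $\mathrm{GL}(n,\C)$ rather than a group with the same Lie algebra but a different centre (for instance $\SL(n,\C)\times\mathrm{GL}(1,\C)$); this reduces to observing that the character lattice of the spherical Cartan is the full lattice $X^*(S_0)\cong\Z^n$ with the $\mathrm{A}_{n-1}$ roots sitting inside it as the $\bar\epsilon_i - \bar\epsilon_{i+1}$, which is exactly what the displayed formulas record, equivalently the content of \cite[Example 1.3.2]{sakellaridis--venkatesh2017}.
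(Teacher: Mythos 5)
Your proposal matches the paper's own treatment: the lemma is stated immediately after the explicit computation of $\overline\Phi_0$ and $\overline\Delta_0$ (the paper simply says ``We have established the following''), with the identification $G_X^\vee = \mathrm{GL}(n,\C)$ read off from the rank-$n$ type ${\rm A}_{n-1}$ restricted root datum via the Sakellaridis--Venkatesh construction, exactly as you do (including the appeal to \cite[Example 1.3.2]{sakellaridis--venkatesh2017}). Your extra remarks on reducedness and on pinning down the character lattice are a harmless refinement of the same argument.
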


Proper ($\Delta_0$-)standard $\theta$-split parabolic subgroups of $G$ are parametrized by proper $\theta$-split subsets of $\Delta_0$, where a subset $\Theta$ of $\Delta_0$ is $\theta$-split if it is of the form
\begin{align*}
\Theta = [\overline\Theta]:=r^{-1}(\overline\Theta)\cup \Delta_0^\theta,	
\end{align*}
and  $\overline\Theta$ is  a subset of $\overline \Delta_0$.
The subset $\Delta_0^\theta$ of $\theta$-fixed simple roots determines the minimal standard $\theta$-split parabolic $P_0 = M_0 N_0$ of $G$, with Levi factor $M_0 = C_G(S_0)$ and unipotent radical $N_0$.
By \cite[Lemma 2.5]{kato--takano2008}, any $\theta$-split parabolic subgroup of $G$ is $(\Hbf \Mbf_0)(F)$-conjugate to a standard $\theta$-split parabolic.
In the current setting, the Galois cohomology of $\Mbf_0 \cap \Hbf$ over $F$ is trivial and it follows that $(\Hbf \Mbf_0)(F) = HM_0$; moreover, any $\theta$-split parabolic subgroup is $H$-conjugate to a standard $\theta$-split parabolic subgroup.
For completeness, we give a proof.

\begin{lem}\label{lem-trivial-cohom}
The first Galois cohomology of $\Mbf_0 \cap \Hbf$ over $F$ is trivial and $(\Hbf \Mbf_0)(F) = HM_0$.
\end{lem}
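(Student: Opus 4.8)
The plan is to deduce both assertions from the standard relationship between the $F$-points of a product of algebraic subgroups and the Galois cohomology of their intersection. First I would let $\Hbf\times\Mbf_0$ act on $\G$ by $(h,m)\cdot g = hgm^{-1}$; then $\Hbf\Mbf_0$ is the orbit of the identity, hence a smooth locally closed subvariety of $\G$, and the orbit map identifies it with the quotient $(\Hbf\times\Mbf_0)/(\Hbf\cap\Mbf_0)$, where $\Hbf\cap\Mbf_0$ is embedded diagonally as the stabilizer of the identity. Equivalently, the product map $(h,m)\mapsto hm$ exhibits $\Hbf\times\Mbf_0$ as an $(\Hbf\cap\Mbf_0)$-torsor over $\Hbf\Mbf_0$. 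Passing to $F$-points yields an exact sequence of pointed sets
\begin{align*}
H\times M_0 \longrightarrow (\Hbf\Mbf_0)(F) \overset{\partial}{\longrightarrow} H^1(F,\Hbf\cap\Mbf_0),
\end{align*}
in which an $F$-point $x\in(\Hbf\Mbf_0)(F)$ lies in $HM_0$ precisely when $\partial(x)$ is the trivial class. Thus the whole statement will follow once I show that $H^1(F,\Mbf_0\cap\Hbf)$ is trivial: then every $F$-point of $\Hbf\Mbf_0$ lifts to $H\times M_0$, giving $(\Hbf\Mbf_0)(F)=HM_0$, the reverse inclusion being obvious.

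Next I would compute $\Mbf_0\cap\Hbf$. By \Cref{sec-tori-pblc}, $\Mbf_0 = w_+\Mbf_{(\underline 2)}w_+^{-1}$ with $w_+$ a permutation matrix, hence defined over the prime field, so conjugation by $w_+$ is an $F$-isomorphism; and since $\theta_{x_{2n}} = w_+\cdot\theta$, conjugation by $w_+$ carries $\theta|_{\Mbf_0}$ to $\theta_{x_{2n}}|_{\Mbf_{(\underline 2)}}$, which by the proof of \Cref{lem-split-torus} is the product involution $\theta_{x_2}\times\cdots\times\theta_{x_2}$ on $\Mbf_{(\underline 2)}\cong\GL_2^{\,n}$. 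Hence $\Mbf_0\cap\Hbf=(\Mbf_0)^\theta\cong(\GL_2^{\theta_{x_2}})^{\,n}$. A one-line $2\times2$ computation gives $\tran{g}\,x_2\,g = (\det g)\,x_2$ for every $g\in\GL_2$, so $\GL_2^{\theta_{x_2}}=\{g:\tran{g}x_2 g = x_2\}=\Sp_2=\mathbf{SL}_2$, and therefore $\Mbf_0\cap\Hbf$ is $F$-isomorphic to $\mathbf{SL}_2^{\,n}$.

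Finally I would invoke the fact that $\mathbf{SL}_m$ is a special group: the exact sequence $1\to\mathbf{SL}_m\to\GL_m\overset{\det}{\to}\GL_1\to1$ together with $H^1(K,\GL_m)=H^1(K,\GL_1)=1$ for every field $K$ (Hilbert's Theorem 90) forces $H^1(K,\mathbf{SL}_m)=1$. This gives $H^1(F,\Mbf_0\cap\Hbf)\cong H^1(F,\mathbf{SL}_2)^{\,n}=1$, which by the first paragraph completes the proof. I expect the only genuinely delicate point to be pinning down $\Mbf_0\cap\Hbf$ and the restriction of $\theta$ to it --- but this is essentially already encoded in the description of $\theta_{x_{2n}}$ on $\Mbf_{(\underline 2)}$ recorded in \Cref{sec-tori-pblc} --- along with the (routine) verification that $\Hbf\Mbf_0$ is a subvariety, so that the torsor/cohomology exact sequence is available.
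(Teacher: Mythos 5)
Your proposal is correct and follows essentially the same route as the paper: identify $\Mbf_0\cap\Hbf$ with $\prod_1^n\mathbf{SL}_2$ via $w_+$-conjugation and the product involution $\theta_{x_2}\times\cdots\times\theta_{x_2}$, deduce $H^1(F,\Mbf_0\cap\Hbf)=1$ from Hilbert's Theorem 90, and conclude $(\Hbf\Mbf_0)(F)=HM_0$ from the exact sequence of pointed sets attached to the quotient by the diagonal $\Hbf\cap\Mbf_0$. You merely spell out the torsor structure and the $\mathbf{SL}_m$ step a bit more explicitly than the paper does.
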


\begin{proof}
First, one may readily verify that
\begin{align*}
\Mbf_0 \cap \Hbf & = w_+ \left(\Mbf_{(\underline{2})} \cap \G^{\theta_{x_{2n}}}\right) w_+^{-1} \cong \prod_1^n \mathbf{SL}_2.
\end{align*}
By Hilbert's Theorem 90, it follows that
\begin{align*}
H^1(\Mbf_0 \cap \Hbf, F) & \cong \oplus_1^n H^1(\mathbf{SL}_2,F) = 0.	
\end{align*}
Let $\bar F$ denote the algebraic closure of $F$.
By considering the long exact sequence in Galois cohomology obtained from the short exact sequence
\begin{align*}
1 \rightarrow \Mbf_0(\bar F) \cap \Hbf(\bar F) \rightarrow  \Hbf(\bar F) \times \Mbf_0(\bar F) \rightarrow \Hbf(\bar F) \Mbf_0(\bar F) \rightarrow 1	
\end{align*}
of pointed sets, it follows that $(\Hbf \Mbf_0)(F) = HM_0$, as claimed.
\end{proof}

\begin{prop}\label{cor-H-cong-pblc}
Let $P$ be a $\theta$-split parabolic subgroup of $G$.
There exists a $\theta$-split subset $\Theta$ of $\Delta_0$ and an element $h\in H$ such that $P = h P_\Theta h^{-1}$.
Moreover, $P$ has unipotent radical $N = hN_\Theta h^{-1}$ and $\theta$-stable Levi factor $M = hM_\Theta h^{-1}$.
\end{prop}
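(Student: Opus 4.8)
The plan is to deduce this directly from \Cref{KT08-lem-2.5}(1) together with the cohomological computation in \Cref{lem-trivial-cohom}. By \Cref{KT08-lem-2.5}(1), there is an element $g \in (\Hbf\Mbf_0)(F)$ and a $\theta$-split subset $\Theta$ of $\Delta_0$ such that $P = g P_\Theta g^{-1}$. The role of \Cref{lem-trivial-cohom} is precisely that $(\Hbf\Mbf_0)(F) = HM_0$, so we may write $g = hm$ with $h \in H$ and $m \in M_0$.

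First I would absorb the $M_0$-factor. Since $\Theta$ is $\theta$-split it contains $\Delta_0^\theta$ by definition, so $M_0 = M_{\Delta_0^\theta} \subseteq P_{\Delta_0^\theta} \subseteq P_\Theta$; hence $m$ normalizes $P_\Theta$ and
$P = h\,(m P_\Theta m^{-1})\,h^{-1} = h P_\Theta h^{-1}$, as required. The claim about the unipotent radical is then immediate: $h N_\Theta h^{-1}$ is a connected normal unipotent subgroup of $P = h P_\Theta h^{-1}$ with reductive quotient isomorphic to $M_\Theta$, hence it is the unipotent radical $N$ of $P$.

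For the Levi factor I would use that $h \in H = G^\theta$, so $\theta(h) = h$ and therefore $\theta(P) = \theta(h)\,\theta(P_\Theta)\,\theta(h)^{-1} = h\,\theta(P_\Theta)\,h^{-1}$. Since $\Theta$ is $\theta$-split, $P_\Theta$ is $\theta$-split in the sense of \Cref{defn-theta-split-pblc}, so $\theta(P_\Theta)$ is opposite to $P_\Theta$ and $P_\Theta \cap \theta(P_\Theta)$ is a $\theta$-stable Levi factor of $P_\Theta$; as both $M_\Theta$ and $P_\Theta \cap \theta(P_\Theta)$ are Levi factors of $P_\Theta$ containing the $\theta$-stable maximal split torus $A_0$, they coincide, so $M_\Theta = P_\Theta \cap \theta(P_\Theta)$. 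Intersecting gives $M = P \cap \theta(P) = h\big(P_\Theta \cap \theta(P_\Theta)\big)h^{-1} = h M_\Theta h^{-1}$.

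This argument is essentially bookkeeping once \Cref{KT08-lem-2.5} and \Cref{lem-trivial-cohom} are available; the only points needing a little care are the absorption of the $M_0$-factor, where one must check the inclusion $M_0 \subseteq P_\Theta$ (which holds because every $\theta$-split subset of $\Delta_0$ contains $\Delta_0^\theta$), and the identification of the $\theta$-stable Levi, where one must use that $h$ is $\theta$-fixed so that conjugation by $h$ commutes with $\theta$.
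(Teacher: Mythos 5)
Your proof is correct and follows essentially the same route as the paper, which simply combines \Cref{lem-trivial-cohom} with \cite[Lemma 2.5]{kato--takano2008}; the only difference is that you invoke part (1) of \Cref{KT08-lem-2.5} and then re-derive the $H$-conjugacy conclusion of part (2) by writing $g=hm$ and absorbing $m\in M_0\subseteq P_\Theta$, which is exactly the bookkeeping the cited lemma encapsulates. The added details on the unipotent radical and on $M_\Theta=P_\Theta\cap\theta(P_\Theta)$ are sound.
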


\begin{proof}
Apply \Cref{lem-trivial-cohom}  and \cite[Lemma 2.5]{kato--takano2008}.	
\end{proof}

With the last result in hand, we explicitly determine the maximal proper standard $\theta$-split parabolic subgroups of $G$ which correspond to the maximal proper $\theta$-split subsets of $\Delta_0$.
A maximal proper $\theta$-split subset of $\Delta_0$ has the form $[\overline\Delta_0\setminus \{\bar\alpha\}] = r^{-1}(\overline\Delta_0\setminus \{\bar\alpha\}) \cup \Delta_0^\theta$, where $\bar\alpha \in \overline\Delta_0$.
Observe that for each $\bar\alpha \in \overline{\Delta}_0$ there is a unique $\alpha \in w_+\Delta_{\operatorname{even}}$ such that $r(\alpha) = \bar \alpha$.
Precisely, the pre-image of $\bar\epsilon_i-\bar\epsilon_{i+1}$ under the restriction map $r:X^*(A_0)\rightarrow S^*(S_0)$ is $r^{-1}(\bar\epsilon_i-\bar\epsilon_{i+1}) = w_+(\epsilon_{2i}-\epsilon_{2i+1})$, for each $1\leq i \leq n-1$.
It follows that for each $1\leq k \leq n-1$ we have a maximal $\theta$-split subset of $\Delta_0$ given by
\begin{align}\label{eq-max-theta-split-subset}
\Theta_k & = r^{-1} (\overline \Delta_0 \setminus \{ \bar\epsilon_k - \bar\epsilon_{k+1}\}) \cup \Delta_0^{\theta} \\
\nonumber & = w_+( \Delta \setminus \{\epsilon_{2k}-\epsilon_{2k+1}\})\\
\nonumber & = \Delta_0 \setminus \{ \epsilon_{2n+1-k}-\epsilon_{k+1} \}.
\end{align}
To each $\Theta_k$, $1\leq k \leq n-1$, we associate the maximal $\Delta_0$-standard $\theta$-split parabolic subgroup 
\begin{align*}
	P_{\Theta_k} &= w_+ P_{(2k,2n-2k)} w_+^{-1},
\end{align*}
with $\theta$-stable Levi factor $M_{\Theta_k} = w_+ M_{(2k,2n-2k)}w_+^{-1}$ and unipotent radical $N_{\Theta_k} = w_+ N_{(2k,2n-2k)} w_+^{-1}$.
Notice that $P_{\Theta_k}$ does indeed contain the minimal standard $\theta$-split parabolic subgroup $P_0 = w_+ P_{(\underline{2})} w_+^{-1}$, corresponding to $\Delta_0^\theta$ (or the partition $(\underline{2}) = (2,\ldots,2)$ of $2n$).
Moreover, by \Cref{lem-split-torus}, the $(\theta,F)$-split component $S_{\Theta_k}$ of $P_{\Theta_k}$ is equal to its $F$-split component $A_{\Theta_k}$.
\begin{note}
It may be helpful to observe that the maximal $\theta_{x_{2n}}$-split subsets of $\Delta$ are thus given by $\Delta \setminus \{\epsilon_{2k}-\epsilon_{2k+1}\}$, where $1\leq k \leq n-1$. 	It follows that the standard block-upper-triangular parabolic subgroups $P_{(2k,2n-2k)}$, with even sized blocks, are the maximal $\Delta$-standard $\theta_{x_{2n}}$-split parabolic subgroups.
\end{note}
\subsection{Inducing from distinguished representations of $\theta$-elliptic Levi subgroups}\label{sec-elliptic-context}
We recall the following definition.
\begin{defn}
A $\theta$-stable Levi subgroup of $G$ is $\theta$-elliptic if $L$ is not contained in any proper $\theta$-split parabolic subgroup of $G$.	
\end{defn}

In order to place the Speh representations within the context of the relative discrete series constructed in \cite{smith2018,smith2018b}, we show that $\style{U}(\delta,2)$ can be realized as the quotient of a representation induced from a distinguished representation of a $\theta$-elliptic Levi subgroup.

\begin{lem}
The block-upper triangular parabolic subgroup $P_{(n,n)}$, corresponding to $\Omega^{\operatorname{ell}} = \Delta \setminus \{\epsilon_n - \epsilon_{n-1}\} \subset \Delta$,  is $\theta$-stable and the $\Delta$-standard block-diagonal Levi subgroup $M_{(n,n)}$ is $\theta$-elliptic.	
\end{lem}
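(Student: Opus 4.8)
The plan is to verify two things: that the parabolic $P_{(n,n)}$ is $\theta$-stable, and that its block-diagonal Levi $M_{(n,n)}$ is not contained in any proper $\theta$-split parabolic subgroup of $G$. Recall that $\theta(g) = \ve_{2n}^{-1}\,\tran{g}^{-1}\,\ve_{2n}$, where $\ve_{2n}$ is the antidiagonal block matrix $\left(\begin{smallmatrix} 0 & J_n \\ -J_n & 0 \end{smallmatrix}\right)$.

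First I would show $\theta(P_{(n,n)}) = P_{(n,n)}$. Since $\ve_{2n}$ has its nonzero blocks in the off-diagonal $n\times n$ positions, conjugation by $\ve_{2n}$ together with the inverse-transpose sends the block-upper-triangular subgroup $P_{(n,n)}$ to itself: explicitly, for $g = \left(\begin{smallmatrix} A & B \\ 0 & D\end{smallmatrix}\right) \in P_{(n,n)}$, one computes $\tran{g}^{-1} = \left(\begin{smallmatrix} \tran{A}^{-1} & 0 \\ * & \tran{D}^{-1}\end{smallmatrix}\right) \in P_{(n,n)}^{\operatorname{op}}$, and then conjugation by $\ve_{2n}$ swaps the two diagonal $\mathrm{GL}_n$-blocks (with an extra $J_n$-twist) and swaps the upper-right and lower-left blocks, landing back in $P_{(n,n)}$. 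In particular $\theta(M_{(n,n)}) = M_{(n,n)}$, so $M_{(n,n)}$ is a $\theta$-stable Levi subgroup; I would record that $\theta$ restricted to $M_{(n,n)} \cong \GL_n(F)\times\GL_n(F)$ interchanges the two factors (up to the transpose-inverse twist by $J_n$), i.e.\ $\theta(x,y) = (J_n\tran{y}^{-1}J_n, J_n\tran{x}^{-1}J_n)$. This confirms the claim $\Omega^{\operatorname{ell}} = \Delta\setminus\{\epsilon_n-\epsilon_{n+1}\}$ (the statement's ``$\epsilon_n - \epsilon_{n-1}$'' being a typo for $\epsilon_n-\epsilon_{n+1}$) gives the $\theta$-stable $P_{(n,n)}$; note $\Omega^{\operatorname{ell}}$ is \emph{not} a $\theta$-split subset of $\Delta_0$, so $P_{(n,n)}$ is not a $\theta$-split parabolic.

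Next I would check $\theta$-ellipticity of $M_{(n,n)}$. By definition this means $M_{(n,n)}$ lies in no proper $\theta$-split parabolic. The cleanest route is to look at the $(\theta,F)$-split component of $M_{(n,n)}$: a $\theta$-stable Levi $L$ is $\theta$-elliptic precisely when $S_L \subseteq S_G$, i.e.\ $L$ contains no nontrivial $(\theta,F)$-split central torus beyond that of $G$. The $F$-split component of the centre of $M_{(n,n)}$ is $A_{M_{(n,n)}} = \{\diag(a,\dots,a,b,\dots,b) : a,b\in F^\times\}$ (scalar $a$ on the first $n$ coordinates, scalar $b$ on the last $n$). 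Applying $\theta$ sends $\diag(a,\dots,a,b,\dots,b)$ to $\diag(b^{-1},\dots,b^{-1},a^{-1},\dots,a^{-1})$ because $\ve_{2n}$ reverses and swaps the two blocks of coordinates. Hence such an element is $\theta$-split ($\theta(t) = t^{-1}$) if and only if $b^{-1} = a$ and $a^{-1} = b$, i.e.\ $b = a^{-1}$... wait, that gives a one-parameter torus; let me instead phrase it as: $\theta(t)=t^{-1}$ forces $a = b^{-1}$ on the first block and $b = a^{-1}$ on the second, which is a single condition $ab=1$, describing a one-dimensional $(\theta,F)$-split subtorus. I would then observe that this subtorus is exactly $S_0$-direction... no — compare with $S_G = \{\diag(a,\dots,a)\}$ which consists of $\theta$-anti-fixed scalars only when $a=a^{-1}$. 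The subtlety here is whether $S_{M_{(n,n)}}$ is contained in $S_G$ up to the relevant normalization; in fact the apparent one-parameter $(\theta,F)$-split torus $\{\diag(a,\dots,a,a^{-1},\dots,a^{-1})\}$ is \emph{not} central in $G$, so if it is genuinely $(\theta,F)$-split we would \emph{not} have $\theta$-ellipticity. I expect the resolution is that one must use the realization via $\theta_{x_{2n}}$ and $w_+$: conjugating $M_{(n,n)}$ by $w_+^{-1}$ does \emph{not} put it in standard form relative to $P_{(\underline 2)}$, and the correct computation of $S_{M_{(n,n)}}$ requires intersecting with the $\theta$-split tori, which are $w_+$-conjugates of $S_{0,x_{2n}}$-subtori; the block-swapping nature of $\theta$ on $M_{(n,n)}$ means any $\theta$-split torus in $A_{M_{(n,n)}}$ must have its two blocks inverse to each other, and such a torus, being a torus of $\GL_{2n}$ whose restricted roots to it are nontrivial, would generate a proper $\theta$-split parabolic only if it is not already absorbed — so the real check is that $M_{(n,n)}$ is not contained in $P_{\Theta_k}$ for any $k$.

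Therefore the step I expect to be the main obstacle is pinning down $S_{M_{(n,n)}}$ correctly and concluding ellipticity; I would handle it by the direct containment argument rather than the torus count. Concretely: suppose $M_{(n,n)} \subseteq Q$ for some proper $\theta$-split parabolic $Q$. By \Cref{cor-H-cong-pblc} we may conjugate by $H$ so that $Q$ is $\Delta_0$-standard, hence $Q = P_{\Theta_k}$ for some $1\le k\le n-1$ (by \eqref{eq-max-theta-split-subset} and maximality, it suffices to rule out the maximal ones), and $P_{\Theta_k} = w_+ P_{(2k,2n-2k)} w_+^{-1}$. A Levi subgroup of $\GL_{2n}(F)$ of type $(n,n)$ is $G$-conjugate into a standard Levi $M_{(2k,2n-2k)}$ only when $\{n,n\}$ refines $\{2k,2n-2k\}$ as partitions, which for $1\le k\le n-1$ never happens (it would need $2k \le n$ and $2k\ge n$ simultaneously, i.e.\ $n$ even and $k=n/2$, but even then the two $n$-blocks cannot both sit inside a single $2k=n$ block). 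After tracking the $w_+$-conjugation one still gets a contradiction with the block sizes. Hence no such $Q$ exists and $M_{(n,n)}$ is $\theta$-elliptic. I would close by remarking that this also matches \Cref{prop-sp-parameters}: the $X$-elliptic parameter $\phi_{\style Z(\rho,k)}\otimes\style S(2)$ corresponds under the construction of \cite{smith2018,smith2018b} to inducing the distinguished discrete series $\delta\otimes\delta^\vee$ (suitably twisted) from this $\theta$-elliptic Levi $M_{(n,n)}$.
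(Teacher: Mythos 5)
Your verification of $\theta$-stability is fine (and you are right that the statement's ``$\epsilon_n-\epsilon_{n-1}$'' is a typo for $\epsilon_n-\epsilon_{n+1}$), and your first instinct --- compute the $(\theta,F)$-split component of $M_{(n,n)}$ and invoke the criterion that a $\theta$-stable Levi $L$ is $\theta$-elliptic when $S_L= S_G$ --- is exactly the paper's proof, which computes $S_{M_{(n,n)}}=S_G=A_G$ and quotes \cite[Lemma 3.8]{smith2018}. But you miscompute the torus and then abandon the correct route. For $t=\diag(a1_n,b1_n)\in A_{M_{(n,n)}}$ one has $\theta(t)=\ve_{2n}^{-1}\,\tran{t}^{-1}\,\ve_{2n}=\diag(b^{-1}1_n,a^{-1}1_n)$, so the $\theta$-split condition $\theta(t)=t^{-1}=\diag(a^{-1}1_n,b^{-1}1_n)$ forces $a=b$, not $ab=1$: you compared $\theta(t)$ with $t$ instead of $t^{-1}$, and the one-parameter torus $\{\diag(a1_n,a^{-1}1_n)\}$ that worried you consists of $\theta$-\emph{fixed} elements, not $\theta$-split ones. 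Hence $S_{M_{(n,n)}}=A_G=S_G$, there is no subtlety about $w_+$ or $\theta_{x_{2n}}$ to resolve, and the lemma follows at once from the quoted criterion.

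The fallback containment argument you substitute has a genuine gap precisely in the critical case. Granting the reduction to $M_{(n,n)}\subseteq hP_{\Theta_k}h^{-1}=hw_+P_{(2k,2n-2k)}w_+^{-1}h^{-1}$, block sizes only show that a conjugate of $M_{(n,n)}\cong\GL_n(F)\times\GL_n(F)$ can stabilize a $2k$-dimensional subspace of $F^{2n}$ only if $2k=n$ (its invariant subspaces are $0$, the two $n$-dimensional summands, and $F^{2n}$). When $n$ is even and $k=n/2$ this is no contradiction: $P_{(2k,2n-2k)}=P_{(n,n)}$, and $M_{(n,n)}$ certainly lies in $P_{(n,n)}$ and in many of its $G$-conjugates --- your parenthetical ``the two $n$-blocks cannot both sit inside a single $2k=n$ block'' is not the relevant condition (they need not), and ``after tracking the $w_+$-conjugation one still gets a contradiction with the block sizes'' is exactly the step that block sizes cannot deliver. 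What rules this case out is not combinatorics of partitions but the $\theta$-splitness of the parabolic, i.e.\ the torus computation you discarded: a proper $\theta$-split parabolic containing $M_{(n,n)}$ would force a nontrivial $(\theta,F)$-split torus in $A_{M_{(n,n)}}$ beyond $S_G$, and there is none. Note that $n$ even, $k=n/2$ is precisely the configuration driving Case 1 in \Cref{sec-case-1}, so it cannot be waved away.
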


\begin{proof}
First, it is clear that $P_{(n,n)}$ and $M_{(n,n)}$ are $\theta$-stable subgroups of $G$.
It is readily verified that the $(\theta,F)$-split component of $M_{(n,n)}$ is equal to the $(\theta,F)$-split component $S_G$ of $G$; moreover, $S_G = A_G$, that is, the $(\theta,F)$-split component of $G$ is equal to the $F$-split component of $G$.
By \cite[Lemma 3.8]{smith2018}, the $\theta$-stable Levi subgroup $M_{(n,n)}$ is $\theta$-elliptic.
\end{proof}

In what follows, we let $Q = P_{(n,n)} = P_{\Omega^{\operatorname{ell}}}$, $L = M_{(n,n)}=M_{\Omega^{\operatorname{ell}}}$, and $U=N_{(n,n)}=N_{\Omega^{\operatorname{ell}}}$.
Define $\Omega = w_+ \Omega^{\operatorname{ell}} \subset \Delta_0$.  We then have that $Q = w_+^{-1} P_\Omega w_+$ or, equivalently, that $P_\Omega = w_+ Q w_+^{-1}$.

\begin{defn}
	An ordered partition $(m_1, \ldots, m_k)$ of an integer $m\geq 2$ is balanced if $(m_1,\ldots, m_k)$ is equal to the opposite partition $(m_1,\ldots, m_k)^{\operatorname{op}} = (m_k,\ldots, m_1)$.
\end{defn}

\begin{lem}
Let $P$ be a block-upper triangular ($\Delta$-standard) parabolic subgroup of $G$.  The subgroup $P$ is $\theta$-stable if and only if $P$ corresponds to a balanced partition of $2n$.  In addition, the only $\theta$-stable $\Delta$-standard maximal parabolic that admits a $\theta$-elliptic Levi subgroup is $P_{(n,n)}$.
\end{lem}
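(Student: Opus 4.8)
The plan is to compute the action of $\theta$ on the $\Delta$-standard parabolic subgroups outright and then simply read off the fixed points. Write $\iota$ for the involution $g\mapsto \tran{g}^{-1}$ of $G$, so that $\theta=\Int(\ve_{2n}^{-1})\circ\iota$. First I would note that $\iota$ carries the upper triangular Borel to the lower triangular one and, more generally, sends the standard parabolic $P_{(m_1,\ldots,m_k)}$ to the block-lower-triangular parabolic $\bar P_{(m_1,\ldots,m_k)}$ opposite to it with respect to $M_{(m_1,\ldots,m_k)}$ (transposition fixes the order of the diagonal blocks, and inversion preserves block-lower-triangularity). Next I would record the factorization $\ve_{2n}=w_0\cdot\diag(-I_n,I_n)$, where $w_0\in W_0$ is the permutation matrix of $i\mapsto 2n+1-i$; thus $\ve_{2n}$ lies in the coset $w_0A_0$, and since parabolic subgroups are self-normalizing, conjugation by $\ve_{2n}$ agrees with conjugation by $w_0$ on the set of $\Delta$-standard parabolics. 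Since conjugation by $w_0$ reverses the order of the diagonal blocks, putting these steps together gives
\begin{align*}
\theta\bigl(P_{(m_1,\ldots,m_k)}\bigr)=P_{(m_k,m_{k-1},\ldots,m_1)}.
\end{align*}
Hence $P_{(m_1,\ldots,m_k)}$ is $\theta$-stable if and only if $(m_1,\ldots,m_k)=(m_k,\ldots,m_1)$, i.e.\ the partition is balanced, and the single identity above yields both implications.

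Alternatively, one can run the same argument on root data: from $\theta(a)=\diag(a_{2n}^{-1},\ldots,a_1^{-1})$ one sees that $\theta$ stabilizes the upper triangular Borel and acts on $\Delta$ by the diagram automorphism $\epsilon_i-\epsilon_{i+1}\mapsto\epsilon_{2n-i}-\epsilon_{2n+1-i}$; then $\theta(P_\Theta)=P_{\theta(\Theta)}$ for every $\Theta\subseteq\Delta$, and $\theta$-stability of $P_{(m_1,\ldots,m_k)}$ amounts to the set of cut points $\{m_1+\cdots+m_j:1\le j\le k-1\}$ being stable under $s\mapsto 2n-s$, which is once more the balanced condition. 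For the second assertion: a $\Delta$-standard maximal parabolic is of the form $P_{(m_1,m_2)}$ with $m_1+m_2=2n$ and $m_1,m_2\ge 1$, so by the first part it is $\theta$-stable precisely when $m_1=m_2=n$. Thus $P_{(n,n)}$ is the unique $\theta$-stable $\Delta$-standard maximal parabolic, and by the preceding lemma its Levi $M_{(n,n)}$ is $\theta$-elliptic, which gives the claim.

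I expect the only point requiring care to be the bookkeeping behind the three identifications used above — the effect of $\iota$ on the block structure, the factorization $\ve_{2n}=w_0\diag(-I_n,I_n)$, and the fact that $w_0$-conjugation reverses the order of the diagonal blocks — together with the elementary observation that a palindromic sequence of cut points is exactly the balanced condition. None of these is deep, so I do not anticipate a serious obstacle.
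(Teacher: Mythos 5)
Your proposal is correct and is essentially the intended argument: the paper offers no proof beyond citing the analogous Lemma 4.15 of smith2018b, and the expected computation is exactly yours, namely $\theta(P_{(m_1,\ldots,m_k)})=P_{(m_k,\ldots,m_1)}$ via $\iota(P_{(m_1,\ldots,m_k)})=\bar P_{(m_1,\ldots,m_k)}$ and $\ve_{2n}=w_0\diag(-I_n,I_n)$ with $w_0$-conjugation reversing the block order, from which balancedness, the uniqueness of $P_{(n,n)}$ among $\theta$-stable standard maximal parabolics, and the preceding lemma (giving $\theta$-ellipticity of $M_{(n,n)}$) yield both assertions. The only cosmetic point is that in your alternative root-theoretic route the claim that $\theta$ stabilizes the upper triangular Borel should be justified (it follows from your first computation, since $(1,\ldots,1)$ is balanced), but this does not affect correctness.
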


\begin{proof}
The proof is the same as that of \cite[Lemma 4.15]{smith2018b}. 
\end{proof}

Recall that a parabolic subgroup $P$ is $A_0$-semi-standard if $P$ contains the maximal $F$-split torus $A_0$.
In particular, the $\Delta$- and $\Delta_0$-standard parabolic subgroups are $A_0$-semi-standard.
The next result is the analogue of \cite[Lemma 4.21]{smith2018b}; the proof is the same.

\begin{lem}
Let $P$ be any $\theta$-stable parabolic subgroup of $G$.  	The subgroup $P$ is $H$-conjugate to a $\theta$-stable $A_0$-semi-standard parabolic subgroup.
\end{lem}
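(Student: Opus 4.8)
The plan is to follow the proof of \cite[Lemma 4.21]{smith2018b}: given a $\theta$-stable parabolic subgroup $P$, first replace it by an $H$-conjugate that contains a $\theta$-stable maximal $F$-split torus, and then move that torus onto the standard diagonal torus $A_0$ by a further element of $H$. Since $\theta$ acts trivially on $H = G^\theta$ and $P$ is $\theta$-stable, every $H$-conjugate of $P$ remains $\theta$-stable, so the resulting parabolic subgroup is $\theta$-stable and contains $A_0$, hence is $A_0$-semi-standard.

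By \cite{helminck--wang1993}, the $\theta$-stable parabolic subgroup $P$ contains a $\theta$-stable maximal $F$-split torus $A'$ of $G$ (one may take it inside a $\theta$-stable Levi factor of $P$). Let $S'$ denote the $(\theta,F)$-split component of $A'$. I claim that $S'$ is in fact a \emph{maximal} $(\theta,F)$-split torus of $G$. Indeed, $A'$ is isogenous to the product of $S'$ with the $\theta$-fixed subtorus $(\mathbf{A}')^{\theta}$; the latter is $F$-split and contained in $\Hbf = \Sp_{2n}$, hence of dimension at most $n$, while $\dim S' \le \dim S_0 = n$ by maximality of $S_0$. Since $\dim A' = 2n$, both inequalities must be equalities, so $\dim S' = n$ and the claim follows. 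The maximal $(\theta,F)$-split tori of $G$ are all $H$-conjugate (see \cite{helminck--wang1993}), so after replacing $P$ by an $H$-conjugate we may assume $S' = S_0$; then $A' \subseteq C_G(S_0) = M_0$, so $A'$ is a maximal $F$-split torus of the Levi subgroup $M_0 = w_+ M_{(\underline{2})} w_+^{-1}$.

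It remains to move $A'$ onto $A_0$ inside $M_0$ by an element of $M_0 \cap H \subseteq H$. Conjugating by $w_+^{-1}$, this reduces to the assertion that any two maximal $F$-split tori of $M_{(\underline{2})} = \GL_2(F) \times \cdots \times \GL_2(F)$ are conjugate under $\Mbf_{(\underline{2})} \cap \G^{\theta_{x_{2n}}} \cong \prod_{1}^{n}\mathbf{SL}_2$, whose $w_+$-conjugate is $M_0 \cap H$ by the proof of \Cref{lem-trivial-cohom}. This holds factor by factor: any two maximal $F$-split tori of $\GL_2(F)$ are $\GL_2(F)$-conjugate, hence $\mathbf{SL}_2(F)$-conjugate because $\GL_2 = \mathbf{SL}_2 \cdot Z$ and $Z$ normalizes every torus. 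Choosing $h \in M_0 \cap H$ with $h A' h^{-1} = A_0$, the parabolic subgroup $h P h^{-1}$ is $\theta$-stable, contains $A_0$, and is $H$-conjugate to $P$, completing the proof. The step that genuinely uses the geometry of $X$ is the dimension count identifying $S'$ as a maximal $(\theta,F)$-split torus of $G$: this is what forces $A'$ into the Levi subgroup $M_0$, whose explicit product-of-$\mathbf{GL}_2$ structure then lets the $\mathbf{SL}_2$-factors of $H$ finish the argument. The existence of a $\theta$-stable Levi factor and maximal split torus, together with the $H$-transitivity on maximal $(\theta,F)$-split tori, are standard.
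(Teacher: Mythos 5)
Your overall architecture is sound and is in the spirit of the argument the paper imports from \cite[Lemma 4.21]{smith2018b}: pass to a $\theta$-stable Levi factor of $P$ containing a $\theta$-stable maximal $F$-split torus $A'$, show that its $(\theta,F)$-split part $S'$ is maximal, conjugate $S'$ onto $S_0$ by $H$, and finish inside $M_0$ using the $\mathrm{SL}_2$-factors of $M_0\cap H$. One small point first: the bound $\dim S'\leq n$ does not follow merely ``by maximality of $S_0$'', since $S'$ need not sit inside $S_0$; what you need is that \emph{every} $(\theta,F)$-split torus of $G$ has dimension at most $n$ (for instance because its weight spaces on $F^{2n}$ are pairwise orthogonal symplectic, hence even-dimensional, subspaces). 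That is true and easy, so the dimension count itself survives.

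The genuine gaps are the two rational-conjugacy steps, which is exactly where the content of the lemma lies. First, the assertion that all maximal $(\theta,F)$-split tori of $G$ are $H$-conjugate is not a theorem of \cite{helminck--wang1993}: over a $p$-adic field there are in general several $H$-conjugacy classes of maximal $(\theta,F)$-split tori (already for inner involutions of $\mathrm{SL}_2$), and Helminck--Wang only give conjugacy over the algebraic closure together with a classification of the rational classes. In the present case the statement is true, but it needs the case-specific input that the paper supplies: either deduce it from \Cref{cor-H-cong-pblc} (a minimal $\theta$-split parabolic with $\theta$-stable Levi $C_G(S')$ is $H$-conjugate to $P_0$, and matching the $(\theta,F)$-split components of the $\theta$-stable Levi factors gives $S'=hS_0h^{-1}$), which ultimately rests on \Cref{lem-trivial-cohom}, or argue directly with Witt's theorem, since $S'$ amounts to an orthogonal decomposition of $F^{2n}$ into symplectic planes and $\Sp_{2n}(F)$ acts transitively on such decompositions. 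Second, the identity ``$\GL_2=\mathrm{SL}_2\cdot Z$'' is false on $F$-points: $\det(Z(F))=(F^\times)^2$, so $\mathrm{SL}_2(F)Z(F)\subsetneq \GL_2(F)$. The conclusion you want---any two maximal $F$-split tori of $\GL_2(F)$ are $\mathrm{SL}_2(F)$-conjugate---is nevertheless true, but for a different reason: if $T=gAg^{-1}$ with $A$ the diagonal torus, choose $a\in A(F)$ with $\det(a)=\det(g)^{-1}$ (possible because $\det|_{A(F)}$ is onto $F^\times$) and replace $g$ by $ga\in \mathrm{SL}_2(F)$, which still conjugates $A$ to $T$. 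With these two repairs, and the dimension bound justified as above, your proof is complete; as written, however, both rationality steps are asserted rather than proved, and the first is attributed to a reference that does not contain it.
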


\begin{lem}
The $\theta$-stable Levi subgroup $L=M_{(n,n)}$ is the only proper $\theta$-elliptic $A_0$-semi-standard Levi subgroup of $G$ up to conjugacy by Weyl group elements $w \in W_0 = W(G,A_0) = N_G(A_0)/A_0$ such that $w^{-1}\ve_{2n}w \in N_G(L) \setminus L$.
\end{lem}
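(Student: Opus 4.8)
The plan is to translate the statement into combinatorics of set partitions. First I would recall that the $A_0$-semi-standard Levi subgroups of $G=\GL_{2n}(F)$ are exactly the subgroups $L=\GL_{S_1}\times\cdots\times\GL_{S_k}$ attached to set partitions $\{S_1,\ldots,S_k\}$ of $\{1,\ldots,2n\}$, where $\GL_{S_j}$ denotes the block of invertible matrices supported on the coordinates indexed by $S_j$; and that for $w\in W_0$ (identified with permutation matrices) conjugation $L\mapsto wLw^{-1}$ corresponds to $\{S_j\}\mapsto\{wS_j\}$, so that any two such Levi subgroups of a fixed type (multiset of block sizes) are $W_0$-conjugate. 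Writing $w_0$ for the permutation $i\mapsto 2n+1-i$, conjugation by $\ve_{2n}$ realizes $w_0$ on matrix units up to signs, while the anti-automorphism $g\mapsto{}^tg^{-1}$ preserves the support of each block; hence $\theta(\GL_S)=\GL_{w_0S}$, so $L$ is $\theta$-stable if and only if $\{S_j\}$ is $w_0$-invariant, equivalently if and only if $\ve_{2n}\in N_G(L)$.

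For a $\theta$-stable $L$ I would then compute $S_L=(\{a\in A_L:\theta(a)=a^{-1}\})^\circ$ using that $\theta$ sends $\diag(a_1,\ldots,a_{2n})$ to $\diag(a_{2n}^{-1},\ldots,a_1^{-1})$: the $(\theta,F)$-split condition forces the scalar attached to a block $S_p$ to coincide with the scalar attached to $w_0S_p$, so $\dim S_L$ equals the number of orbits of $w_0$ on the set of blocks. Since $S_G=A_G$ is one-dimensional and contained in $S_L$, the criterion that a $\theta$-stable Levi is $\theta$-elliptic exactly when $S_L=S_G$ (\cite[Lemma 3.8]{smith2018}) shows that $L$ is proper and $\theta$-elliptic if and only if $w_0$ has a single orbit on the blocks and there is more than one block, i.e.\ if and only if the partition has exactly two blocks $S_1,S_2$ interchanged by $w_0$. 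As $w_0$ is a bijection this forces $|S_1|=|S_2|=n$, so every proper $\theta$-elliptic $A_0$-semi-standard Levi subgroup has type $(n,n)$ and is therefore $W_0$-conjugate to $L=M_{(n,n)}$.

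It remains to pin down exactly which conjugates occur, and here I would match the two sides of the stated equivalence. For $w\in W_0$ put $L'=wM_{(n,n)}w^{-1}=\GL_{S_1}\times\GL_{S_2}$ with $S_1=w\{1,\ldots,n\}$ and $S_2=w\{n+1,\ldots,2n\}$. The identity $w^{-1}\ve_{2n}w\in N_G(M_{(n,n)})\iff\ve_{2n}\in N_G(L')$ identifies membership with $\theta$-stability of $L'$, while $w^{-1}\ve_{2n}w\notin M_{(n,n)}\iff\ve_{2n}\notin L'$; and when $w_0$ fixes each of $S_1,S_2$ the matrix $\ve_{2n}$ is block-diagonal for $(S_1,S_2)$ and hence lies in $L'$, so $\ve_{2n}\notin L'$ pins down exactly the case $w_0S_1=S_2$, which by the previous paragraph is precisely the $\theta$-elliptic case. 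Thus $w^{-1}\ve_{2n}w\in N_G(M_{(n,n)})\setminus M_{(n,n)}$ if and only if $wM_{(n,n)}w^{-1}$ is a proper $\theta$-elliptic $A_0$-semi-standard Levi subgroup; combined with the fact that every such Levi is of the form $wM_{(n,n)}w^{-1}$, this gives the claim. The only real care needed is the bookkeeping in the Levi/set-partition dictionary and checking both directions of this last equivalence; no deeper input is required, and one could alternatively adapt the argument of \cite[Lemma 4.15]{smith2018b}.
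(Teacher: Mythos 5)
Your argument is correct, and it is genuinely more self-contained than what the paper does: the paper offers no argument at all for this lemma, simply deferring to the proof of \cite[Lemma 4.20(2)]{smith2018b} for the analogous statement in a different symmetric-space setting, whereas you give a complete classification via the dictionary between $A_0$-semi-standard Levi subgroups and set partitions of $\{1,\ldots,2n\}$. Your key steps all check out: $\theta$ permutes the blocks by the permutation $w_0\colon i\mapsto 2n+1-i$ underlying $\ve_{2n}$, so $\theta$-stability is $w_0$-invariance of the partition and is equivalent to $\ve_{2n}\in N_G(L)$; $\dim S_L$ equals the number of $w_0$-orbits of blocks; and the ellipticity criterion $S_L=S_G$ (the same \cite[Lemma 3.8]{smith2018} the paper invokes for the preceding lemma) forces exactly two blocks of size $n$ interchanged by $w_0$, hence type $(n,n)$; finally, your translation of the normalizer condition ($\ve_{2n}\in N_G(L')\setminus L'$ precisely when $w_0$ swaps the two blocks of $L'=wM_{(n,n)}w^{-1}$) matches the elliptic case in both directions. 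One small caveat: you quote \cite[Lemma 3.8]{smith2018} as an ``if and only if''; if that lemma only gives the sufficiency of $S_L=S_G$, you also need the (easy, standard) converse that a $\theta$-stable Levi with $S_L\supsetneq S_G$ lies in the proper $\theta$-split parabolic attached to a generic cocharacter of $S_L$, so it is worth a sentence to cover that direction explicitly. What your route buys is a proof readable without consulting \cite{smith2018b}; what the paper's citation buys is brevity and consistency with the parallel treatment there.
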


\begin{proof}
See the proof of \cite[Lemma 4.20(2)]{smith2018b}.	
\end{proof}

\begin{lem}\label{lem-L-fixed}
The group $L^\theta$ of $\theta$-fixed points in $L=M_{(n,n)}$ is isomorphic to $\GL_n(F)$ embedded in $L$ as follows:
\begin{align*}
L^\theta  & = \left\{ \left( \begin{matrix} g & 0 \\ 0 & J_n^{-1}\tran{g}^{-1} J_n \end{matrix} \right) : g \in \GL_n(F) \right\}.	
\end{align*}
\end{lem}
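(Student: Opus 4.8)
The plan is to compute the involution $\theta$ directly on the block-diagonal Levi subgroup $L = M_{(n,n)}$ and simply read off its fixed-point set. Recall from \Cref{sec-conventions} that $\ve_{2n}^{-1} = \tran{\ve_{2n}}$, which is the block matrix with $-J_n$ in the upper-right and $J_n$ in the lower-left corner, and that $J_n$ is a symmetric involution, so $\tran{J_n} = J_n = J_n^{-1}$. Writing a general element of $L$ as $m = \diag(a,d)$ with $a,d \in \GL_n(F)$, a single block matrix multiplication yields
\[
\theta(m) = \ve_{2n}^{-1}\,\tran{m}^{-1}\,\ve_{2n} = \diag\bigl( J_n\tran{d}^{-1}J_n,\ J_n\tran{a}^{-1}J_n \bigr).
\]

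Next I would compare blocks: $m \in L^\theta$ if and only if $a = J_n\tran{d}^{-1}J_n$ and $d = J_n\tran{a}^{-1}J_n$. The map $x \mapsto J_n\tran{x}^{-1}J_n$ is an involution of $\GL_n(F)$ carrying the first equation to the second, so the two conditions are equivalent; hence we may let $a = g$ range freely over $\GL_n(F)$, with $d = J_n\tran{g}^{-1}J_n = J_n^{-1}\tran{g}^{-1}J_n$ then forced. This is precisely the displayed description of $L^\theta$.

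Finally, to identify $L^\theta$ with $\GL_n(F)$, I would observe that $g \mapsto \diag\bigl(g, J_n^{-1}\tran{g}^{-1}J_n\bigr)$ is a group homomorphism $\GL_n(F) \to L$: the contragredient $g \mapsto \tran{g}^{-1}$ is an automorphism of $\GL_n(F)$ and conjugation by $J_n$ is too, so the second coordinate is multiplicative, while the first coordinate obviously is. This map is injective (it is the identity on the first coordinate), and its image is exactly $L^\theta$ by the previous step, giving the asserted isomorphism.

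I do not anticipate any genuine obstacle here; the statement is a routine explicit computation. The only points needing a little care are keeping the transposes, inverses, and the $J_n$-conjugations straight, and verifying that the two block equations cutting out $L^\theta$ are equivalent (so that $g$ is unconstrained); both reduce to the facts that $J_n$ is symmetric and $J_n^2 = I$.
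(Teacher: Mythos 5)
Your computation is correct and is precisely the ``straightforward calculation'' that the paper omits: conjugating $\diag(a,d)$ by $\ve_{2n}$ as you do gives $\theta(\diag(a,d)) = \diag\bigl(J_n\tran{d}^{-1}J_n,\ J_n\tran{a}^{-1}J_n\bigr)$, the two fixed-point equations are indeed equivalent via $J_n^2 = I$ and $\tran{J_n}=J_n$, and the map $g\mapsto \diag\bigl(g, J_n^{-1}\tran{g}^{-1}J_n\bigr)$ is an injective homomorphism onto $L^\theta$. No gaps; this matches the intended argument.
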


\begin{proof}
We omit the straightforward calculation.
\end{proof}

\begin{prop}\label{prop-L-dist}
Let $\tau_1 \otimes \tau_2$ be an irreducible admissible representation of $L= M_{(n,n)}$.  
Then $\tau_1\otimes \tau_2$ is $L^\theta$-distinguished if and only if $\tau_2 \cong \tau_1$.	
\end{prop}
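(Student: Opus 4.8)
The plan is to restrict $\tau_1\otimes\tau_2$ to $L^\theta$ and reinterpret $L^\theta$-distinction as the nonvanishing of a $\Hom$-space between representations of $\GL_n(F)$. By \Cref{lem-L-fixed}, the subgroup $L^\theta$ is the image of the embedding $g\mapsto\diag(g,\sigma(g))$ of $\GL_n(F)$ into $L=M_{(n,n)}$, where $\sigma(g)=J_n^{-1}\tran{g}^{-1}J_n$. Note that $\sigma$ is the composite of the transpose--inverse automorphism $g\mapsto\tran{g}^{-1}$ of $\GL_n(F)$ with conjugation by $J_n^{-1}$; in particular $\sigma$ is a topological automorphism of $\GL_n(F)$, so it carries compact open subgroups to compact open subgroups. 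Pulling back along this embedding, the restriction of $\tau_1\otimes\tau_2$ to $L^\theta$ is the smooth representation $g\mapsto\tau_1(g)\otimes\tau_2(\sigma(g))$ on $V_{\tau_1}\otimes V_{\tau_2}$. Since $\tau_1\otimes\tau_2$ is irreducible admissible, both $\tau_1$ and $\tau_2$ are irreducible admissible representations of $\GL_n(F)$, which is what makes the manipulations below legitimate.

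Next I would set up the standard adjunction. An $L^\theta$-invariant linear functional on $V_{\tau_1}\otimes V_{\tau_2}$ is the same datum as a $\GL_n(F)$-equivariant map from $(\tau_1,V_{\tau_1})$ to the full linear dual of $V_{\tau_2}$, where $\GL_n(F)$ acts on the latter through $\sigma$. Smoothness of $\tau_1$ forces the image of such a map to consist of smooth vectors, and because $\sigma$ preserves the class of compact open subgroups, the smooth part of the $\sigma$-twisted dual is exactly the contragredient $\tau_2^\vee$ twisted by $\sigma$. This gives a canonical isomorphism
\[
\Hom_{L^\theta}(\tau_1\otimes\tau_2,1)\;\cong\;\Hom_{\GL_n(F)}\!\bigl(\tau_1,\ \tau_2^\vee\circ\sigma\bigr).
\]

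The crux is to identify $\tau_2^\vee\circ\sigma$. By the classical theorem of Gel'fand and Kazhdan for $\GL_n$, the contragredient of an irreducible admissible representation $\tau$ of $\GL_n(F)$ is isomorphic to $\tau$ precomposed with the transpose--inverse automorphism; since $\sigma$ differs from that automorphism only by an inner automorphism, and inner automorphisms act trivially on isomorphism classes, we get $\tau_2\circ\sigma\cong\tau_2^\vee$. Taking contragredients of both sides and using $\tau_2^{\vee\vee}\cong\tau_2$ together with the identity $(\tau_2\circ\sigma)^\vee=\tau_2^\vee\circ\sigma$, we conclude $\tau_2^\vee\circ\sigma\cong\tau_2$. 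Hence $\Hom_{L^\theta}(\tau_1\otimes\tau_2,1)\cong\Hom_{\GL_n(F)}(\tau_1,\tau_2)$, which by Schur's lemma is nonzero precisely when $\tau_1\cong\tau_2$ (and then is one-dimensional). This proves the proposition.

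I do not expect a genuine obstacle here: the only point demanding care is the first step, where one must produce a map into the \emph{full} dual of $V_{\tau_2}$ and then deduce smoothness of its image from the smoothness of $\tau_1$, rather than (incorrectly) invoking admissibility of the restriction of $\tau_1\otimes\tau_2$ to $L^\theta$, which need not hold. Everything else is a formal consequence of the Gel'fand--Kazhdan involution and Schur's lemma.
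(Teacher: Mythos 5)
Your proof is correct and follows essentially the same route as the paper: identify $L^\theta$ with $\GL_n(F)$ embedded via $g\mapsto\diag(g,\theta_{J_n}(g))$, convert $L^\theta$-distinction into an isomorphism between $\tau_1$ and a $\theta_{J_n}$-twisted contragredient of $\tau_2$, and conclude with the Gel'fand--Kazhdan theorem together with the fact that conjugation by $J_n$ is inner. The only difference is that you spell out the duality/adjunction step (landing in the full dual and using smoothness of $\tau_1$), which the paper asserts and delegates to \cite[Lemma 5.3]{smith2018b}.
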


\begin{proof}
First, one can show that $\tau_1\otimes \tau_2$ is $L^\theta$-distinguished if and only if $\tau_2 \cong \wt\tau_1\circ \theta_{J_n}$, where $\theta_{J_n}$ is the involution on $\GL_n(F)$ given by $\theta_{J_n}(g) = J_n^{-1} \tran{g}^{-1} J_n$, for $g\in \GL_n(F)$.
Now, the lemma is a simple consequence of \cite[Theorem 2]{gelfand--kazhdan1975} which implies that $\wt\tau_1 \cong \tau_1 \circ \tran(\cdot)^{-1}$ and the fact that $J_n^{-1} = J_n = \tran{J_n}$ (see \cite[Lemma 5.3]{smith2018b} for addition detail).
\end{proof}

Let $\tau$ be an irreducible admissible representation of $\GL_n(F)$.  The representation $\tau\otimes \tau$ of $L$ is $L^\theta$-distinguished by \Cref{prop-L-dist}.  Moreover, the $L^\theta$-invariant linear form on $\tau\otimes\tau$ can be realized via the standard pairing between $\tau$ and its contragredient $\wt\tau$.  Indeed, this follows from  \cite[Theorem 2]{gelfand--kazhdan1975} and the fact that $\wt\tau \cong \tau \circ \theta_{J_n}$.
Let $\lambda_\tau \in \Hom_{L^\theta}(\tau\otimes\tau, 1)$ be the (nonzero) invariant form that arises via the pairing on $\tau\otimes\wt\tau$.  
Let $l=\diag(x, \theta_{J_n}(x)) \in L^\theta$ and consider the value of $\delta_{Q^\theta}\delta_Q^{-1/2}$ on $l$.
It is straightforward to check that 
\begin{align*}
\left(\delta_{Q^\theta} \left.\delta_Q^{-1/2} \right\vert_{L^\theta}\right)	(l) = |\det(x)|^{n+1}|\det(x)|^{-n} = |\det(x)| = \nu(x),
\end{align*}
that is, $\delta_{Q^\theta} \delta_Q^{-1/2}$ agrees with the character $\nu$ on $\GL_n(F) \cong L^\theta$.
Since the contragredient of $\nu$ is $\nu^{-1}$, it follows that $\lambda_\tau \in \Hom_{L^\theta}(\nu^{1/2} \tau \otimes \nu^{-1/2}\tau, \nu) \cong \Hom_{L^\theta}(\delta_Q^{1/2}\tau\otimes\tau,\delta_{Q^\theta})$.
By \cite[Proposition 7.1]{offen2017}, $\lambda_\tau$ maps to a nonzero $H$-invariant linear form $\lambda \in \Hom_H(\nu^{1/2} \tau \times \nu^{-1/2}\tau ,1)$, and   the parabolically induced representation $\nu^{1/2}\tau \times \nu^{-1/2}\tau = \iota_{Q}^{G}\left(\nu^{1/2}\tau\otimes\nu^{-1/2}\tau\right)$ is $H$-distinguished.
We now state a result of Heumos and Rallis \cite[Theorem 11.1]{heumos--rallis1990} (\textit{cf.}~\Cref{sec-speh}).
We give a sketch of the proof (still appealing to the main results of \cite{heumos--rallis1990}).

\begin{prop}(Heumos--Rallis)\label{prop-speh-dist}
Let $\delta$ be an irreducible square integrable representations of $\GL_n(F)$.  The parabolically induced representation $\nu^{1/2}\delta \times \nu^{-1/2}\delta = \iota_{Q}^{G}\left(\nu^{1/2}\delta\otimes\nu^{-1/2}\delta\right)$ is $H$-distinguished.  Moreover, the unique irreducible quotient $\style{U}(\delta,2)$ of  $\nu^{1/2}\delta \times \nu^{-1/2}\delta$ is $H$-distinguished.
\end{prop}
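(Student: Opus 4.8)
The plan is to obtain both statements from the machinery set up immediately above, together with two external inputs: Offen's descent of $L^\theta$-invariant functionals to $H$-invariant functionals along parabolic induction \cite[Proposition 7.1]{offen2017}, and the ``disjointness of models'' theorem of Heumos and Rallis \cite[Theorem 3.2.2]{heumos--rallis1990}, which asserts that an irreducible generic representation of $\GL_{2n}(F)$ is never $\Sp_{2n}(F)$-distinguished.

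First I would record the distinction of the full induced representation: this is exactly what the construction carried out just before the statement produces when one takes $\tau = \delta$. Concretely, the standard $\GL_n(F)$-invariant pairing between $\delta$ and its contragredient $\wt\delta$, transported through the identification $\wt\delta \cong \delta \circ \theta_{J_n}$ coming from \cite[Theorem 2]{gelfand--kazhdan1975}, gives a nonzero $\lambda_\delta \in \Hom_{L^\theta}(\nu^{1/2}\delta\otimes\nu^{-1/2}\delta, \nu)$; since $\delta_{Q^\theta}\delta_Q^{-1/2}$ restricts to $\nu$ on $L^\theta \cong \GL_n(F)$, this is precisely the data to which \cite[Proposition 7.1]{offen2017} applies, and it yields a nonzero $\lambda \in \Hom_H(\nu^{1/2}\delta \times \nu^{-1/2}\delta, 1)$. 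Hence $\nu^{1/2}\delta\times\nu^{-1/2}\delta$ is $H$-distinguished.

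It remains to show that $\lambda$ descends to $\style{U}(\delta,2)$. I would use the short exact sequence \eqref{eq-speh-exact}, in which the subrepresentation $\style{Z}(\delta,2)$ is the unique irreducible generic subrepresentation of $\nu^{1/2}\delta\times\nu^{-1/2}\delta$. By \cite[Theorem 3.2.2]{heumos--rallis1990} we have $\Hom_H(\style{Z}(\delta,2),1)=0$, so the restriction of $\lambda$ to the $H$-stable subspace realizing $\style{Z}(\delta,2)$ vanishes. Therefore $\lambda$ factors through the quotient map of \eqref{eq-speh-exact}, producing a nonzero $\bar\lambda \in \Hom_H(\style{U}(\delta,2),1)$; thus $\style{U}(\delta,2)$ is $H$-distinguished.

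There is no genuine obstruction once the cited results are available; the two points needing care are that $\lambda$ is actually nonzero --- which is the substance of \cite[Proposition 7.1]{offen2017} and the reason the argument is not vacuous --- and that one must invoke the disjointness-of-models theorem to rule out the possibility that $\lambda$ is supported on the generic subrepresentation, i.e.\ to guarantee that it is the Speh quotient, rather than $\style{Z}(\delta,2)$, that carries the symplectic period.
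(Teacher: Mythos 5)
Your proposal is correct and follows essentially the same route as the paper: distinction of the full induced representation via the $L^\theta$-invariant pairing (\Cref{prop-L-dist}, Gelfand--Kazhdan) pushed through \cite[Proposition 7.1]{offen2017}, followed by the disjointness-of-models theorem \cite[Theorem 3.2.2]{heumos--rallis1990} applied to the generic subrepresentation $\style{Z}(\delta,2)$ to force the invariant form to descend to the Speh quotient. The only cosmetic difference is that the paper explicitly records that $\style{Z}(\delta,2)$ is tempered, hence generic, by citing \cite[Theorem 9.3]{zelevinsky1980}, whereas you take its genericity as given from the earlier discussion.
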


\begin{proof}
As above, $\nu^{1/2}\delta \times \nu^{-1/2}\delta$ is $H$-distinguished by \Cref{prop-L-dist} and \cite[Proposition 7.1]{offen2017}.  
The parabolically induced representation $\nu^{1/2}\delta \times \nu^{-1/2}\delta$ has length two \cite{bernstein--zelevinsky1977,zelevinsky1980}.  Let $\style{Z}(\delta,2)$ be the unique irreducible subrepresentation  and let $\style{U}(\delta,2)$  be the unique irreducible quotient of  $\nu^{1/2}\delta \times \nu^{-1/2}\delta$.
The subrepresentation $\style{Z}(\delta,2)$ is tempered and thus generic \cite[Theorem 9.3]{zelevinsky1980}.  
Therefore, by \cite[Theorem 3.2.2]{heumos--rallis1990},  $\style{Z}(\delta,2)$ cannot be $H$-distinguished.
 It follows that any nonzero $H$-invariant linear form on $\nu^{1/2}\delta \times \nu^{-1/2}\delta$ descends to a well-defined nonzero $H$-invariant linear functional on the quotient $\style{U}(\delta,2)$.
\end{proof}

\begin{rmk}
By the multiplicity-one result \cite[Theorem 2.4.2]{heumos--rallis1990}, the $H$-invariant linear form on $\nu^{1/2} \tau \times \nu^{-1/2}\tau$ constructed via \cite[Proposition 7.1]{offen2017} is a scalar multiple of the invariant form produced by Heumos and Rallis in \cite[$\S$11.3.1.2]{heumos--rallis1990} (\textit{cf.}~\cite[Lemma 1.3.4]{smith2020a}).
\end{rmk}

\section{Application of the Relative Casselman Criterion}\label{sec-rel-cass}
We now come to the main result of the paper.
\begin{thm}\label{thm-speh-rds}
Let $\delta$ be a discrete series representation of $\GL_n(F)$.  The Speh representation $\style{U}(\delta,2)$ of $\GL_{2n}(F)$ is $\Sp_{2n}(F)$-relatively discrete.
\end{thm}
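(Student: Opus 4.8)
The plan is to apply the Relative Casselman Criterion, \Cref{thm-relative-casselman}, to $\pi=\style{U}(\delta,2)$. Since $\pi$ is irreducible and unitary it is finitely generated and admissible, its central character $\omega_\pi=\omega_\delta^2$ is unitary and (necessarily) trivial on $Z_G\cap H$, and by \Cref{prop-speh-dist} it is $H$-distinguished; fix a nonzero $\lambda\in\Hom_H(\pi,1)$, which is the image of the invariant form on $\nu^{1/2}\delta\times\nu^{-1/2}\delta$ furnished by \cite[Proposition 7.1]{offen2017}. It then suffices to verify the inequality \eqref{eq-rel-cass-condition} for every proper $\theta$-split parabolic subgroup $P=MN$ of $G$.

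The first step is to reduce to the maximal proper $\theta$-split parabolic subgroups. By \Cref{cor-H-cong-pblc} each $P$ is $H$-conjugate to a $\Delta_0$-standard $\theta$-split parabolic $P_\Theta$, and since $\lambda$ is $H$-invariant this conjugation preserves the Jacquet module, the canonical form $\lambda_N$, the torus $S_M$, and its dominant part. The dominant cone of a non-maximal $\theta$-split parabolic $P_\Theta$ is spanned by the rays attached to the maximal $\theta$-split parabolics above it, $\log|\chi|$ is linear on this cone, and — by compatibility of the Jacquet module and of the assignment $\lambda\mapsto\lambda_N$ with iterated parabolic descent (\cite{kato--takano2008}) — restricting a relative exponent of $\pi_{N_\Theta}$ to $S_{\Theta_j}$ yields a relative exponent of $\pi_{N_{\Theta_j}}$ for each maximal $P_{\Theta_j}\supseteq P_\Theta$. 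Hence it is enough to treat the maximal $\theta$-split parabolics $P_{\Theta_k}=w_+P_{(2k,2n-2k)}w_+^{-1}$, $1\le k\le n-1$, of \eqref{eq-max-theta-split-subset}. For these, $S_{\Theta_k}=A_{\Theta_k}$ by \Cref{lem-split-torus}; writing $s=w_+\diag(\alpha I_{2k},\beta I_{2n-2k})w_+^{-1}$ one finds $S_{\Theta_k}^-\setminus S_GS_{\Theta_k}^1=\{\,|\alpha/\beta|_F<1\,\}$, and since $|\chi(s)|$ depends only on $|\alpha/\beta|_F$ the condition \eqref{eq-rel-cass-condition} for $P_{\Theta_k}$ says that every $\chi\in\Exp_{S_{\Theta_k}}(\pi_{N_{\Theta_k}},\lambda_{N_{\Theta_k}})$ has $|\chi(s)|=|\alpha/\beta|_F^{\,e(\chi)}$ with $e(\chi)>0$.

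The second step is to identify the relative exponents. As $\pi$ is a quotient of $\nu^{1/2}\delta\times\nu^{-1/2}\delta=\iota_{P_{(n,n)}}^G(\nu^{1/2}\delta\otimes\nu^{-1/2}\delta)$ and $\lambda$ pulls back to the invariant form attached, via \cite[Proposition 7.1]{offen2017}, to the $L^\theta$-invariant pairing $\lambda_\delta$ on the $\theta$-elliptic Levi $L=M_{(n,n)}$ (with $L^\theta\cong\GL_n(F)$), functoriality of Casselman's canonical lift reduces us to understanding $(\nu^{1/2}\delta\times\nu^{-1/2}\delta)_{N_{\Theta_k}}$ together with its canonical form and then passing to the quotient. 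The Bernstein--Zelevinsky geometric lemma \cite{bernstein--zelevinsky1977,zelevinsky1980} presents this Jacquet module as an iterated extension of subquotients indexed by the double cosets $P_{\Theta_k}\backslash G/P_{(n,n)}$ — equivalently, by how the two copies of $\delta$ are split across the two blocks of $M_{\Theta_k}$ — and the central character of a given piece on $S_{\Theta_k}$ equals $|\alpha/\beta|_F^{\,e}$ for an explicit real $e$ which decomposes as a contribution of the $\nu^{\pm1/2}$-shift plus a contribution controlled by the (well-understood) Jacquet-module exponents of the discrete series $\delta$ of $\GL_n(F)$. The decisive point is that $\lambda_{N_{\Theta_k}}$ is not carried by all of these pieces: invoking a distinguished analogue of the geometric lemma for the canonical invariant form — compatible with the Kato--Takano/Lagier construction of $\lambda\mapsto\lambda_N$ and with the fact that $\lambda_\delta$ is the pairing between $\delta$ and $\wt\delta\cong\delta\circ\theta_{J_n}$ on $L^\theta\cong\GL_n(F)$ — I would establish that $\lambda_{N_{\Theta_k}}$ is supported only on the $\theta$-relevant double cosets. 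In particular the ``wrong-order'' constituent $\nu^{-1/2}\delta\otimes\nu^{1/2}\delta$ occurring in $\pi_{N_{\Theta_{n/2}}}$ when $n$ is even, whose exponent is $|\alpha/\beta|_F^{-n/2}$ and therefore exceeds $1$ on $S_{\Theta_{n/2}}^-\setminus S_GS_{\Theta_{n/2}}^1$, does not lie in $\Exp_{S_{\Theta_{n/2}}}(\pi_{N_{\Theta_{n/2}}},\lambda_{N_{\Theta_{n/2}}})$.

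For the surviving pieces the final step is a direct estimate of $e(\chi)$: on the $\theta$-relevant cosets the $\nu^{1/2}$-shift contributes with the favourable sign and the contribution of $\delta$ is strictly dominant-decaying, by the ordinary Casselman criterion applied to the discrete series $\delta$; hence $e(\chi)>0$, so $|\chi(s)|<1$ on $S_{\Theta_k}^-\setminus S_GS_{\Theta_k}^1$ for every $1\le k\le n-1$. By \Cref{thm-relative-casselman}, $\pi$ is $(H,\lambda)$-relatively square integrable, and since $\dim\Hom_H(\pi,1)\le1$ by \cite[Theorem 2.4.2]{heumos--rallis1990} it is $H$-relatively square integrable, that is, $\Sp_{2n}(F)$-relatively discrete. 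I expect the second step to be the real obstacle: unlike in the absolute setting the ``wrong-order'' exponent genuinely appears in the Jacquet module of $\style{U}(\delta,2)$ along a $\theta$-split parabolic — this is precisely why $\style{U}(\delta,2)$ fails to be square integrable as a representation of $\GL_{2n}(F)$ — and relative square integrability can hold only because the distinguished functional does not see it; establishing this demands tight control of the support of $\lambda_N$ on the Jacquet module, either through such a distinguished geometric lemma or through an explicit computation with Casselman's canonical lift and the Heumos--Rallis/Offen invariant form.
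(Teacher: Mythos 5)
Your skeleton matches the paper's: apply the Relative Casselman Criterion, reduce to the maximal standard $\theta$-split parabolics $P_{\Theta_k}=w_+P_{(2k,2n-2k)}w_+^{-1}$, analyze $(\nu^{1/2}\delta\times\nu^{-1/2}\delta)_{N_{\Theta_k}}$ via the geometric lemma, and observe that the only dangerous contribution comes from the full-Levi double coset at $k=n/2$ (for $n$ even). But at precisely that point you write that you ``would establish'' a distinguished analogue of the geometric lemma showing $\lambda_{N_{\Theta_k}}$ is supported only on the $\theta$-relevant cosets, and you yourself flag this as the real obstacle. As written this is a genuine gap: without that step the wrong-order exponent $|\alpha/\beta|_F^{-n/2}$ has not been excluded from $\Exp_{S_{\Theta_{n/2}}}(\style{U}(\delta,2)_N,\lambda_N)$, and condition \eqref{eq-rel-cass-condition} cannot be verified, so the criterion does not apply.

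The paper closes exactly this gap without any new geometric lemma, by combining two available facts. First, by \cite[Proposition 4.23]{smith2018} (a consequence of the Kato--Takano/Lagier construction of $\lambda\mapsto\lambda_N$), only exponents attached to irreducible $M^\theta$-distinguished subquotients of the Jacquet module can occur in $\Exp_{S_M}(\cdot,\lambda_N)$. Second (\Cref{prop-no-dist}), in the only case where a full-Levi coset arises ($n$ even, $k=n/2$) one has $M_{\Theta_{n/2}}^\theta\cong\Sp_n(F)\times\Sp_n(F)$, the relevant subquotients are the twists ${}^{y}(\nu^{\pm1/2}\delta\otimes\nu^{\mp1/2}\delta)$ with each tensor factor generic, and the disjointness of Whittaker and symplectic models \cite[Theorem 3.2.2]{heumos--rallis1990} shows that neither ordering (not just the wrong-order one) is $M^\theta$-distinguished; hence that coset contributes no relative exponents at all. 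For the remaining cosets, \Cref{prop-good-exp} carries out the estimate you sketch, with one correction to your last step: the unramified factor $\nu^{1/2}\otimes\nu^{-1/2}$ is only shown to satisfy $|\nu^{1/2}\otimes\nu^{-1/2}({w'}^{-1}aw')|\le 1$ on the relevant dominant cone, not to contribute with a strictly favourable sign; the strict inequality comes from Casselman's criterion applied to the discrete series $\delta$, whose Jacquet restriction there is along a proper parabolic. So your plan is the right one, but to become a proof the conjectural ``distinguished geometric lemma'' must be replaced by the vanishing of $\lambda_N$ on non-distinguished subquotients together with the Heumos--Rallis disjointness theorem.
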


\begin{proof}
Let $\lambda \in \Hom_H(\style{U}(\delta,2), 1)$ be nonzero.
Let $\pi = \nu^{1/2}\delta \times \nu^{-1/2}\delta$.  Recall from \Cref{sec-speh} that $\style{U}(\delta,2)$ is the unique irreducible quotient of $\pi$.
By \Cref{cor-H-cong-pblc} and \cite[Proposition 4.22]{smith2018}, it is enough to consider exponents along maximal standard $\theta$-split parabolic subgroups of $G$ when applying \Cref{thm-relative-casselman}  (\cite[Theorem 4.7]{kato--takano2010}).
Let $P=MN$ be a maximal $\Delta_0$-standard $\theta$-split parabolic subgroup of $G$ with unipotent radical $N$ and $\theta$-stable Levi factor $M=P\cap\theta(P)$.
By \cite[Proposition 4.23]{smith2018}, only exponents corresponding to irreducible $M^\theta$-distinguished subquotients of the Jacquet module $\style{U}(\delta,2)_{N}$ may appear in $\Exp_{S_M}(\style{U}(\delta,2)_{N}, \lambda_{N})$.
By \Cref{prop-no-dist}, the irreducible unitary subquotients of $\pi_N$, and also $\style{U}(\delta,2)$, are not $M^\theta$-distinguished.
By \Cref{prop-good-exp}, all exponents that appear in $\Exp_{S_M}(\style{U}(\delta,2)_{N}, \lambda_{N})$ satisfy \eqref{eq-rel-cass-condition}.
By \Cref{thm-relative-casselman}, $\style{U}(\delta,2)$ is $(H,\lambda)$-relatively square integrable.
Multiplicity-one holds by \cite[Theorem 2.4.2]{heumos--rallis1990}, thus $\dim \Hom_H(\style{U}(\delta,2),1) = 1$ and $\style{U}(\delta,2)$ is $H$-relatively square integrable.
\end{proof}

The remainder of the paper is dedicated to proving \Cref{prop-no-dist} and  \Cref{prop-good-exp}.

Let $\delta$ be an irreducible admissible square integrable (discrete series) representation of $\GL_n(F)$.
Let $\pi = \nu^{1/2}\delta \times \nu^{-1/2}\delta$.
The sequence
\begin{align*}
0 \rightarrow \style{Z}(\delta,2) \rightarrow 	\pi \rightarrow \style{U}(\delta,2) \rightarrow 0,
\end{align*}
of $G$-modules is exact, where $\style{Z}(\delta,2)$ is the unique irreducible generic subrepresentation of $\pi$ (see \Cref{sec-speh}).
We keep the notation of \Cref{sec-tori-pblc} and let $Q=P_{(n,n)}$, $L=M_{(n,n)}$, and $U = N_{(n,n)}$.
Let $P = MN$ be a maximal $\Delta_0$-standard $\theta$-split parabolic subgroup of $G$, with unipotent radical $N$ and  $\theta$-stable Levi factor $M = P\cap\theta(P)$. 
The Jacquet restriction functor (along $P$) is exact; therefore, we have an exact sequence of $M$-modules
\begin{align}\label{eq-jacquet-exact}
0 \rightarrow \style{Z}(\delta,2)_N \rightarrow 	\pi_N \rightarrow \style{U}(\delta,2)_N \rightarrow 0.
\end{align}
Our goal is to understand the irreducible subquotients, and the exponents, of $\style{U}(\delta,2)_N$ by applying the Geometric Lemma \cite[Lemma 2.12]{bernstein--zelevinsky1977} to $\pi_N$.
If $\chi \in \Exp_{A_M}(\style{U}(\delta,2)_N)$, then $\chi$ is the central quasi-character of an irreducible subquotient of $\style{U}(\delta,2)_N$ and thus of $\pi_N$, that is, $\chi$ appears in $\Exp_{A_M}(\pi_N)$.
Recall that we can realize $Q = w_+^{-1} P_\Omega w_+$, where $\Omega = \Delta_0 \setminus \{ w_+(\epsilon_n - \epsilon_{n+1}) \}$, and $P = P_\Theta$, for some $1 \leq k \leq n-1$, where $\Theta = \Theta_k$ is described in \eqref{eq-max-theta-split-subset}.
In particular, $\Omega$ and $\Theta$ are subsets of the $\theta$-base $\Delta_0$.
Let 
\begin{align*}
	[W_\Theta \backslash W_0 / W_\Omega] & = \{ w \in W_0 : w \Omega \subset \Phi_0^+, w^{-1}\Theta \subset \Phi_0^{-1} \},
\end{align*}
where $\Phi_0^+$ is the set of $\Delta_0$-positive roots.
By \cite[Propositions 1.3.1 and 1.3.3]{Casselman-book}, the set $[W_\Theta \backslash W_0 / W_\Omega]\cdot w_+$ is a system of representatives for $P \backslash G / Q$.
By the Geometric Lemma \cite[Lemma 2.12]{bernstein--zelevinsky1977}, there exists a filtration of the space of $\pi_N$ such that the associated graded object $\operatorname{gr}(\pi_N)$ is isomorphic to
\begin{align}\label{eq-full-ind-geom-lemma}
	\bigoplus_{y \in [W_\Theta \backslash W_0 / W_\Omega]\cdot w_+} \iota_{M\cap {}^yQ}^M \left( {}^y (\nu^{1/2}\delta\otimes\nu^{-1/2}\delta)_{N \cap {}^yL} \right).
\end{align}
Write $\mathcal F_N^y(\delta,2)$ to denote the representation $\iota_{M\cap {}^yQ}^M \left( {}^y (\nu^{1/2}\delta\otimes\nu^{-1/2}\delta)_{N \cap {}^yL} \right)$.
Thus 
\begin{align*}
	\operatorname{gr}(\pi_N)
	& \cong \bigoplus_{w \in [W_\Theta \backslash W_0 / W_\Omega]} \mathcal F_N^{ww_+}(\delta,2).
\end{align*}
The exponents of $\pi$ along $P$ are the central characters of the irreducible subquotients of $\pi_N$; moreover, the exponents of $\style{U}(\delta,2)$ along $P$ are a subset of the of the exponents of $\pi$ along $P$.
Recall that, by \Cref{lem-split-torus}, the $(\theta,F)$-split component $S_M$ of $M$ is equal to its $F$-split component $A_M$; precisely, 
\begin{align*}
A_M & = w_+\{\diag(\underbrace{a,\ldots,a}_{2k},\underbrace{b,\ldots,b}_{2n-2k}) : a,b \in F^\times \}w_+^{-1} \\
& = \{\diag(\underbrace{a,\ldots,a}_{k},\underbrace{b,\ldots,b}_{2n-2k},\underbrace{a,\ldots,a}_{k}) : a,b \in F^\times \}.
\end{align*}
By \cite[Proposition 1.3.3]{Casselman-book}, with our choice $[W_\Theta \backslash W_0 / W_\Omega]\cdot w_+$ of representatives for $P \backslash G / Q$, if $y = ww_+$ where $w \in [W_\Theta \backslash W_0 / W_\Omega]$, then $M \cap {}^y Q$ is a parabolic subgroup of $M$ with Levi factor $M \cap {}^y L$ and unipotent radical $M \cap {}^y U$.  Similarly, $P \cap {}^y L$ is a parabolic subgroup of $L$ with Levi subgroup $M \cap {}^yL$ and unipotent radical $N \cap {}^y L$.
Explicitly, since $P = P_\Theta$ and $Q = w_+^{-1} P_\Omega w_+$, we see that
\begin{align*}
M \cap {}^yL & = M_\Theta \cap w M_\Omega w^{-1}
 = M_\Theta \cap M_{w\Omega} 
 = M_{\Theta \cap w\Omega},	
\end{align*}
\begin{align*}
N \cap {}^yL & = N_\Theta \cap w M_\Omega w^{-1}
 = N_\Theta \cap M_{w\Omega},
\end{align*}
and
\begin{align*}
M \cap {}^yU & = M_\Theta \cap w N_\Omega w^{-1}
= M_\Theta \cap N_{w\Omega}.
\end{align*}

Let  $w\in [W_\Theta \backslash W_0 / W_\Omega]$.  To achieve our goal, there are two cases that we need to consider.  
\begin{description}
	\item[Case 1]  $P_\Theta \cap {}^wM_\Omega = {}^wM_\Omega$.
	\item[Case 2]   $P_\Theta \cap {}^w M_\Omega \subsetneq {}^wM_\Omega$ is a proper parabolic subgroup of ${}^wM_\Omega$
\end{description}
In Case 1, we show that the associated irreducible subquotients of $\pi_N$ are not $M^\theta$-distinguished.
In Case 2, we show that the corresponding exponents of $\pi_N$ satisfy the condition \eqref{eq-rel-cass-condition}.
The exact sequence \eqref{eq-jacquet-exact} allows us to conclude that the same holds for $\style{U}(\delta,2)_N$.

\subsection{Case 1: no distinction}\label{sec-case-1}
Assume that $w\in [W_\Theta \backslash W_0 / W_\Omega]$ is such that $P_\Theta \cap {}^wM_\Omega = {}^wM_\Omega$.
Then $N_\Theta \cap {}^wM_\Omega = \{e\}$ and $M_\Theta \cap {}^w M_\Omega = {}^w M_\Omega = M_{w\Omega}$.
In particular, ${}^w M_\Omega \subset M_\Theta$ and since ${}^w M_\Omega$ is maximal it follows that ${}^wM_\Omega = M_\Theta \cong \GL_n(F) \times \GL_n(F)$.
Moreover, $M_\Theta$ and $M_\Omega$ are associate standard Levi subgroups isomorphic to $\GL_n(F) \times \GL_n(F)$.  
It follows that $n$ must be even, $k = n/2$, and $\Theta = \Theta_{n/2} = w_+ (\Delta \setminus \{ \epsilon_n - \epsilon_{n+1} \} ) = \Omega$.
That is, $M_\Theta = M_\Omega$ and $w$ lies in $[W_\Omega \backslash W_0 / W_\Omega ] \cap W(\Omega,\Omega)$, where $W(\Theta,\Omega) = \{ w \in W_0 : w\Omega = \Theta\}$.
Set $y = ww_+$.
Then $M_\Omega \cap {}^y Q = M_{w\Omega} = M_\Omega$ and $P_\Omega \cap {}^y L = M_{w\Omega} = M_\Omega$.
In this setting, 
\begin{align*}
	\mathcal F^y_\Omega	(\delta,2) 
	& = \iota_{M_\Omega}^{M_\Omega}({}^y(\nu^{1/2}\delta\otimes\nu^{-1/2}\delta)_{\{e\}} )
	 = {}^y(\nu^{1/2}\delta\otimes\nu^{-1/2}\delta),
\end{align*}
since $N_\Omega \cap {}^w_\Omega = N_\Omega \cap M_\Omega = \{e\}$.

\begin{prop}\label{prop-no-dist}
	Let $w \in [W_\Omega \backslash W_0 / W_\Omega ] \cap W(\Omega,\Omega)$ and set $y = ww_+$. Let $\tau$ be an irreducible admissible generic representation of $\GL_n(F)$.  The representation ${}^y(\nu^{1/2}\tau\otimes\nu^{-1/2}\tau)$ of $M_\Omega$ is not $M_\Omega^\theta$-distinguished, that is, $\Hom_{M_\Omega^\theta}({}^y(\nu^{1/2}\tau\otimes\nu^{-1/2}\tau),1) = 0$.
\end{prop}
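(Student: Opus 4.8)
The plan is to reduce the assertion to the incompatibility of genericity with symplectic distinction for general linear groups, namely \cite[Theorem 3.2.2]{heumos--rallis1990}. Recall that in Case~1 the integer $n$ is even, $\Theta = \Omega$, and $M_\Omega = w_+ L w_+^{-1} \cong \GL_n(F)\times\GL_n(F)$, where $L = M_{(n,n)}$ is identified with $\GL_n(F)\times\GL_n(F)$ in the usual block-diagonal way. The first step is to describe the fixed-point group $M_\Omega^\theta$ explicitly. Since $\theta_{x_{2n}} = w_+\cdot\theta$, one has $\theta(w_+\ell w_+^{-1}) = w_+\,\theta_{x_{2n}}(\ell)\,w_+^{-1}$ for $\ell\in L$, and hence $M_\Omega^\theta = w_+ L^{\theta_{x_{2n}}} w_+^{-1}$. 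Now, because $n$ is even, the skew-symmetric matrix $x_{2n}$ of \Cref{sec-tori-pblc} is block-diagonal for the partition $(n,n)$: writing $x_{2n} = \diag(x_n, x_n)$ with $x_n\in\GL_n(F)$ the nonsingular skew-symmetric matrix built from $n/2$ copies of $\left(\begin{smallmatrix}0&1\\-1&0\end{smallmatrix}\right)$, we see that $\theta_{x_{2n}}$ acts on $L\cong\GL_n(F)\times\GL_n(F)$ one factor at a time, so that $L^{\theta_{x_{2n}}}\cong\Sp_n(F)\times\Sp_n(F)$ with the $i$-th symplectic factor contained in the $i$-th general linear factor of $L$. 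This coordinate-wise alignment is the structural input I would rely on.

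Next I would run a standard twist-of-functionals argument. Put $y = ww_+$ and regard $\sigma = \nu^{1/2}\tau\otimes\nu^{-1/2}\tau$ as a representation of $L\cong\GL_n(F)\times\GL_n(F)$. Since $w\in W(\Omega,\Omega)$ normalizes $M_\Omega$ we have ${}^yL = M_\Omega$, and precomposing with conjugation by $y$ gives the identification $\Hom_{M_\Omega^\theta}({}^y\sigma, 1)\cong\Hom_{y^{-1}M_\Omega^\theta y}(\sigma, 1)$. Writing $v_0 = w_+^{-1}ww_+$, a permutation matrix that normalizes $L = M_{(n,n)}$, we get $y^{-1}M_\Omega^\theta y = v_0^{-1}\big(L^{\theta_{x_{2n}}}\big)v_0$, and since $v_0$ either preserves or interchanges the two linear blocks of $L$, a short calculation shows that $y^{-1}M_\Omega^\theta y$ is again an ``aligned'' product $H_1\times H_2$, with each $H_i\cong\Sp_n(F)$ sitting inside the $i$-th general linear factor of $L$. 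In particular $H_1\times\{e\}$ acts on $\sigma$ only through its first tensor factor $\nu^{1/2}\tau$.

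The conclusion would then follow by contradiction. Suppose $\lambda$ is a nonzero element of $\Hom_{H_1\times H_2}(\nu^{1/2}\tau\otimes\nu^{-1/2}\tau, 1)$. For any vector $v_2$ of $\nu^{-1/2}\tau$, the linear form $v_1\mapsto\lambda(v_1\otimes v_2)$ lies in $\Hom_{H_1}(\nu^{1/2}\tau, 1)$. But $\nu^{1/2}\tau$ is an irreducible admissible generic representation of $\GL_n(F)$ with $n$ even (it is a character twist of the generic representation $\tau$), and $H_1$ is a symplectic group of $\GL_n(F)$, so $\Hom_{H_1}(\nu^{1/2}\tau, 1) = 0$ by \cite[Theorem 3.2.2]{heumos--rallis1990}. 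Hence $\lambda$ annihilates every pure tensor, so $\lambda = 0$ --- a contradiction --- and $\Hom_{M_\Omega^\theta}({}^y(\nu^{1/2}\tau\otimes\nu^{-1/2}\tau), 1) = 0$, as required.

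The step I expect to demand the most care is the first one: checking that $M_\Omega^\theta$, and then its conjugate $y^{-1}M_\Omega^\theta y$, really are \emph{block-diagonal} products of two symplectic groups, one inside each $\GL_n(F)$-factor of $L$. This is precisely where the parity hypothesis of Case~1 enters, through the block structure of $x_{2n}$; once it is in place the remaining steps are routine, with the Heumos--Rallis genericity obstruction supplying the vanishing.
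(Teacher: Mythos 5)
Your proof is correct and follows essentially the same route as the paper: identify $M_\Omega^\theta$ (via the block-diagonal structure of $x_{2n}$, which is exactly where the parity of $n$ enters) as a product of symplectic groups aligned with the two $\GL_n(F)$-factors, then kill any invariant functional by applying the Heumos--Rallis disjointness of Whittaker and symplectic models to the generic representations $\nu^{\pm 1/2}\tau$. The only real difference is cosmetic: the paper explicitly determines that $[W_\Omega\backslash W_0/W_\Omega]\cap W(\Omega,\Omega)$ consists of $e$ and $w_+w_{(n,n)}w_+^{-1}$ and treats the two cases separately, whereas you bypass this enumeration by noting that any $w\in W(\Omega,\Omega)$ normalizes $M_\Omega$, so conjugation by $v_0=w_+^{-1}ww_+$ at worst permutes the two blocks and the aligned-product structure persists.
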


\begin{proof}
First, recall that $n$ is even, and observe that $M_\Omega^\theta \cong \Sp_n(F) \times \Sp_n(F)$.
Indeed, $M_\Omega = w_+M_{(n,n)}w_+^{-1}$ and $m = w_+\underline{m}w_+^{-1} \in M_\Omega$ is $\theta$-fixed if and only if $\underline{m} \in M_{(n,n)}$ is fixed by $w_+\cdot \theta = \theta_{x_{2n}}$.
Recall (see \Cref{sec-tori-pblc}) that 
\begin{align*}x_{2n} = \ve_{2n}\cdot w_+ = \left ( \begin{matrix} 0 & 1 & & &	\\
 						   -1 & 0 & & & \\
 						   & & \ddots & & \\
 						   	& & & 0 & 1 \\
 						  & & & -1 & 0
 \end{matrix} \right) \in M_{(n,n)}
 \end{align*}
 and $\theta_{x_{2n}}(g) = x_{2n}^{-1} \tran g^{-1} x_{2n}$.
 One may readily verify that the image of $\underline{m}=\diag(m_1,m_2) \in M_{(n,n)}$ under $\theta_{x_{2n}}$ is given by
 \begin{align*}
  \theta_{x_{2n}}(\underline{m}) = \diag(x_n^{-1} \tran m_1^{-1} x_n, 	x_n^{-1} \tran m_2^{-1} x_n) = \diag(\theta_{x_n}(m_1),\theta_{x_n}(m_2)).
 \end{align*}
 It follows that $\underline{m}$ is $\theta_{x_{2n}}$-fixed if and only if $m_i = \theta_{x_n}(m_i)$, for each $i=1,2$.
 Moreover, 
 \begin{align*}
 M_\Omega^\theta & = w_+ \left(M_{(n,n)}^{\theta_{x_{2n}}} \right) w_+^{-1}	 \\
 & = w_+ \left( \GL_n(F)^{\theta_{x_n}} \times \GL_n(F)^{\theta_{x_n}} \right) w_+^{-1} \\
 & \cong  \left( \Sp_n(F) \times \Sp_n(F) \right),
 \end{align*}
since $x_n \in \GL_n(F)$ is nonsingular and skew symmetric, and $ \GL_n^{\theta_{x_n}}  \cong \Sp_n$.

Next, we note that $[W_\Omega \backslash W_0 / W_\Omega ] \cap W(\Omega,\Omega)$ consists of two elements: the identity $e$ and $w_+w_{(n,n)}w_+^{-1}$, where
\begin{align*}
w_{(n,n)} & = \left(\begin{matrix} 0 & I_n \\ I_n & 0 \end{matrix}\right).	
\end{align*}
First, realize $[W_\Omega \backslash W_0 / W_\Omega] = w_+ [W_{\Omega^{\operatorname{ell}}} \backslash W_0 / W_{\Omega^{\operatorname{ell}}} ] w_+^{-1}$, and $W(\Omega, \Omega) = w_+W({\Omega^{\operatorname{ell}}},{\Omega^{\operatorname{ell}}})w_+^{-1}$, where ${\Omega^{\operatorname{ell}}}  = \Delta \setminus \{\epsilon_n-\epsilon_{n+1}\} = w_+^{-1}\Omega$.  If $w\in W_0$, then we identify $w$ with a permutation of $\{1,\ldots, 2n\}$ and note that $w(\epsilon_i) = \epsilon_{w(i)}$.
The set of $\Delta$-positive roots in $\Phi_0$ is $\Phi_\Delta^+ = \{ \epsilon_i - \epsilon_j : 1\leq i < j \leq 2n \}$.
Thus, by definition, $w \in W_0$ lies in the set $[W_{\Omega^{\operatorname{ell}}} \backslash W_0 / W_{\Omega^{\operatorname{ell}}}]$ if and only if $w(i) < w(i+1)$ and $w^{-1}(i) < w^{-1}(i+1)$, for all $1 \leq i \leq n-1$ and $n+1 \leq i \leq 2n-1$   (with $i\neq n$ since $\epsilon_n-\epsilon_{n+1} \notin \Omega^{\operatorname{ell}}$).
It is not difficult to verify that $[W_{\Omega^{\operatorname{ell}}} \backslash W_0 / W_{\Omega^{\operatorname{ell}}}]$ consists of the $n+1$ permutation matrices of the form
\begin{align}\label{eq-nice-permutations-case-1}
	\left(
	\begin{matrix} I_j & 0 & 0 & 0 \\
					0 & 0 & I_{n-j} & 0 \\
					0& I_{n-j} & 0 & 0 \\
					0 & 0 & 0 & I_j 	
	\end{matrix}
	\right),
\end{align}
where $0 \leq j \leq n$. Notice that $j=0$ corresponds to $w_{(n,n)}$ and $j=n$ corresponds to the identity matrix $e = I_{2n}$.  On the other hand, the elements of the set $W(\Omega^{\operatorname{ell}},\Omega^{\operatorname{ell}})$ satisfy $w\Omega^{\operatorname{ell}}=\Omega^{\operatorname{ell}}$ and thus normalize the block-diagonal Levi subgroup  $M_{(n,n)} = M_{\Omega^{\operatorname{ell}}}$.  One may quickly check that, of the elements of the form in \eqref{eq-nice-permutations-case-1}, only the identity $e$ and $w_{(n,n)}$ normalize $M_{(n,n)}$.
It follows that $[W_{\Omega^{\operatorname{ell}}} \backslash W_0 / W_{\Omega^{\operatorname{ell}}}] \cap W({\Omega^{\operatorname{ell}}},{\Omega^{\operatorname{ell}}})$ consists of precisely $e$ and $w_{(n,n)}$, proving the claim.

We now turn to studying the $M_\Omega^\theta$-distinction of $\mathcal F^y_\Omega(\tau,2)={}^y(\nu^{1/2}\tau\otimes\nu^{-1/2}\tau)$, where $y=ww_+$. 
There are two sub-cases to consider, either $w=e$ or $w=w_+w_{(n,n)}w_+^{-1}$.
If $w=e$, then  $y = w_+ \in [W_\Omega \backslash W_0 / W_\Omega]\cap W(\Omega,\Omega)$.   
As above, $\mathcal F^y_\Omega(\tau,2)={}^{w_+}(\nu^{1/2}\tau\otimes\nu^{-1/2}\tau)$.
If $w=w_{(n,n)}$, then $y=ww_+ = w_+w_{(n,n)}w_+^{-1}w_+ = w_+w_{(n,n)}$. 
It follows that $\mathcal F^y_\Omega(\tau,2)={}^{w_+w_{(n,n)}}(\nu^{1/2}\tau\otimes\nu^{-1/2}\tau)$.  Conjugation by $w_{(n,n)}$ interchanges the two $\mathrm{GL}$-blocks of $M_\Omega = M_{(n,n)}$; therefore, twisting a representation $\pi_1\otimes\pi_2$ of $M_\Omega$ by $w_{(n,n)}$ interchanges the two representations, that is, ${}^{w_{(n,n)}}(\pi_1\otimes\pi_2) \cong \pi_2\otimes\pi_1$.  Therefore, 
	${}^{w_+w_{(n,n)}}(\nu^{1/2}\tau\otimes\nu^{-1/2}\tau) = {}^{w_+}(\nu^{-1/2}\tau\otimes\nu^{1/2}\tau)$.
We have seen above that $M_\Omega^\theta \cong  \left( \Sp_n(F) \times \Sp_n(F) \right)$.
In both cases ($w=e, w = w_{(n,n)}$), it follows that $\mathcal{F}_\Omega^y(\tau,2)$ is $M_\Omega^\theta$-distinguished if and only if $\nu^{1/2}\tau$ and $\nu^{-1/2}\tau$ are $\Sp_n(F)$-distinguished.
By assumption, $\tau$ is an irreducible generic representation; therefore, by \cite[Theorem 3.2.2]{heumos--rallis1990}, $\Hom_{\Sp_n(F)}(\tau,1) = \{0\}$.  It follows, since $\nu$ is trivial on (maximal) unipotent subgroups of $\GL_n(F)$, that $\nu^s\tau$ is generic and $\Hom_{\Sp_n(F)}(\nu^{s}\tau,1) = \{0\}$, for every $s\in \C$.  Moreover, if $w$ is equal to either $e$ or $w_{(n,n)}$, then $\Hom_{M_\Omega^\theta}(\mathcal{F}_\Omega^y(\tau,2),1) = 0$, as claimed.
\end{proof}

\subsection{Case 2: `good' exponents}\label{sec-case-2}
Assume that $w\in [W_\Theta \backslash W_0 / W_\Omega]$ is such that $P_\Theta \cap {}^w M_\Omega$ is a proper parabolic subgroup of ${}^wM_\Omega$.
First, we show that $M_\Theta \cap {}^w P_\Omega$ is also a proper parabolic subgroup of $M_\Theta$.  We argue by contradiction, and suppose that $M_\Theta \cap {}^w P_\Omega = M_\Theta$.  By \cite[Proposition 1.3.3]{Casselman-book}, $M_\Theta \cap {}^w N_\Omega = \{e\}$ and $M_\Theta \cap {}^w M_\Omega = M_\Theta$.  In particular, $M_\Theta \subset {}^w M_\Omega = M_{w\Omega}$. However, both $M_\Omega$ and $M_\Theta$ are maximal Levi subgroups of $G$, and it follows that $M_\Theta = {}^wM_\Omega$.  This, in turn, implies that $P_\Theta \cap {}^wM_\Omega =M_\Theta = {}^w M_\Omega$ which contradicts our assumption that $P_\Theta \cap {}^w M_\Omega$ is a proper parabolic subgroup of ${}^wM_\Omega$.  We conclude that $M_\Theta \cap {}^w P_\Omega$ is a proper parabolic subgroup of $M_\Theta$. 

It follows from this last observation that if $y=ww_+$, then the representation $\mathcal F_N^y(\delta,2)=\iota_{M\cap {}^yQ}^M \left( {}^y (\nu^{1/2}\delta\otimes\nu^{-1/2}\delta)_{N \cap {}^yL} \right)$ is induced from ${}^y (\nu^{1/2}\delta\otimes\nu^{-1/2}\delta)_{N \cap {}^yL}$ along the proper parabolic $M\cap {}^yQ = M_\Theta \cap {}^wP_\Omega$ of $M=M_\Theta$; moreover, the Jacquet module ${}^y (\nu^{1/2}\delta\otimes\nu^{-1/2}\delta)_{N \cap {}^yL}$ is taken along the proper parabolic $P \cap {}^yL=P_\Theta \cap {}^wM_\Omega$ of ${}^y L={}^wM_\Omega$.
That is, both the Jacquet restriction and parabolic induction steps appearing in $\mathcal F_N^y(\delta,2)$ are along proper parabolic subgroups.
To be completely explicit, we note that
\begin{align*}
	\mathcal F_N^{ww_+}(\delta,2) = \iota_{M_\Theta \cap {}^wP_\Omega}^{M_\Theta} \left( {}^{ww_+} (\nu^{1/2}\delta\otimes\nu^{-1/2}\delta)_{N_\Theta \cap {}^wM_\Omega}\right).
\end{align*}
In this subsection, we will use the shorthand notation $[\tau]=\nu^{1/2}\tau\otimes \nu^{-1/2}\tau$, where $\tau$ is an irreducible admissible representation of $\GL_n(F)$.
Our goal is to compute the exponents of $\pi=\nu^{1/2}\delta\times\nu^{-1/2}\delta$ along $P=P_\Theta$; therefore, we need to understand the central characters of the irreducible subquotients of the $\mathcal F^{ww_+}(\delta,2)$.
By \cite[Lemma 4.16]{smith2018}, the quasi-characters appearing in $\Exp_{A_\Theta}(\mathcal F^{ww_+}(\delta,2))$ are the restrictions to $A_\Theta$ of the quasi-characters appearing in $\Exp_{A_{\Theta \cap w_\Omega}}({}^{ww_+}[\delta]_{N_\Theta\cap{}^wM_\Omega})$, where the $F$-split component of $M_\Theta \cap {}^w M_\Omega = M_{\Theta \cap w\Omega}$ is $A_{\Theta \cap w\Omega}$.  
Thus, our problem reduces to understanding the exponents of ${}^{ww_+}[\delta]$ along $P_\Theta\cap{}^wM_\Omega$.

Since $L = M_{(n,n)} \cong \GL_n(F) \times \GL_n(F)$, we have that $P_\Theta\cap{}^wM_\Omega \cong P_1 \times P_2$, where $P_1$ and $P_2$ are parabolic subgroups of $\GL_n(F)$, at least one of which is proper.
We can realize $w = w_+ w' w_+^{-1} \in [W_\Theta \backslash W_0 / W_\Omega]$, where $w' \in [W_{(2k,2n-2k)} \backslash W_0 / W_{(n,n)}]$.
Then, with $w = w_+ w' w_+^{-1}$,
\begin{align*}
 \left({}^{ww+} [\delta]\right)_{N_\Theta \cap {}^wM_\Omega} 
	& =  \left({}^{w_+ w'} [\delta]\right)_{w_+N_{(2k,2n-2k)}w_+^{-1} \cap w_+ w' M_{(n,n)}  w'^{-1} w_+^{-1}} \\	
	& =  \left({}^{w_+ w'} [\delta]\right)_{w_+\left(N_{(2k,2n-2k)} \cap w' M_{(n,n)}  w'^{-1}\right)w_+^{-1}} \\
	& =  {}^{w_+}\left( {}^{w'}[\delta]_{N_{(2k,2n-2k)} \cap w' M_{(n,n)}  w'^{-1}} \right)\\
	& =  {}^{w_+w'}\left( [\delta]_{w'^{-1}N_{(2k,2n-2k)} w'  \cap M_{(n,n)}} \right)\\
	& =  {}^{w_+w'}\left( [\delta]_{N_1\times N_2} \right),
\end{align*}
where we identify $P_\Theta\cap{}^wM_\Omega \cong P_1 \times P_2$ with a parabolic subgroup of $M_{(n,n)} \cong \GL_n(F) \times \GL_n(F)$ via
\begin{align*}
& P_1\times P_2  = w'^{-1}N_{(2k,2n-2k)} w'  \cap M_{(n,n)} \\
& = w'^{-1}\left( N_{(2k,2n-2k)}   \cap w'M_{(n,n)} w'^{-1} \right)w' \\
& = w'^{-1}w_+^{-1}\left( w_+N_{(2k,2n-2k)}w_+^{-1}   \cap w_+w'M_{(n,n)} w'^{-1}w_+^{-1} \right)w_+w' \\
& = w'^{-1}w_+^{-1}\left( w_+N_{(2k,2n-2k)}w_+^{-1}   \cap w_+w'w_+^{-1}w_+M_{(n,n)}w_+^{-1} w_+w'^{-1}w_+^{-1} \right)w_+w' \\
& = w'^{-1}w_+^{-1}\left( w_+N_{(2k,2n-2k)}w_+^{-1}   \cap ww_+M_{(n,n)}w_+^{-1} w^{-1} \right)w_+ w'\\
& = w'^{-1}w_+^{-1}\left( N_{\Theta}   \cap wM_{\Omega} w^{-1} \right)w_+w' \\
& = w_+^{-1}w^{-1}\left( N_{\Theta}   \cap wM_{\Omega} w^{-1} \right)w_+w,
\end{align*}
using that  $ww_+ = w_+w'w_+^{-1}w_+ = w_+w'$.
It follows that  
\begin{align*}
\left({}^{ww+} [\delta]\right)_{N_\Theta \cap {}^wM_\Omega} 
	& =  {}^{w_+w'}\left( [\delta]_{N_1\times N_2} \right)\\
	& =  {}^{w_+w'}\left( (\nu^{1/2}\delta \otimes \nu^{-1/2}\delta)_{N_1\times N_2} \right)\\
	& = {}^{w_+w'}\left( \nu^{1/2}\delta_{N_1} \otimes \nu^{-1/2}\delta_{N_2} \right)\\
	& = {}^{ww_+}\left( \nu^{1/2}\delta_{N_1} \otimes \nu^{-1/2}\delta_{N_2} \right),
\end{align*}
where in the final equality we have again used that $ww_+ = w_+w'$.
In the above calculation of $\left({}^{ww+} [\delta]\right)_{N_\Theta \cap {}^wM_\Omega}$, we also implicitly used the following basic fact.
\begin{lem}\label{lem-twist-jacuet-module}
Let $(\pi,V)$ be a smooth representation of $G=\GL_m(F)$.  Let $	P = MN$ be a (proper) parabolic subgroup of $G$ with Levi factor $M$ and unipotent radical $N$.  Let $s \in \C$.
Then the Jacquet module $(\nu^s \otimes \pi)_N$ is equivalent to the twisted Jacquet module $\nu^s\vert_{M}\otimes \pi_N$.
\end{lem}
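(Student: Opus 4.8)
The plan is to observe that $\nu^s$ is a smooth character of $G$ that is trivial on every unipotent subgroup, so that twisting $\pi$ by it does not move the subspace $V(N)$ at all, and then to bookkeep the resulting effect on the $M$-action and on the normalization factor.

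First I would recall that $\nu^s = |\det(\cdot)|_F^s$ is a smooth one-dimensional representation of $G$, and since $\det$ is trivial on the unipotent radical $N$, the restriction $\nu^s\vert_N$ is trivial. Consequently, for every $n\in N$ and $v\in V$ we have $(\nu^s\otimes\pi)(n)v - v = \nu^s(n)\,\pi(n)v - v = \pi(n)v - v$, so that the $P$-stable subspace $V(N) = \spn\{\pi(n)v - v : n\in N,\ v\in V\}$ is literally the same whether it is computed using $\pi$ or using $\nu^s\otimes\pi$. Hence the underlying vector spaces of the two un-normalized Jacquet modules coincide, and the identity map on $V$ descends to a well-defined linear isomorphism $V/V(N) \to V/V(N)$.

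Next I would compare the $M$-actions on the common quotient $V/V(N)$. The un-normalized action of $m\in M$ induced by $\nu^s\otimes\pi$ equals $\nu^s(m)$ times the action induced by $\pi$. The normalization twist is by $\delta_P^{-1/2}$ in both cases, since $\delta_P$ depends only on $P$ and not on the representation being restricted; so after normalizing, the action of $m$ on $(\nu^s\otimes\pi)_N$ is $\nu^s(m)$ times its action on $\pi_N$. This is exactly the assertion that $(\nu^s\otimes\pi)_N \cong \nu^s\vert_M \otimes \pi_N$ as representations of $M$, the intertwiner being the map induced by the identity on $V$.

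I do not expect any genuine obstacle here; the statement is essentially formal. The only point meriting a sentence of care is checking that the map induced by $\mathrm{id}_V$ is both well defined on the quotient and $M$-equivariant for the two twisted actions, and this is immediate once one has observed that the two candidate copies of $V(N)$ are the same subspace.
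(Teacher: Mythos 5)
Your argument is correct and is essentially identical to the paper's proof: both rest on the observation that $\nu^s$ is trivial on $N$, so the subspace $V(N)$ (and hence the Jacquet quotient) is unchanged, after which the normalized $M$-action is visibly twisted by $\nu^s\vert_M$. No issues.
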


\begin{proof}
The lemma follows immediately from the fact that $\nu$ is trivial on the unipotent group $N$.
Indeed, the space of both representations $\pi$ and $\nu^s\otimes \pi = \nu^s\pi$ is $V$.  The space of the Jacquet module of $\pi$, respectively $\nu^s\pi$, is the quotient of $V$ by the subspace
$V(N) = \spn\{ v - \pi(n)v : v\in V, n \in N \}$, respectively $\spn \{ v - \nu^s(n)\pi(n) v : v \in V, n \in N\}$.  Since $\nu^s(n) = 1$ for every $n\in N$, we see that the space of both $\pi_N$ and $(\nu^s\pi)_N$ is $V_N = V / V(N)$.
Finally, observe that for any $m\in M$ and $v + V(N) \in V_N$ we have
\begin{align*}
(\nu^s\pi)_N(m)(v + V(N)) & = \delta_P^{-1/2}(m)\nu^s(m)\pi(m) v + V(N) \\
 &= \nu^s(m)	\left(\delta_P^{-1/2}(m)\pi(m) v + V(N)\right) \\
 &=  \nu^s(m) \pi_N(m) (v + V(N));
\end{align*}
therefore $(\nu^s \pi)_N = \nu^s\vert_M \otimes \pi_N$, as claimed.
\end{proof}

In order to understand the exponents of $\left({}^{ww+} [\delta]\right)_{N_\Theta \cap {}^wM_\Omega}$, we require the following proposition.

\begin{prop}\label{prop-exponents-of-tensor-product}
Let $G$ and $G'$ be two connected reductive groups over $F$.
Let $(\pi,V)$, respectively $(\sigma, W)$, be a finitely generated admissible representation of $G$, respectively $G'$.
The set of exponents of the (external) tensor product $\pi\otimes \sigma$ consists of all pairwise products $\chi \otimes \chi'$, where $\chi \in \Exp_{Z_G}(\pi)$ and $\chi ' \in \Exp_{Z_{G'}}(\sigma)$ are exponents of $\pi$ and $\sigma$ respectively.  That is,
\begin{align}\label{eq-exponents-of-tensor-product}
\Exp_{Z_G \times Z_{G'}}(\pi \otimes \sigma) 
& = \{ \chi \otimes \chi' : \chi \in \Exp_{Z_G}(\pi), \chi ' \in \Exp_{Z_{G'}}(\sigma) \}\\
\nonumber & \cong \Exp_{Z_G}(\pi) \times \Exp_{Z_{G'}}(\sigma).
\end{align}
\end{prop}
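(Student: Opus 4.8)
The plan is to deduce the statement from Casselman's characterization of exponents in terms of generalized eigenspaces \cite[Proposition 2.1.9]{Casselman-book}. First I would check that $\pi\otimes\sigma$ is itself a finitely generated admissible representation of $G\times G'$, so that $\Exp_{Z_G\times Z_{G'}}(\pi\otimes\sigma)$ is defined: if $V$ is generated over $G$ by $v_1,\dots,v_p$ and $W$ is generated over $G'$ by $w_1,\dots,w_q$, then the $(G\times G')$-submodule of $V\otimes W$ generated by the $v_i\otimes w_j$ contains every $\pi(g)v_i\otimes\sigma(g')w_j$ and hence all of $V\otimes W$; and since every compact open subgroup of $G\times G'$ contains one of the form $K\times K'$ with $K,K'$ compact open, we have $(V\otimes W)^{K\times K'}=V^K\otimes W^{K'}$, which is finite dimensional. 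Note also that $Z_{G\times G'}=Z_G\times Z_{G'}$, and that every quasi-character $\eta$ of $Z_G\times Z_{G'}$ is uniquely of the form $\chi\otimes\chi'$ with $\chi=\eta|_{Z_G\times\{1\}}$ and $\chi'=\eta|_{\{1\}\times Z_{G'}}$, distinct pairs giving distinct $\eta$.

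Since $\pi$ and $\sigma$ are finitely generated and admissible, the centres act locally finitely and \cite[Proposition 2.1.9]{Casselman-book} yields the finite direct sum decompositions into generalized eigenspaces
\[
V=\bigoplus_{\chi\in\Exp_{Z_G}(\pi)}V_\chi,\qquad W=\bigoplus_{\chi'\in\Exp_{Z_{G'}}(\sigma)}W_{\chi'},
\]
hence $V\otimes W=\bigoplus_{\chi,\chi'}V_\chi\otimes W_{\chi'}$. I claim each summand $V_\chi\otimes W_{\chi'}$ lies in the generalized $(\chi\otimes\chi')$-eigenspace for the action of $Z_G\times Z_{G'}$ on $V\otimes W$. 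Granting this, since the pairs $(\chi,\chi')$ are pairwise distinct as characters of $Z_G\times Z_{G'}$, the displayed decomposition of $V\otimes W$ is forced to coincide with its decomposition into generalized eigenspaces; in particular the generalized $(\chi\otimes\chi')$-eigenspace equals $V_\chi\otimes W_{\chi'}$, which is nonzero precisely when both $V_\chi\neq 0$ and $W_{\chi'}\neq 0$. By \cite[Proposition 2.1.9]{Casselman-book} this says exactly that $\chi\otimes\chi'\in\Exp_{Z_G\times Z_{G'}}(\pi\otimes\sigma)$ if and only if $\chi\in\Exp_{Z_G}(\pi)$ and $\chi'\in\Exp_{Z_{G'}}(\sigma)$, which is \eqref{eq-exponents-of-tensor-product}.

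It remains to verify the claim, the one computational point. Fix $(z,z')\in Z_G\times Z_{G'}$ and put $A=\pi(z)-\chi(z)$, $B=\sigma(z')-\chi'(z')$. On $V\otimes W$ one has the identity
\[
(\pi\otimes\sigma)(z,z')-\chi(z)\chi'(z')=\chi(z)\,(I\otimes B)+\chi'(z')\,(A\otimes I)+(A\otimes I)(I\otimes B),
\]
a polynomial in the two commuting operators $X=A\otimes I$ and $Y=I\otimes B$. Given $v\in V_\chi$ and $w\in W_{\chi'}$, choose $n,m$ with $A^{n}v=0$ and $B^{m}w=0$, so that $X^{n}(v\otimes w)=0$ and $Y^{m}(v\otimes w)=0$. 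Expanding the $N$-th power of the right-hand side gives a sum of scalar multiples of operators $X^{b+c}Y^{a+c}$ with $a+b+c=N$; if $b+c\le n-1$ and $a+c\le m-1$ then $N=a+b+c\le n+m-2-c\le n+m-2$, so for $N\ge n+m-1$ every such operator kills $v\otimes w$, whence $v\otimes w$ lies in the generalized $(\chi\otimes\chi')$-eigenspace. The only subtlety is that $V_\chi$ and $W_{\chi'}$ need not be finite dimensional, so $A$ and $B$ are merely locally nilpotent, which is why the nilpotency estimate is run vector by vector; together with the routine check that $\pi\otimes\sigma$ is finitely generated admissible, this is all that is needed, and I do not expect any genuine obstacle.
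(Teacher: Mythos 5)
Your argument is correct, but it takes a different route from the paper. The paper's proof stays at the level of composition series: it uses the characterization of $\Exp_{Z_G\times Z_{G'}}(\pi\otimes\sigma)$ as the set of central characters of irreducible subquotients and then reduces everything to the assertion (stated there without proof, as an ``elementary fact'') that every irreducible subquotient of the external tensor product $V\otimes W$ has the form $V^j\otimes W^k$ with $V^j$, $W^k$ irreducible subquotients of $V$, $W$. You instead work with the other characterization of exponents from \cite[Proposition 2.1.9]{Casselman-book}, namely nonvanishing of generalized eigenspaces: you check that $\pi\otimes\sigma$ is finitely generated and admissible over $G\times G'$, decompose $V=\bigoplus_\chi V_\chi$ and $W=\bigoplus_{\chi'}W_{\chi'}$, and show by the commuting-operator identity
\begin{align*}
(\pi\otimes\sigma)(z,z')-\chi(z)\chi'(z') &= \chi(z)\,(I\otimes B)+\chi'(z')\,(A\otimes I)+(A\otimes I)(I\otimes B)
\end{align*}
together with the local nilpotency of $A=\pi(z)-\chi(z)$ on $V_\chi$ and $B=\sigma(z')-\chi'(z')$ on $W_{\chi'}$ that $V_\chi\otimes W_{\chi'}$ sits inside the generalized $(\chi\otimes\chi')$-eigenspace; the independence of generalized eigenspaces for distinct quasi-characters then forces the two decompositions of $V\otimes W$ to agree. (One small point of care: to match the paper's definition of the generalized eigenspace you should choose the nilpotency degrees $n$ for $v$ and $m$ for $w$ uniformly in $z$ and $z'$ --- which the definition of $V_\chi$ and $W_{\chi'}$ permits --- so that the resulting exponent $N\geq n+m-1$ is independent of $(z,z')$; as phrased you fix $(z,z')$ first, but this is cosmetic.) What each approach buys: the paper's argument is shorter but leans on the structure theory of irreducible admissible representations of a product group, which it does not reprove; yours is self-contained modulo \cite[Proposition 2.1.9]{Casselman-book}, at the cost of the explicit eigenspace computation and of verifying finite generation and admissibility of the external tensor product, both of which you do correctly.
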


\begin{proof}
The exponents $\Exp_{Z_G \times Z_{G'}}(\pi \otimes \sigma)$ of $\pi \otimes \sigma$ are precisely the central characters of the irreducible subquotients of $\pi \otimes \sigma$ (\textit{cf.}~\cite[Proposition 2.1.9]{Casselman-book}, \cite[Lemma 4.14]{smith2018}).
To prove the proposition, it is sufficient to show that the irreducible subquotients of $\pi \otimes \sigma$ are of the form $V^j \otimes W^k$, where $V^j$, respectively $W^k$, is an irreducible subquotient of $(\pi,V)$, respectively $(\sigma,W)$.  Indeed, if $V^j$ (resp.~$W^k$) is irreducible, then it admits a central character $\chi_j$ (resp.~$\chi_k$); moreover, $V^j \otimes W^k$ has central character $\chi_j \otimes \chi_k : Z_G \times Z_{G'} \rightarrow \C^\times$.
We omit the proof of the elementary fact regarding the subquotients of the external tensor product $(\pi\otimes \sigma, V\otimes W)$.
\end{proof}

\begin{note}
To clarify the following calculations we introduce some additional notation for certain subsets of $\Delta$.
For and $1 \leq j \leq 2n-1$, let $\Xi_j = \Delta\setminus\{\epsilon_{j}-\epsilon_{j+1}\}$.
We will be particularly interested in $\Xi_{2k}$ and $\Xi_n=\Omega^{\operatorname{ell}}$ since $\Theta = w_+ \Xi_{2k}$ and $\Omega = w_+ \Xi_n$.
\end{note}

Recall that the $(\theta,F)$-split component $S_\Theta$ of $M_\Theta$ is equal to the $F$-split component $A_\Theta$.  In particular, the $(\theta,F)$-split component of $G$ is $S_G = A_G$. We now consider the exponents of $\left({}^{ww+} [\delta]\right)_{N_\Theta \cap {}^wM_\Omega} = {}^{ww_+}\left( \nu^{1/2}\delta_{N_1} \otimes \nu^{-1/2}\delta_{N_2} \right)$ restricted to $S_\Theta^- \setminus S_\Theta^1S_G = A_\Theta^- \setminus A_\Theta^1A_G$.
Let $s\in S_\Theta = A_\Theta$.  Since $A_\Theta = w_+ A_{(2k, 2n-2k)} w_+^{-1}$, we can write $s = w_+ a w_+^{-1}$, where $a = \diag(a_1 1_{2k}, a_2 1_{2n-2k})$ lies in $A_{(2k, 2n-2k)}^- \setminus A_{(2k, 2n-2k)}^1A_G$.
In particular, $A_{(2k, 2n-2k)} = A_{\Xi_{2k}}$ and $a$ has the property that $|\epsilon_{2k}-\epsilon_{2k+1}(a)| = |a_1 a_2^{-1}| < 1$.
By \Cref{prop-exponents-of-tensor-product} and \Cref{lem-twist-jacuet-module}, the exponents of $\left({}^{ww+} [\delta]\right)_{N_\Theta \cap {}^wM_\Omega} = {}^{ww_+}\left( \nu^{1/2}\delta_{N_1} \otimes \nu^{-1/2}\delta_{N_2} \right)$ are all of the form ${}^{ww_+}\left(\nu^{1/2}\chi_1 \otimes \nu^{-1/2}\chi_2 \right)$, where $\chi_1 \in \Exp_{A_1}(\delta_{N_1})$, and $\chi_2 \in \Exp_{A_2}(\delta_{N_2})$.
Here we write $A_i$ for the $F$-split component of $M_i \subset P_i \subset \GL_n(F)$, $i=1,2$.
In particular,
\begin{align*}
	{}^{ww_+}\left(\nu^{1/2}\chi_1 \otimes \nu^{-1/2}\chi_2 \right) (s)
	& = {}^{ww_+}\left(\nu^{1/2}\chi_1 \otimes \nu^{-1/2}\chi_2 \right) (w_+ a w_+^{-1}) \\
	& = \left(\nu^{1/2}\chi_1 \otimes \nu^{-1/2}\chi_2 \right) (w_+^{-1}w^{-1}w_+ a w_+^{-1}ww_+) \\
	& = \left(\nu^{1/2}\chi_1 \otimes \nu^{-1/2}\chi_2 \right) ({w'}^{-1} a w'),
\end{align*}
where $w' = w_+^{-1} w w_+ \in [W_{(2k,2n-2k)} \backslash W_0 / W_{(n,n)}]$ and 
\begin{align*}
	{w'}^{-1} a w' & \in {w'}^{-1} A_{(2k,2n-2k)}^- {w'} \setminus {w'}^{-1} A_{(2k,2n-2k)}^1 {w'}A_G \\
	& \subset A_{{w'}^{-1}M_{(2k,2n-2k)}w' \cap M_{(n,n)}}^- \setminus A_{{w'}^{-1}M_{(2k,2n-2k)}w' \cap M_{(n,n)}}^1 A_{(n,n)}\\
	& = A_{({w'}^{-1}\Xi_{2k}) \cap  \Xi_{n}}^- \setminus A_{({w'}^{-1}\Xi_{2k}) \cap  \Xi_{n}}^1 A_{(n,n)} \\
	&= A_{M_1\times M_2}^- \setminus A_{M_1\times M_2}^1A_{(n,n)}\\
	& = A_1^- \times A_2^-\setminus (A_1^1 \times A_2^1) A_{(n,n)}.
\end{align*}
Where the containment in the second line follows as in the proof of \cite[Lemma 8.4]{smith2018}.
By assumption, $\delta$ is a discrete series representation of $\GL_n(F)$; therefore, the exponents $\chi_1$ and $\chi_2$ of $\delta$ satisfy Casselman's Criterion (\cite[Theorem 6.5.1]{Casselman-book}) and
\begin{align*}
|\chi_1\otimes \chi_2 (	{w'}^{-1} a w') | & < 1.
\end{align*}
To ensure that the exponents ${}^{ww_+}\left(\nu^{1/2}\chi_1 \otimes \nu^{-1/2}\chi_2 \right)$ of $\left({}^{ww+} [\delta]\right)_{N_\Theta \cap {}^wM_\Omega} = {}^{ww_+}\left( \nu^{1/2}\delta_{N_1} \otimes \nu^{-1/2}\delta_{N_2} \right)$ satisfy the Relative Casselman's Criterion (\cite[Theorem 4.7]{kato--takano2010}), we need to ensure that
\begin{align}\label{eq-unramified-relative-cass}
|\nu^{1/2}\otimes \nu^{-1/2} (	{w'}^{-1} a w') | & \leq 1.
\end{align}
We can realize the restriction of the unramified character $\nu^{1/2} \otimes \nu^{-1/2}$  to the maximal (diagonal) $F$-split torus $A_0$ as the composition of $|\cdot |_F^{1/2}$ with the sum over all roots in $\Delta$ with positive integral coefficients, that is, 
\begin{align}\label{eq-unramified-char-roots-2}
	(\nu^{1/2} \otimes \nu^{-1/2}) \vert_{A_0} & = |\cdot|_F^{1/2} \circ \left( \sum_{\alpha \in \Delta} c_\alpha \cdot \alpha \right),
\end{align}
where $c_{\epsilon_{i}-\epsilon_{i+1}} = i$, for $1\leq i \leq n$, and $c_{\epsilon_{n+j}-\epsilon_{n+j+1}} = n-j$, for $1\leq j \leq n-1$. 
To compute $(\nu^{1/2} \otimes \nu^{-1/2})(w'^{-1}aw')$ it is helpful to partition $\Delta$ as the disjoint union of $({w'}^{-1}\Xi_{2k}) \cap  \Xi_{n}$ and $\Delta\setminus (({w'}^{-1}\Xi_{2k}) \cap  \Xi_{n})$.
Indeed, 
since $A_{w'^{-1}\Xi_{2k}} \subset A_{(w'^{-1}\Xi_{2k})\cap\Xi_n}$, it follows that 
$\alpha(w'^{-1}aw')=1$, for all $\alpha \in ({w'}^{-1}\Xi_{2k}) \cap  \Xi_{n}$.
On the other hand, 
since $w'^{-1}aw' \in A_{(w'^{-1}\Xi_{2k})\cap\Xi_n}^-$ we have that $|\beta(w'^{-1}aw')|_F\leq 1$, for all $\beta \in \Delta\setminus (({w'}^{-1}\Xi_{2k}) \cap  \Xi_{n})$.
From \eqref{eq-unramified-char-roots-2}, it follows that
\begin{align*}
	& (\nu^{1/2} \otimes \nu^{-1/2}) (w'^{-1}aw')  = \prod_{\alpha \in \Delta} |\alpha(w'^{-1}aw')|^{c_\alpha/2}_F \\
	& = \left(\prod_{\alpha \in ({w'}^{-1}\Xi_{2k}) \cap  \Xi_{n}} |\alpha(w'^{-1}aw')|^{c_\alpha/2}_F\right) \left(  \prod_{\beta \in \Delta\setminus (({w'}^{-1}\Xi_{2k}) \cap  \Xi_{n})} |\beta(w'^{-1}aw')|^{c_\beta/2}_F\right) \\
	& = \prod_{\beta \in \Delta\setminus (({w'}^{-1}\Xi_{2k}) \cap  \Xi_{n})} |\beta(w'^{-1}aw')|^{c_\beta/2}_F\\
	& \leq 1,
\end{align*}
which establishes the truth of \eqref{eq-unramified-relative-cass}.
Moreover, we now have that
\begin{align*}
\left\vert{}^{ww_+}\left(\nu^{1/2}\chi_1 \otimes \nu^{-1/2}\chi_2 \right)(s)\right\vert
& = \left(\nu^{1/2}\chi_1 \otimes \nu^{-1/2}\chi_2 \right) ({w'}^{-1} a w')\\
& = |\chi_1\otimes \chi_2 (	{w'}^{-1} a w') ||\nu^{1/2}\otimes \nu^{-1/2} (	{w'}^{-1} a w') | \\
& < 1,
\end{align*}
for all $\chi_1 \in \Exp_{A_1}(\delta_{N_1})$, $\chi_2 \in \Exp_{A_2}(\delta_{N_2})$, and $s = w_+aw_+^{-1} \in S_\Theta\setminus S_\Theta^1S_G$, where $w' = w_+^{-1}ww_+$ as above.
Finally, we have established the desired result:
\begin{prop}\label{prop-good-exp}
	Let $\Theta = \Theta_k$, $1\leq k \leq n-1$ be a maximal $\theta$-split subset of $\Delta_0$.
	Let $w\in [W_\Theta \backslash W_0 / W_\Omega]$ be such that $P_\Theta \cap {}^w M_\Omega$ is a proper parabolic subgroup of ${}^wM_\Omega$.
	Let $\delta$ be an irreducible admissible square integrable representation of $\GL_n(F)$.
	The exponents of $\pi_N$, and $\style{U}(\delta,2)_N$, corresponding to the irreducible subquotients of $\mathcal{F}_N^{ww_+}(\delta,2) = \left({}^{ww+} [\delta]\right)_{N_\Theta \cap {}^wM_\Omega}$ satisfy the condition \eqref{eq-rel-cass-condition} of \Cref{thm-relative-casselman}.
\end{prop}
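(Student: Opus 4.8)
The plan is to read the exponents of $\mathcal{F}_N^{ww_+}(\delta,2)$ off the tensor-product structure exhibited in this subsection, and then verify the inequality \eqref{eq-rel-cass-condition} of \Cref{thm-relative-casselman} directly on $S_\Theta^-\setminus S_\Theta^1 S_G$, recalling that $S_\Theta = A_\Theta$ for $\Theta = \Theta_k$.

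First I would use the identification obtained above, namely $\mathcal{F}_N^{ww_+}(\delta,2) = \left({}^{ww_+}[\delta]\right)_{N_\Theta\cap{}^wM_\Omega} = {}^{ww_+}\bigl(\nu^{1/2}\delta_{N_1}\otimes\nu^{-1/2}\delta_{N_2}\bigr)$, where $P_1\times P_2$ (with Levi $M_1\times M_2$ and unipotent radical $N_1\times N_2$) is the parabolic subgroup of $M_{(n,n)}\cong\GL_n(F)\times\GL_n(F)$ attached to $\Theta\cap w\Omega$, and at least one of $P_1,P_2$ is proper because we are in Case~2 (this properness of $M_\Theta\cap{}^wP_\Omega$ was established at the start of the subsection by a maximality argument on Levi subgroups). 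Then \Cref{lem-twist-jacuet-module} moves the factors $\nu^{\pm1/2}$ past the Jacquet functors, and \Cref{prop-exponents-of-tensor-product} identifies the exponents of this representation with the products ${}^{ww_+}\bigl(\nu^{1/2}\chi_1\otimes\nu^{-1/2}\chi_2\bigr)$ for $\chi_i\in\Exp_{A_i}(\delta_{N_i})$; by \cite[Lemma 4.16]{smith2018} the exponents along $P_\Theta$ are the restrictions of these to $A_\Theta$.

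Next I would evaluate such an exponent on an arbitrary $s = w_+aw_+^{-1}\in S_\Theta^-\setminus S_\Theta^1 S_G$, writing $a = \diag(a_1 1_{2k}, a_2 1_{2n-2k})\in A_{(2k,2n-2k)}^-\setminus A_{(2k,2n-2k)}^1 A_G$ and $w' = w_+^{-1}ww_+ \in [W_{(2k,2n-2k)}\backslash W_0/W_{(n,n)}]$. The $ww_+$-twist turns the evaluation into $\bigl(\nu^{1/2}\chi_1\otimes\nu^{-1/2}\chi_2\bigr)(w'^{-1}aw')$, and the key structural input---proved exactly as in \cite[Lemma 8.4]{smith2018}---is that $w'^{-1}aw'$ lands in $A_{(w'^{-1}\Xi_{2k})\cap\Xi_n}^-\setminus A_{(w'^{-1}\Xi_{2k})\cap\Xi_n}^1 A_{(n,n)} = A_1^-\times A_2^-\setminus (A_1^1\times A_2^1)A_{(n,n)}$, i.e.\ it is dominant for $P_1\times P_2$. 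I then factor the absolute value as $|\chi_1\otimes\chi_2(w'^{-1}aw')|\cdot|\nu^{1/2}\otimes\nu^{-1/2}(w'^{-1}aw')|$. Because $\delta$ is square integrable, Casselman's Criterion \cite[Theorem 6.5.1]{Casselman-book} makes the first factor strictly less than $1$, and for the second factor I would write $(\nu^{1/2}\otimes\nu^{-1/2})|_{A_0} = |\cdot|_F^{1/2}\circ\bigl(\sum_{\alpha\in\Delta}c_\alpha\alpha\bigr)$ with all $c_\alpha\geq 0$, observe that $\alpha(w'^{-1}aw')=1$ for $\alpha\in(w'^{-1}\Xi_{2k})\cap\Xi_n$ while $|\beta(w'^{-1}aw')|_F\leq 1$ for the remaining simple roots by dominance, and conclude that this factor is $\leq 1$. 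The product is therefore strictly less than $1$.

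This gives $|\chi(s)| < 1$ for every exponent $\chi$ of $\mathcal{F}_N^{ww_+}(\delta,2)$ and every $s\in S_\Theta^-\setminus S_\Theta^1 S_G$, which is \eqref{eq-rel-cass-condition}; the exactness of \eqref{eq-jacquet-exact} then transfers this to the subquotients of $\style{U}(\delta,2)_N$ coming from this piece of $\pi_N$. I expect the main obstacle to be the verification that $w'^{-1}aw'$ is dominant for $P_1\times P_2$ (the inclusion into $A_{(w'^{-1}\Xi_{2k})\cap\Xi_n}^-$), which forces one to keep careful track of the minimal-length double-coset representative $w'$ and the $w_+$-normalization; once the direction of each inequality is fixed, the genuinely delicate point is only that the unramified twist $\nu^{1/2}\otimes\nu^{-1/2}$ contributes a factor $\leq 1$ rather than one that could exceed $1$, which is exactly what the nonnegativity of the $c_\alpha$ together with the dominance of $w'^{-1}aw'$ guarantees.
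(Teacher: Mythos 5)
Your proposal is correct and follows essentially the same route as the paper: the identification $\mathcal{F}_N^{ww_+}(\delta,2)\cong{}^{ww_+}\bigl(\nu^{1/2}\delta_{N_1}\otimes\nu^{-1/2}\delta_{N_2}\bigr)$, the use of \Cref{lem-twist-jacuet-module}, \Cref{prop-exponents-of-tensor-product} and \cite[Lemma 4.16]{smith2018} to read off the exponents, the dominance of $w'^{-1}aw'$ as in \cite[Lemma 8.4]{smith2018}, Casselman's criterion for $\delta$ giving the strict inequality, and the root-coefficient expression of $\nu^{1/2}\otimes\nu^{-1/2}$ giving the factor $\leq 1$. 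No substantive differences from the paper's argument.
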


\bibliographystyle{amsalpha}

\bibliography{jerrod-refs}
%

\end{document}